\renewcommand{\P}{\mathbb{P}}  
\newcommand{\ZZ}{\mathbb{Z}}
\newcommand{\given}{ \mid  }
\newcommand{\pr}[1]{\P \event{ #1 }}
\newcommand{\prs}[2]{\P_{#1} \event{ #2 }}
\DeclarePairedDelimiter\abs{\lvert}{\rvert}
\DeclarePairedDelimiter\event{ \{ }{ \} }
\DeclarePairedDelimiter\set{ \{ }{ \} }
\newcommand{\be}{\begin{enumerate}}
\newcommand{\ee}{\end{enumerate}}
\renewcommand{\equiv}{\coloneqq}
\newtheorem{Lem}{Lemma}
\newtheorem{Thm}{Theorem}
\newtheorem{Pro}{Proposition}
\newtheorem{Cor}{Corollary}
\newtheorem{Exp}{Example}
\definecolor{mypurple}{rgb}{.3,0,.5}
\begin{document}
\title{Yaglom limits for $R$-transient chains with non-trivial Martin boundary}
\author{R. D. Foley \and D. R. McDonald}

\maketitle

\begin{abstract}
We give conditions for the existence of a Yaglom limit for $R$-transient Markov chains with non-trivial
$\rho$-Martin entrance boundary
and we characterize the $\rho$-invariant limiting quasistationary distribution ($\rho=1/R$).
\end{abstract}

Keywords:  quasi-stationary, $R$-transient, $\rho$-Martin boundary, Yaglom limit,  time reversal, Doob's $h$-transformation, change of measure, space-time Martin boundary.

\footnote{The first author's research was supported in part by NSF Grant CMMI-0856489.  The second author's research was supported in part by NSERC Grant A4551.}

\section{Introduction}

Let $K$ be a  substochastic matrix with elements $K(x,y)$ where $x$
and $y$ are elements of a countably infinite state space $S$. We assume that there is at least one $x \in S$ with $K(x,S) < 1$.
We think of $K$ as the part of the transition matrix of a Markov chain of $X = \set{X_0, X_1,
	\dots}$ that describes the evolution of $X$ among the states in $S$.
Since we will not be interested in $X$
after exiting $S$, we can simply append an additional state $\delta$ that is
absorbing.  The probability $1 - K(x,S)$ can be
thought of as the probability of jumping from $x$ to the absorbing state $\delta$.

Our primary interest is whether the following sequence of conditional
distributions converges to a proper probability distribution; that is,
whether
\begin{alignat}{3}
	\frac{K^n(x,y)}{K^n(x,S)} &= \pr{X_n = y \given X_n \in S,
X_0 = x}
	&\to \pi_x(y) \tag{YL}
\end{alignat}
as $n \to \infty$ where $x$ and $y$ are in $S$ and $\pi_x$ is a proper probability distribution over $S$.
When (YL) holds, we will say that we have a
\emph{Yaglom limit}, which is named after the author of \cite{Yaglom1947}.

Although, we have formulated this as a discrete time Markov chain $X$, it is quite
common to formulate this as a continuous time Markov chain.
Frequently, we cite results without mentioning whether the results come from a
discrete time or continuous time formulation since it is usually
straightforward to translate continuous time results into the analogous
discrete time result; see Section~3.4 \cite{vanDoornPollettSurvey2013}.

The
review paper \cite{vanDoornPollettSurvey2013} cites applications of these ideas in the areas of
cellular automata, complex systems,
ecology, epidemics, immunology,
medical decision making, physical chemistry,
queues, reliability,
survival analysis, and telecommunications.

To simplify the problem, we will assume that $X_0 \in S$ and that $K$ is irreducible and
aperiodic.  By
\emph{irreducible}, we mean that for any pair of states $x$ and $y$ in $S$, there
exists an $n$ so that $K^n(x,y) > 0$.
By \emph{aperiodic}, we mean that the greatest common divisor of $\set{n > 0 :
	K^n(x,y) > 0}$ is 1 for some pair of states $x$ and $y$, which implies
that it holds for all pairs of states.  The random walk Example \ref{McDF} would be
aperiodic if $r > 0$.

 The potential of $K$ is the generating function
\begin{alignat}{2}
G_{x,y}(z) \equiv \sum_{n \geq 0} K^n(x,y) z^n \label{eqn:GenFun}
\end{alignat}
and $\rho = 1/R$ where $R\geq 1$ is the common radius of convergence of the potential; i.e. independent of $x,y$.
 Let $\zeta = \inf\set{n : X_n \not\in S}$ denote the exit time from $S$, which is
also called the time of absorption.  Clearly,  $K^n(x,S) =
\prs{x}{\zeta >
	n}$ where the subscript $x$ denotes that we are also conditioning on
$X_0 = x$. As remarked on page 405 of \cite{Vere-Jones-Seneta}, if $K$ is strictly substochastic we may assume without loss of generality that absorption is certain; i.e. $P_x(\zeta<\infty)=1$ for all $x\in S$. If not just consider the processes conditioned on being absorbed which
has kernel $K(x,y)g(y)/g(x)$ where $g(x)=P_x(\zeta<\infty)$ is a superharmonic function.

In \cite{McFoley} there is a example of an $R$-transient chain on a countable state space
which has Yaglom limits which depend on the initial state; i.e.
$\lim_{n\to\infty}\frac{K^n(x,y)}{K^N(x,S)}=\pi_x(y)$ where $\pi_x$ belongs to a family of
($\rho$-invariant) probability measures.
The goal of this paper is to generalize this example. This is a somewhat daunting task since
 the remarkable paper by Kesten \cite{Kesten} gives a counterexample which is  similar
 to the example in \cite{McFoley}.
 Kesten's paper \cite{Kesten} does give conditions for a Strong Ratio Limit Property (SRLP) and a Yaglom limit for $R$-transient Markov chains but
 only on $\{0,1,2,3,\ldots\}$ where the Martin boundary only contains one point;
 associated with a unique $\rho$-invariant measure. The proof of convergence depends heavily on this uniqueness.  Our results involve Yaglom limits for
 chains with non-trivial Martin boundary. We restrict to cases where the Jacka-Roberts condition
 holds (see Condition [5] below). This condition fails in Kesten's counterexample.

 The strong connection between Yaglom limits and the space-time entrance boundary was first
discussed in Breyer \cite{Breyer}. When we focus on $R$-transient nearest neighbour Markov chains
on the integers we can  obtain a description of the space-time
Martin entrance boundary which allows us to prove Yaglom limits when the Jacka-Roberts condition holds.

\section{Preliminaries, Conditions and Definitions}
\subsection{Consequences of uniform aperiodicity}
 We use the uniform aperiodicity condition introduced by Kesten \cite{Kesten}  as Condition (1.5)):
\begin{itemize}\label{uniper}
\item[Condition [1]] There exists constants $\delta_1>0$ and $N<\infty$, and for each $i\in S$, there exist integers
$1\leq k_1,\cdots ,k_r\leq N$ (with $k_j=k_j(i)$ and $r=r(i)$) such that $K^{k_s}(i,i)\geq \delta_1$ for $1\leq s\leq r$, and $\mbox{g.c.d.}(k_1,\ldots ,k_r)=1$.
\end{itemize}
As remarked by Kesten \cite{Kesten}, uniformly in $x$, there exists  some $k_0 < \infty$ and $\delta(k) > 0$
independent of $x$ such that $K^k(x, x) \geq \delta(k)$ for $ k \geq k_0$.

\begin{Lem}\label{follow}
Let $K$ be an  irreducible,  kernel $K$ on a countable state space $S$ with spectral radius $\rho$.
Assuming Condition [1]
\begin{eqnarray}
\lim_{n\to\infty}\frac{K^{n+1}(x,y)}{K^{n}(x,y)}&=&\rho,\label{Kesten-lim}
\end{eqnarray}

Again assuming Condition [1], if there exists a $\rho$-excessive probability measure $\mu$, then
\begin{eqnarray}
\lim_{n\to\infty}\frac{K^{n+1}(x,S)}{K^{n}(x,S)}&=&\rho, \label{us-inter-lim}
\end{eqnarray}
which implies that $\lim_{n\to\infty}K^n(x,S)^{1/n}= \rho$.
\end{Lem}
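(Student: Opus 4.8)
The first identity \eqref{Kesten-lim} is precisely the strong ratio limit property that Kesten establishes under his uniform aperiodicity hypothesis, which we have restated as Condition [1]. The plan here is simply to invoke that result: under Condition [1] the spectral radius $\rho$ is common to all entries and $K^{n+1}(x,y)/K^n(x,y)\to\rho$ for every pair $x,y\in S$. No new work is needed beyond citing \cite{Kesten} and checking that Condition [1] is exactly his condition (1.5).

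For the second identity \eqref{us-inter-lim} the natural strategy is to pass to the time-reversed kernel and transport the Kesten--Orey concentration argument from individual transition probabilities to the row sums $K^n(x,S)$. First I would use the $\rho$-excessive probability measure $\mu$ to define $\overleftarrow{K}(y,x)=R\mu(x)K(x,y)/\mu(y)$. A one-line induction gives $\overleftarrow{K}^m(y,x)=R^m\mu(x)K^m(x,y)/\mu(y)$, and summing over $x$ yields the bookkeeping identity $\mu\overleftarrow{K}^m(x)=R^m\mu(x)K^m(x,S)$. This converts the ratio we want into $R\,K^{n+1}(x,S)/K^n(x,S)=\mu\overleftarrow{K}^{n+1}(x)/\mu\overleftarrow{K}^n(x)$, so it suffices to show the right-hand side tends to $1$.

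Next I would record the structure of $\overleftarrow{K}$: since $\mu$ is a $\rho$-excessive probability measure, $\overleftarrow{K}$ is substochastic with spectral radius $1$ and inherits Condition [1], so $\overleftarrow{K}^d(x,x)\geq\delta(d)>0$ uniformly in $x$ once $d\geq k_0$. Fixing such a $d$ and writing $\delta=\delta(d)$, I would split $\overleftarrow{K}^d=\delta I+(1-\delta)\hat{Q}$ with $\hat{Q}=(\overleftarrow{K}^d-\delta I)/(1-\delta)$ substochastic, and expand $\overleftarrow{K}^{rd+s}$ by the binomial theorem. Setting $\rho(m;x)=\mu\hat{Q}^r\overleftarrow{K}^s(x)$ for $m=rd+s$ and $B(r,\ell)=\binom{r}{\ell}\delta^\ell(1-\delta)^{r-\ell}$ produces $\mu\overleftarrow{K}^{rd+s}(x)=\sum_\ell B(r,\ell)\,\rho((r-\ell)d+s;x)$ and its analogue with $r+1$ in place of $r$. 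Comparing the two sums term by term through the ratio $B(r+1,\ell+1)/B(r,\ell)=\delta(r+1)/(\ell+1)$, which is $\approx 1$ precisely where the binomial weights concentrate (near $\ell\approx\delta r$), forces the ratio of the central parts to $1$.

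The main obstacle is exactly this comparison: I must show that the binomial tails can be discarded uniformly and that the slowly decaying factor $\mu\overleftarrow{K}^m(x)=R^m\mu(x)K^m(x,S)$ does not let the discarded mass dominate. The subexponential decay needed here comes from $\liminf_m K^m(x,S)^{1/m}\geq\rho$ together with $\mu(x)^{1/m}\to 1$, which is what permits the Orey-style splitting into a central part and a large-deviation part; see \cite{Orey}. Having obtained $\mu\overleftarrow{K}^{m+d}(x)/\mu\overleftarrow{K}^m(x)\to 1$ for $d=k_0$ and for $d=k_0+1$, dividing the two relations yields convergence of the single-step ratio $\mu\overleftarrow{K}^{m+1}(x)/\mu\overleftarrow{K}^m(x)\to 1$, which is \eqref{us-inter-lim}. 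Finally, convergence of the ratio $K^{n+1}(x,S)/K^n(x,S)\to\rho$ implies convergence of the root $K^n(x,S)^{1/n}\to\rho$ by the elementary fact that for positive sequences the ratio test dominates the root test.
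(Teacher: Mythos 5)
Your proposal is correct, but it follows a different organization than the paper's published proof of Lemma \ref{follow}. For \eqref{Kesten-lim} both you and the authors simply cite Lemma 4 of \cite{Kesten}. For \eqref{us-inter-lim}, however, the authors never pass to the time reversal: they keep the forward kernel, write $K^k=(1-\delta(k))\hat{Q}+\delta(k)I$ with $\hat{Q}$ of spectral radius $\hat{\rho}=(\rho^k-\delta(k))/(1-\delta(k))$, and observe that Kesten's proof goes through verbatim with the target state $j$ replaced by $S$; the single new ingredient is Kesten's growth estimate (2.16) for $\rho(m,x,S)=\sum_y\hat{Q}^r(x,y)K^s(y,S)$, whose upper half is where the $\rho$-excessive probability enters, via $\hat{Q}^r(x,S)\leq\hat{\rho}^r/\mu(x)$, and whose lower half is $\rho(m,x,S)\geq\rho(m,x,x)$ together with Kesten's (2.17). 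You instead reverse time with respect to $\mu$, convert the row-sum ratio into $\mu\overleftarrow{K}^{n+1}(x)/\mu\overleftarrow{K}^{n}(x)$ through the identity $\mu\overleftarrow{K}^m(x)=R^m\mu(x)K^m(x,S)$, and rerun the entire Kesten--Orey binomial concentration argument for the reversed kernel, which has spectral radius $1$. That is legitimate, and it is in fact exactly how the authors prove the companion Lemma \ref{follow2}: your route costs more writing (the whole central-part/large-deviation splitting must be redone rather than cited), but it is self-contained and is precisely the template that extends to varying initial points $x_n$, which the paper needs later. Two details to tighten: the identity $\mu\overleftarrow{K}^m(x)=R^m\mu(x)K^m(x,S)$ comes from multiplying $\overleftarrow{K}^m(y,x)$ by $\mu(y)$ and summing over $y$, not from summing over $x$; and when you discard the binomial tails you should note explicitly that $\rho(m;x)=\mu\hat{Q}^{r}\overleftarrow{K}^{s}(x)\leq\mu(S)=1$, since this is the one place where finiteness of $\mu$ is indispensable --- it is exactly what fails in the killed asymmetric random walk counterexample the authors give right after the lemma.
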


Expression (\ref{Kesten-lim})  is from Lemma 4 in Kesten (1995)
\cite{Kesten} which  holds for both $R$-recurrent and $R$-transient chains.
Equation (\ref{us-inter-lim}) generalizes Lemma 3.1 in \cite{spanish} to include the $R$-transient case.
 In the $R$-recurrent case the existence of $\mu$ is automatic (see Corollary 2 in \cite{Vere-Jones-Seneta})
  so the assumption is that $\mu$ is a finite measure. In the $R$-transient case we can define a $\rho$-excessive
  measure $\mu(x)=G_{x_0,x}(R)$ for any initial point $x_0$ so again the assumption is that $\mu$ is a finite measure (see Condition [A] below).

 The existence of a finite $\rho$-excessive probability $\mu$ implies $$\lim_{n\to\infty}K^n(x,S)^{1/n}= \rho.$$ To prove this we first remark
$K^n(x,S)\geq K^n(x,x)$ and by Theorem A in \cite{Vere-Jones} $\lim_{n\to\infty}K^n(x,x)^{1/n}= \rho$.
Hence $\liminf_{n\to\infty}K^n(x,S)^{1/n}\geq \rho$.  To obtain the opposite inequality observe
$$K^n(x,S)=\rho^n\frac{\mu \overleftarrow{K}^n(x)}{\mu(x)}\leq \rho^n\frac{1}{\mu(x)}
\mbox{ where }\overleftarrow{K}(y,x)=\frac{\mu(x)K(x,y)}{\rho\mu(y)}.$$

Note that \eqref{us-inter-lim} is not always true.   For example,
let $Q$ be the (stochastic) transition matrix of a simple, asymmetric random walk on the
integers with $Q(i, i - 1) = a$, $Q(i, i + 1) = b$, $a+b=1$.  Let $0 < \alpha < 1$.  Then $\alpha Q$ is a
substochastic matrix where the simple, asymmetric random walk is killed with
probability $\alpha$ at each step, and $K = \alpha^2 Q^2$ is the two step
transition matrix.  The transition matrix $K$ restricted to the even integers is
aperiodic and satisfies Condition [1].  Now, $K^n(x,S) = \alpha^{2n}$, but
\begin{alignat*}{2}
	K^n(x,x) &= \alpha^{2n} \binom{2n}{n} a^n b^n \\
	&\sim \alpha^{2n} \frac{(4ab)^n}{\sqrt{\pi n}}.
\end{alignat*}
Thus, $K^n(x,S)^{1/n} \to \alpha^2$, but $K^n(x,x)^{1/n} \to \alpha^2 4ab$.  This does not contradict Lemma~\ref{follow} since $K$ does not
possess a summable, excessive probability measure $\mu$.

\begin{proof}[Proof of Lemma \ref{follow}]
It suffices to extend the powerful proof in Kesten (1995) \cite{Kesten} which as he points out is built on
  the proof of (5) in Theorem 1.1 in \cite{Kingman-Orey} or Theorem 2.1 in \cite{Orey}.
   (\ref{Kesten-lim})  is Lemma 4 in Kesten (1995) \cite{Kesten} which  holds for $R$-recurrent and  $R$-transient chains. The key idea in all these proofs is to represent $K^k=(1-\delta(k))\hat{Q}+\delta(k)I$
   where by Condition [1] $K^k(x,x)\geq 2\delta(k)$ for $k\geq k_0$ uniformly in $x$
   and where $\hat{Q}$ is a transition kernel with excessive measure $\mu$
   having (necessarily positive) spectral radius
   $\hat{\rho}=1/\hat{R}=\frac{\rho^k-\delta(k)}{1-\delta(k)}$.

The extension to (\ref{us-inter-lim}) requires we replace $j$ with $S$ in Kesten's proof. The only
additional requirement is to show (2.16) in \cite{Kesten} holds with $j$ replaced by $S$.
Now $\rho(m,x,S)=\sum_y(\hat{Q})^r(x,y)K^s(y,S)$ where $m=rk+s$, $0\leq s<k$.
Hence $\rho(m,x,S)\leq (\hat{Q})^r(x,S)$ and as above $(\hat{Q})^r(x,S))^{1/r}\to \hat{\rho}$
so $\limsup_{r\to\infty}\rho(rk+s,x,S)\leq \hat{\rho}$.
On the other hand $\rho(m,x,S)\geq \rho(m,x,x)$ and by (2.17) in \cite{Kesten}
$\lim_{r\to\infty}(\rho(rk+s,x,x))^{1/r}=\hat{\rho}$ so
$\liminf_{r\to\infty}(\rho(rk+s,x,S))^{1/r}\geq \hat{\rho}$. This gives (2.16) in \cite{Kesten}.

The rest of the proof follows unchanged and the last statement holds since the the ratio test
is a corollary of the root test.
\end{proof}

We will need the following extension.
\begin{Lem}\label{follow2}
Let $\mu$ be a $\theta$-invariant probability measure where $\theta\in [\rho,1)$.
Let $x_n$ be a sequence  such that $\lim_{n\to\infty}K^n(x_n,y)^{1/n}= \theta$ for some $y$ (and hence all $y$) and such that
and $\lim_{n\to\infty}\mu(x_n)^{1/n}= 1$ then for all $y\in S$, and integers $t$
\begin{eqnarray}
\lim_{n\to\infty}\frac{K^{n+t}(x_n,y)}{K^{n}(x_n,y)}=\theta^t&\mbox{ and }&\lim_{n\to\infty}\frac{K^{n+t}(x_n,S)}{K^{n}(x_n,S)}=\theta^t.\label{us-lim2}
\end{eqnarray}
Moreover $\lim_{n\to\infty}K^n(x_n,S)^{1/n}= \theta$.
\end{Lem}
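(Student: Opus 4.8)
The plan is to pass to the time-reversed kernel and reduce everything to a statement about a \emph{stochastic} chain, where Kesten's concentration estimate can be run with control that is uniform in the moving target $x_n$. Since $\mu$ is $\theta$-invariant and $K$ is irreducible, $\mu(y)>0$ for all $y$, so I may set $\overleftarrow{K}(y,x)=\mu(x)K(x,y)/(\theta\mu(y))$; $\theta$-invariance gives $\sum_x\overleftarrow{K}(y,x)=1$, and an easy induction yields the duality $K^n(x,y)=\theta^n\mu(y)\overleftarrow{K}^n(y,x)/\mu(x)$. Because $\theta<1$, Condition [1] transfers to $\overleftarrow{K}$ (indeed $\overleftarrow{K}^k(i,i)=K^k(i,i)/\theta^k\ge K^k(i,i)$), so Kesten's remark supplies a uniform $\delta(k)>0$ with $\overleftarrow{K}^k(x,x)\ge 2\delta(k)$ for $k\ge k_0$. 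Summing the duality over $y$ gives $K^n(x_n,S)=\theta^n(\mu\overleftarrow{K}^n)(x_n)/\mu(x_n)$, so the two ratios in (\ref{us-lim2}) become $\theta^t\,\overleftarrow{K}^{n+t}(y,x_n)/\overleftarrow{K}^n(y,x_n)$ and $\theta^t\,(\mu\overleftarrow{K}^{n+t})(x_n)/(\mu\overleftarrow{K}^n)(x_n)$. Thus it suffices to prove $\nu\overleftarrow{K}^{n+t}(x_n)/\nu\overleftarrow{K}^n(x_n)\to 1$ for $\nu=\delta_y$ and for $\nu=\mu$.

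Next I would dispose of the ``Moreover'' claim, which also feeds the subexponential estimates. Since $\overleftarrow{K}$ is stochastic, $(\mu\overleftarrow{K}^n)(x_n)=\sum_y\mu(y)\overleftarrow{K}^n(y,x_n)\le 1$, whence $K^n(x_n,S)\le\theta^n/\mu(x_n)$ and $\limsup_n K^n(x_n,S)^{1/n}\le\theta$ (using $\mu(x_n)^{1/n}\to 1$); the reverse inequality is immediate from $K^n(x_n,S)\ge K^n(x_n,y)$, giving $\lim_n K^n(x_n,S)^{1/n}=\theta$. Feeding this and the hypothesis $K^n(x_n,y)^{1/n}\to\theta$ back through the duality gives $(\mu\overleftarrow{K}^n)(x_n)^{1/n}\to 1$ and $\overleftarrow{K}^n(y,x_n)^{1/n}\to 1$; that is, the base quantity $\nu\overleftarrow{K}^n(x_n)$ decays subexponentially for both choices of $\nu$.

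The heart is Kesten's binomial split. Writing $\overleftarrow{K}^k=(1-\delta)\hat{Q}+\delta I$ with $\hat{Q}$ stochastic and commuting with $\overleftarrow{K}$, and $m=rk+s$, I obtain $\nu\overleftarrow{K}^m(x_n)=\sum_{\ell=0}^r B(r,\ell)g_\ell$ with $B(r,\ell)=\binom{r}{\ell}\delta^\ell(1-\delta)^{r-\ell}$ and $g_\ell=\nu\overleftarrow{K}^s\hat{Q}^{r-\ell}(x_n)\in[0,1]$, and the analogous expansion for $m\pm k$ (replacing $r$ by $r\pm 1$, same residue $s$). The crucial point — and the place where the moving target is tamed — is that since $\overleftarrow{K}$ and $\hat{Q}$ are stochastic, every $g_\ell\le 1$ \emph{regardless of $x_n$}. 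Splitting each sum into a central block $|\ell-\delta r|\le\varepsilon r$ and its complement, the complementary binomial mass is at most $e^{-c(\varepsilon)r}$, hence negligible against the subexponentially small base $\nu\overleftarrow{K}^m(x_n)$; on the central block the coefficient ratios $B(r+1,\ell+1)/B(r,\ell)=\delta(r+1)/(\ell+1)$ and $B(r-1,\ell-1)/B(r,\ell)=\ell/(\delta r)$ are $1+O(\varepsilon/\delta)$ uniformly. Letting $n\to\infty$ and then $\varepsilon\to 0$ yields $\nu\overleftarrow{K}^{m\pm k}(x_n)=(1+o(1))\nu\overleftarrow{K}^m(x_n)$ whenever the base at power $m$ decays subexponentially.

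Finally I would bootstrap and compose. The comparison above also gives $\nu\overleftarrow{K}^{m\pm k}(x_n)^{1/(m\pm k)}\to 1$, so the subexponential property propagates to the shifted powers; running the same split with block size $k+1$ (also $\ge k_0$) supplies the $\pm(k+1)$ comparisons. Since $\gcd(k,k+1)=1$, the shifts $\pm k,\pm(k+1)$ generate $\ZZ$, so for any fixed integer $t$ I reach power $n+t$ in finitely many steps, at each step the current base decaying subexponentially by the previous bootstrap; the telescoping product of the finitely many ratios (each $\to 1$) gives $\nu\overleftarrow{K}^{n+t}(x_n)/\nu\overleftarrow{K}^n(x_n)\to 1$, and hence (\ref{us-lim2}). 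The main obstacle is precisely this uniform-in-$x_n$ control in the concentration step: one cannot simply quote Kesten's fixed-state ratio limit (\ref{Kesten-lim}) and telescope, because the state moves with $n$; it is the stochasticity of $\overleftarrow{K}$ (the uniform bound $g_\ell\le 1$) together with the subexponential decay of the denominator that rescues the argument.
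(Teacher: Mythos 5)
Your proposal is correct and follows essentially the same route as the paper's own proof: time reversal with respect to $\mu$ to get a stochastic, uniformly aperiodic kernel $\overleftarrow{K}$, Kesten's binomial decomposition $\overleftarrow{K}^k=(1-\delta)\hat{Q}+\delta I$ with the uniform bound $g_\ell\le 1$ taming the moving target $x_n$, the central/large-deviation split of the binomial sum measured against the subexponentially decaying base, and composition of shift comparisons to reach an arbitrary integer $t$. The only cosmetic difference is in that last step: you generate all shifts from the two block sizes $k$ and $k+1$ using $\gcd(k,k+1)=1$ (which is precisely Kesten's Lemma 4 trick), whereas the paper runs the split for every block size $d\ge k_0$ to get all large shifts directly and then composes one large backward shift with one large forward shift to handle small $|t|$.
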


\begin{proof}
We  prove the first part of (\ref{us-lim2})
by combining elements
of  the proof of (5) in Theorem 1.1 in \cite{Kingman-Orey} or Theorem 2.1 in \cite{Orey}.
 Let $\overleftarrow{K}$ be the associated time reversed kernel; i.e. $\overleftarrow{K}(y,x)=\mu(x)K(x,y)/(\theta\mu(y))$. We remark that $\overleftarrow{K}$ has spectral radius
$1$ and is uniformly aperiodic so there exists a $k_0$ such that
$\overleftarrow{K}^d(x,x)>2\delta(d)>0$ for $d\geq k_0$ uniformly in $x$.

We prove the analogue of (2.14) in \cite{Kesten} by showing:
$$\frac{K^{n+t}(x_n,y)}{\theta^t K^{n}(x_n,y)}
=\frac{\overleftarrow{K}^{n+t}(y,x_n)}{\overleftarrow{K}^{n}(y,x_n)}\to 1.$$
 As in \cite{Kesten} take $d=k_0$ and define $\delta(d)=\delta$ and
$\hat{Q}\equiv \hat{Q}_d=(\overleftarrow{K}^d-\delta)/(1-\delta)$.
Note that $\hat{Q}$ is still irreducible and uniformly aperiodic.

For $n=rd+s$, $0\leq s<d$, define
$\rho(n;y,x_n)=\sum_z(\hat{Q})^r (y,z)\overleftarrow{K}^s(z,x_n)\leq 1$. As in (2.15) in
\cite{Kesten}
\begin{eqnarray}
\overleftarrow{K}^n(y,x_n)&=&
\sum_{\ell=0}^r
\left(\begin{array}{c} r\\\ell\end{array}\right)
\delta^{\ell}(1-\delta)^{r-\ell}\sum_p(\hat{Q})^{r-\ell}(y,p) \overleftarrow{K}^s(y,x_n)\nonumber\\
&=&\sum_{\ell=0}^r B(r,\ell)\rho((r-\ell)d+s;y,x_n)\label{bottom}
\end{eqnarray}
where $B(r,\ell)=\left(\begin{array}{c} r\\\ell\end{array}\right)
\delta^{\ell}(1-\delta)^{r-\ell}$
and
\begin{eqnarray}
\overleftarrow{K}^{n+d}(y,x_n)&=&
\sum_{\ell=0}^{r+1}
B(r+1,\ell)\rho((r+1-\ell)d+s;y,x_n)\nonumber\\
&=&
\sum_{\ell=-1}^{r}
B(r+1,\ell+1)\rho((r-\ell)d+s;y,x_n)\label{top}
\end{eqnarray}

Now, $\overleftarrow{K}^n(y,x_n)= \theta^{-n}\mu(x_n)K^n(x_n,y)/\mu(y)$ and this decays slowly
since $$\lim_{n\to\infty}K^{n}(x_n,y)^{1/n}= \theta\mbox{ and }\lim_{n\to\infty}\mu(x_n)^{1/n}= 1.$$ This allows us to follow (2.2) in \cite{Orey}.
We split the sums in (\ref{top}) and (\ref{bottom}) parts close to the mean and parts a large
deviation away, we throw away the large deviation parts and then show
to show the ratio of the central part (\ref{top}) divided by the central part of (\ref{bottom}) tends to one.

More specifically following \cite{Orey} let $\sum '$ denote summation over $\ell$ satisfying
$|\ell-\delta r|\leq\epsilon r$ while $\sum ''$ denotes summation over $\ell$ satisfying
$|\ell-\delta r|>\epsilon r$. Therefore
\begin{eqnarray}
\lefteqn{\frac{\overleftarrow{K}^{n+d}(y,x_n)}{\overleftarrow{K}^n(y,x_n)}=
\frac{\sum '
B(r+1,\ell+1)\rho((r-\ell)d+s;y,x_n)}{\overleftarrow{K}^n(y,x_n)}}\nonumber\\
& &+\frac{\sum ''B(r+1,\ell+1)\rho((r-\ell)d+s;y,x_n)}
{\overleftarrow{K}^n(y,x_n)}.\label{splitit}
\end{eqnarray}
The numerator of the last term approaches zero at an exponential rate while the denominator decays polynomially as above so the last term is negligible.  We also conclude the numerator of the first term also decays polynomially as does the denominator of the first term.
Now split the denominator of the first term into sums $\sum '$ and $\sum ''$.  For the same reason we may throw away
the sum $\sum ''$.  We conclude
\begin{eqnarray*}
\frac{\overleftarrow{K}^{n+d}(y,x_n)}{\overleftarrow{K}^n(y,x_n)}&\sim&
\frac{\sum ' B(r+1,\ell+1)\rho((r-\ell)d+s;y,x_n)}
{\sum ' B(r,\ell)\rho((r-\ell)d+s;y,x_n)}
\end{eqnarray*}

Now for $|\ell-\delta r|\leq\epsilon r$,
$B(r+1,\ell+1)/B(r,\ell)=(r+1)/(\ell+1)$ is between $\frac{\delta}{\delta+\epsilon}(1+{\cal O}(1/r))$
and $\frac{\delta}{\delta-\epsilon}$. Since $\epsilon$ is arbitrarily small it follows that
 $\overleftarrow{K}^{n+d}(x_n,y)/\overleftarrow{K}^{n}(x_n,y)\to 1$
as $n\to\infty$.

The same argument implies  $\overleftarrow{K}^{n+k}(y,x_n)/\overleftarrow{K}^{n}(y,x_n)\to 1$
as $n\to\infty$  for $k\geq n_0$ or $k\leq -n_0$ so for instance
$\overleftarrow{K}^{n-n_0}(y,x_n)/\overleftarrow{K}^{n}(y,x_n)\to 1$.
Also for $\ell<n_0$,
\begin{eqnarray}
\frac{\overleftarrow{K}^{n+\ell}(y,x_n)}{\overleftarrow{K}^{n}(y,x_n)}&=&
\frac{\overleftarrow{K}^{n-n_0+n_0+\ell}(x_{n},y)}{\overleftarrow{K}^{n}(x_{n},y)}\nonumber\\
&=&\frac{\overleftarrow{K}^{n-n_0+k}(x_{n},y)}{\overleftarrow{K}^{n-n_0}(y,x_n)}
\frac{\overleftarrow{K}^{n-n_0}(y,x_n)}{\overleftarrow{K}^{n}(x_{n},y)}\label{evaluate}
\end{eqnarray}
with $k=n_0+\ell\geq n_0$.
Again using the above argument we see
\begin{eqnarray*}
\frac{\overleftarrow{K}^{n-n_0+k}(x_{n},y)}{\overleftarrow{K}^{n-n_0}(y,x_n)}\to 1
\end{eqnarray*}
so (\ref{evaluate}) tends to one. A similar argument gives for $-n_0<k<0$ gives
\begin{eqnarray*}
\frac{\overleftarrow{K}^{n+k}(y,x_n)}{\overleftarrow{K}^{n}(x_{n},y)}\to 1
\end{eqnarray*}
for all $k$ as $n\to\infty$.

We prove  the second expression in (\ref{us-lim2})
by showing
$$\frac{K^{n+1}(x_n,S)}{\theta K^{n}(x_n,S)}
=\frac{\mu\overleftarrow{K}^{n+1}(x_n)}{\mu\overleftarrow{K}^{n}(x_n)}\to 1.$$
The above steps all hold as long as $\lim_{n\to\infty}K^n(x_n,S)^{1/n}= \theta$ but this follows as above.
\end{proof}

\begin{Lem}\label{trivialtail}
Consider an irreducible Markov chain $Z_n$ on $S$ with kernel $Q$  satisfying Condition [1] then
the tail field and the invariant field of $Z_n$ are equal a.s. with respect to $P_{\nu}$ for any initial probability $\nu$.
\end{Lem}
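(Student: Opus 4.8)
The plan is to exploit the standard correspondence between tail events and bounded space-time harmonic sequences, and to show that Condition~[1] forces such sequences to be constant in time. Write $\theta$ for the shift on $S^{\NN}$, let $\mathcal{F}_m=\sigma(Z_0,\dots,Z_m)$, and let $\mathcal{T}=\bigcap_m\sigma(Z_m,Z_{m+1},\dots)$ and $\mathcal{I}=\{A:\theta^{-1}A=A\}$ be the tail and invariant fields. The inclusion $\mathcal{I}\subseteq\mathcal{T}$ is automatic, so it suffices to show that every $A\in\mathcal{T}$ is $P_\nu$-a.s.\ invariant, i.e.\ $\mathbf{1}_A=\mathbf{1}_A\circ\theta$ a.s.

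Fix $A\in\mathcal{T}$. For each $n$ choose $g_n$ with $\mathbf{1}_A=g_n\circ\theta^n$ and set $h_n(x)=\E_x[g_n]$, so that $0\le h_n\le1$. Since $g_n=g_{n+1}\circ\theta$, the Markov property gives the backward (space-time harmonic) relation $h_n=Qh_{n+1}$, and, again by the Markov property, $\E_\nu[\mathbf{1}_A\mid\mathcal{F}_m]=h_m(Z_m)$ while $\E_\nu[\mathbf{1}_{\theta^{-1}A}\mid\mathcal{F}_m]=h_{m-1}(Z_m)$. By L\'evy's $0$--$1$ law these two bounded martingales converge a.s.\ to $\mathbf{1}_A$ and $\mathbf{1}_{\theta^{-1}A}$ respectively, so it is enough to prove $h_m(Z_m)-h_{m-1}(Z_m)\to0$; in fact I will prove the stronger, purely analytic statement that $h_m\equiv h_{m-1}$ for every $m$.

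The key identity comes from iterating $h_m=Qh_{m+1}$: for every $j\ge0$ we have $h_m=Q^jh_{m+j}$ and $h_{m-1}=Q^{j+1}h_{m+j}$, whence
\[
h_m(x)-h_{m-1}(x)=\sum_y\bigl(Q^j(x,y)-Q^{j+1}(x,y)\bigr)h_{m+j}(y),
\]
and since $0\le h_{m+j}\le1$ this gives $|h_m(x)-h_{m-1}(x)|\le\|Q^j(x,\cdot)-Q^{j+1}(x,\cdot)\|_{\mathrm{TV}}$ for \emph{every} $j$. Thus the proof reduces to showing that, under Condition~[1],
\[
\|Q^j(x,\cdot)-Q^{j+1}(x,\cdot)\|_{\mathrm{TV}}\longrightarrow0\qquad(j\to\infty).
\]
Granting this, the left side above vanishes, so $h_m=h_{m-1}$ for all $m$; hence every $h_m$ equals a single bounded harmonic function $h=Qh$, the two martingales coincide, $\mathbf{1}_A=\mathbf{1}_{\theta^{-1}A}$ a.s., and $A\in\mathcal{I}$ modulo $P_\nu$-null sets. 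Since $A\in\mathcal{T}$ was arbitrary, $\mathcal{T}=\mathcal{I}$ a.s.\ under $P_\nu$.

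The total-variation convergence is the heart of the matter and the only place aperiodicity enters; it is the ``zero'' alternative of the Blackwell--Freedman/Derriennic $0$--$2$ law, the ``two'' alternative being excluded precisely by aperiodicity. I expect this to be the main obstacle, especially in the $R$-transient regime, where $Q^j(x,\cdot)$ escapes to infinity and the naive coupling at a fixed recurrent state is unavailable. The plan is to prove it either by invoking the $0$--$2$ law directly, once Condition~[1] is seen to force aperiodicity of $Q$, or by a Doeblin/Orey coupling built on the uniform diagonal minorization that Condition~[1] supplies---namely $Q^k(x,x)\ge2\delta(k)$ for $k\ge k_0$ uniformly in $x$, together with the $\mathrm{g.c.d.}=1$ structure of the return lengths $k_1,\dots,k_r\le N$---which allows one to insert a single extra loop to resynchronize a length-$j$ path with a length-$(j+1)$ path, the uniformity of $\delta(k)$ and the bound $N$ being exactly what keep the resynchronization discrepancy under control. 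The strong ratio limit property $Q^{j+1}(x,y)/Q^j(x,y)\to1$ from Lemma~\ref{follow} (with $\rho=1$) furnishes the companion pointwise control.
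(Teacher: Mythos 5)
Your first half is correct, and it is actually more self-contained than the paper's: the backward relation $h_m=Q^jh_{m+j}$, the bound $|h_m(x)-h_{m-1}(x)|\le\sum_y|Q^j(x,y)-Q^{j+1}(x,y)|$, and the two L\'evy martingales together give precisely the reduction that the paper obtains by citing Theorem 6 of \cite{Cohn} (your version even needs only pointwise-in-$x$ convergence, not $\sup_x\gamma(x)=0$). But that reduction is the one-line half of the lemma. The substance --- and the entire body of the paper's written proof --- is the total-variation convergence itself, which you leave as a ``plan''. The paper proves it by Kesten's binomial device: Condition [1] gives $Q^d(x,x)\ge\delta>0$ uniformly in $x$ for every $d\ge k_0$, so one may write $Q^d=(1-\delta)\hat Q+\delta I$ with $\hat Q$ a kernel, expand $Q^{rd+s}=\sum_\ell B(r,\ell)\hat Q^{r-\ell}Q^s$ with $B(r,\ell)=\binom{r}{\ell}\delta^\ell(1-\delta)^{r-\ell}$, compare $B(r+1,\ell+1)$ with $B(r,\ell)$ on the central range $|\ell-\delta r|\le\epsilon r$ while discarding the exponentially small tails, and conclude $\sup_x\sum_y|Q^m(x,y)-Q^{m+d}(x,y)|\to0$; the step from gap $d$ to gap $1$ then uses $1=(k_0+1)-k_0$ and the triangle inequality. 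Without this (or an equivalent), the lemma is not proved.

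Moreover, neither of your sketched routes closes the gap as stated. The $0$--$2$ law alternative is \emph{not} ``excluded precisely by aperiodicity'': take $S=\{0,1,2,\dots\}$, $Q(n,n+1)=1-\epsilon_n$, $Q(n,0)=\epsilon_n$ with $\epsilon_n>0$ and $\sum_n\epsilon_n<\infty$. This chain is irreducible and aperiodic, yet a.s.\ only finitely many jumps to $0$ occur, so $D=\lim_m(Z_m-m)$ exists, is nondegenerate, and satisfies $D\circ\theta=D+1$; hence for $A=\{D\mbox{ even}\}$ one has $A\,\Delta\,\theta^{-1}A=\Omega$ a.s., so this tail event is not a.s.\ equal to any invariant event (under any full-support initial law), and by your own reduction the total-variation convergence must fail. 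What rules this out in the lemma is not aperiodicity but the \emph{uniformity} in Condition [1] --- which fails here, since a state $n>N$ has no return time $\le N$ at all. Concretely, $Q^{m+k}(x,y)\ge Q^k(x,x)Q^m(x,y)\ge\delta(k)Q^m(x,y)$ yields the uniform overlap bound $\sup_{x,m}\sum_y|Q^{m+k}(x,y)-Q^m(x,y)|\le2(1-\delta(k))<2$, and only with such a uniform bound does the Ornstein--Sunyach/Derriennic dichotomy force the limit to be $0$. Your coupling sketch has the same defect: inserting ``a single extra loop'' to resynchronize paths of lengths $j$ and $j+1$ succeeds only with the fixed probability $\delta(k)$, giving the non-improving bound $2(1-\delta(k))$ rather than convergence to $0$; to get $0$ one must use the roughly $\delta r$ independent loop-insertion opportunities along a path of length $rd$ and compare the resulting binomial weights at $r$ versus $r+1$ trials --- which is exactly the computation from \cite{Kesten} that the paper reproduces. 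So either route can be repaired, but the repair is the actual content of the proof and is missing from your proposal.
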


\begin{proof}
By Theorem 6 in \cite{Cohn} it suffices to show $\sup_{x\in S}\gamma(x)=0$ where
$$\gamma(x)=\lim_{m\to\infty}\sum_{y\in S}|Q^m(x,y)-Q^{m+1}(x,y)|.$$
As in the proof of Lemma \ref{follow}
 take $d=k_0$ and define
$\hat{Q}\equiv \hat{Q}_d=(\overleftarrow{K}^d-\delta)/(1-\delta)$
and for $m=rd+s$, $0\leq s<d$, define
$\rho(m;y)=\sum_{z\in S}\hat{Q}^r(x,z) Q^s(z,y)$. As in (2.15) in
\cite{Kesten}
\begin{eqnarray}
{Q}^m(x,y)&=&
\sum_{\ell=0}^r
\left(\begin{array}{c} r\\\ell\end{array}\right)
\delta^{\ell}(1-\delta)^{r-\ell}\sum_z\hat{Q}^{r-\ell}(x,z) Q^s(z,y)\nonumber\\
&=&\sum_{\ell=0}^r B(r,\ell)\rho((r-\ell)d+s,y)\label{bottomII}
\end{eqnarray}
where $B(r,\ell)=\left(\begin{array}{c} r\\\ell\end{array}\right)
\delta^{\ell}(1-\delta)^{r-\ell}$
and
\begin{eqnarray}
Q^{m+d}(x,y)&=&
\sum_{\ell=0}^{r+1}
B(r+1,\ell)\rho((r+1-\ell)d+s,y)\nonumber\\
&=&
\sum_{\ell=-1}^{r}
B(r+1,\ell+1)\rho((r-\ell)d+s,y)\label{topII}
\end{eqnarray}

Consequently,
\begin{eqnarray*}
\lefteqn{\sum_{y\in S}|Q^m(x,y)-Q^{m+d}(x,y)|}\\
&\leq&\sum_{y\in S}B(r+1,0)\rho((r+1)d+s,y)\\& &+
\sum_{\ell=0}^r|B(r+1,\ell+1)- B(r,\ell)|\sum_{y\in S}
\rho((r-\ell)d+s,y)\\
&\leq &B(r+1,0)+\sum\  '|\frac{B(r+1,\ell+1)}{B(r,\ell)}- 1|B(r,\ell)\\
& &+\sum\  ''|\frac{B(r+1,\ell+1)}{B(r,\ell)}- 1|B(r,\ell)
\end{eqnarray*}
as at (\ref{splitit}). $\sum '$ goes to zero as $m$ and hence $r$ tend to infinity (uniformly in $x$) since
$$\frac{B(r+1,\ell+1)}{B(r,\ell)}=\delta\frac{r+1}{\ell+1}\to 1$$ uniformly for $\ell$ in the range $|\ell-r\delta|\leq \epsilon r.$
The sum of terms $B(r,\ell)$ in $\sum ''$   is exponentially small
and since $B(r+1,\ell+1)/B(r,\ell)=(r+1)/(\ell+1)\leq r+1$ we conclude
$\sum ''$ also tends to zero.
By uniform aperiodicity and the above argument,
\begin{alignat*}{2}
	\sum_{y \in S} \abs{Q^m(x,y) - Q^{m + d}(x,y)} &\to 0.
\end{alignat*}
Hence, the result holds by the triangle inequality.
\end{proof}

\subsection{Tightness}
We now impose
\begin{itemize}\label{bigone}
\item[Condition [2]] $R>1$ and $E_z R^{\zeta}<\infty$ for all $z$.
\item[Condition [3]]  For any $m$, $P_z(\zeta>m)\to 1$ as $z\to\infty$
	(\mbox{ see (1.10) in \cite{Kesten}})\\
\end{itemize}

We will need the following properties:
\begin{itemize}\label{summable}
\item[Condition [A]] $s(z)=\sum_{y\in S} G_{z,y}(R)<\infty$ for one $z$ and hence all $z\in S$
\end{itemize}
\begin{itemize}\label{limitprob}
\item[Condition [B]] $\chi(z,\cdot)=G_{z,\cdot}(R)/s(z)$ is a tight family of probability measures
as $z\to \infty$.
\end{itemize}

\begin{Lem}
If Condition [2] holds then Condition [A] holds.
\end{Lem}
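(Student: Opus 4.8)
The plan is to evaluate $s(z)$ in closed form by reorganizing its defining double sum and matching the result to the moment generating function $E_z R^{\zeta}$. First I would interchange the order of summation,
$$s(z) = \sum_{y \in S} G_{z,y}(R) = \sum_{y \in S} \sum_{n \geq 0} K^n(z,y) R^n = \sum_{n \geq 0} R^n \sum_{y \in S} K^n(z,y) = \sum_{n \geq 0} R^n K^n(z,S),$$
which is justified by Tonelli's theorem since every term is non-negative. Using the identity $K^n(z,S) = P_z(\zeta > n)$ recorded in the introduction, this gives $s(z) = \sum_{n \geq 0} R^n P_z(\zeta > n)$, so the problem reduces to controlling a weighted tail sum of $\zeta$.

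Next I would convert this tail sum into the generating function. Since $X_0 = z \in S$ we have $\zeta \geq 1$, hence $P_z(\zeta = 0) = 0$. Writing $P_z(\zeta > n) = \sum_{m > n} P_z(\zeta = m)$ and swapping the two sums (again Tonelli), the inner geometric sum $\sum_{n=0}^{m-1} R^n = (R^m - 1)/(R-1)$ appears, which is legitimate precisely because $R > 1 \neq 1$. Collecting terms yields
$$s(z) = \frac{1}{R-1} \Big[ \sum_{m \geq 1} R^m P_z(\zeta = m) - \sum_{m \geq 1} P_z(\zeta = m) \Big] = \frac{1}{R-1}\big(E_z R^{\zeta} - 1\big),$$
where I have used $\sum_{m \geq 1} R^m P_z(\zeta = m) = E_z R^{\zeta}$ and $\sum_{m \geq 1} P_z(\zeta = m) = P_z(\zeta < \infty) = 1$. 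The first of these requires the remark that finiteness of $E_z R^{\zeta}$ with $R > 1$ forces $P_z(\zeta = \infty) = 0$ (otherwise the event $\{\zeta = \infty\}$ would contribute $R^{\infty} = \infty$), so there is no missing tail mass.

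From this closed form the conclusion is immediate: Condition [2] provides $R > 1$ and $E_z R^{\zeta} < \infty$, whence $s(z) < \infty$ for that $z$, and since Condition [2] is assumed for all $z$, Condition [A] holds. I do not expect a serious obstacle here, as the argument is essentially the standard tail-sum formula for a probability generating function. The only points needing care are the two interchanges of summation (both covered by non-negativity) and the boundary bookkeeping at $\zeta \geq 1$, together with the observation that under $R>1$ the hypothesis $E_z R^{\zeta} < \infty$ already excludes a positive probability of non-absorption, so the formula $s(z) = (E_z R^{\zeta} - 1)/(R-1)$ holds with no extra assumptions beyond Condition [2].
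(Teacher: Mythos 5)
Your proof is correct and follows essentially the same route as the paper's: interchange the double sum to get $\sum_{n\geq 0} R^n P_z(\zeta>n)$, convert the tail sum via the geometric series into $(E_z R^{\zeta}-1)/(R-1)$, and conclude finiteness from Condition [2]. The only addition is your explicit justification that $E_z R^{\zeta}<\infty$ forces $P_z(\zeta<\infty)=1$, a point the paper handles by its earlier remark that absorption may be assumed certain without loss of generality.
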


\begin{proof}
Note that
\begin{eqnarray*}
G_{z,S}(R)&=&\sum_{y\in S}\sum_{n \geq 0} K^n(z,y)R^n
=\sum_{n \geq 0} P_z(\zeta>n)R^n\\
&=&\sum_{n=0}^{\infty}\sum_{k=1}^{\infty}\chi\{k>n\}R^nP_z(\zeta=k)\\
&=&\sum_{k=1}^{\infty}\frac{R^k-1}{R-1}P_z(\zeta=k)=\frac{E_zR^{\zeta}-1}{R-1}.
\end{eqnarray*}
Hence $G_{z,S}(R)<\infty$ if Condition [2] holds.
\end{proof}

If Condition [A] holds follow \cite{Dynkin} to construct the entrance boundary taking standard function $\ell(\cdot)=1$
so the $\rho$-Martin entrance kernel is defined by $\chi(z,\cdot)=G_{z,\cdot}(R)/s(z)$.
Notice that the Martin kernel $\chi$ uses reference function $1$ because
we want to deal with $\rho$-invariant probabilities.
Hence $\chi$ is $\rho$-excessive probability in $x$ and would serve as $\mu$
in (\ref{us-inter-lim}).

\begin{Pro}\label{yaglomtight}
If $K$ satisfies Conditions [1],[2] and [3] then $\chi(z,\cdot)$ is a tight family of probabilities; i.e. Condition [B] holds.
\end{Pro}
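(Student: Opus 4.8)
The plan is to verify tightness directly: for every $\epsilon>0$ I want a finite set $F''\subset S$ with $\chi(z,S\setminus F'')<\epsilon$ for all $z$ outside a finite set. Writing the potential in occupation form,
$$\sum_{y\notin F}G_{z,y}(R)=E_z\Big[\textstyle\sum_{n=0}^{\zeta-1}R^n\1\{X_n\notin F\}\Big],\qquad s(z)=E_z\Big[\tfrac{R^\zeta-1}{R-1}\Big],$$
so $\chi(z,S\setminus F)$ is just the $R^n$-weighted fraction of the lifetime spent outside $F$. First I would record the two consequences of the hypotheses that drive everything. By Condition [2] the denominator $s(z)=(E_zR^\zeta-1)/(R-1)$ is finite, and by Condition [3], $E_zR^\zeta\ge R^{m}P_z(\zeta\ge m)\to R^m$, so not only $s(z)\to\infty$ but, crucially, $s(w)\to\infty$ as $w\to\infty$; equivalently $\{w:s(w)<L\}$ is finite for each $L$. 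This "long survival in the tail'' is the quantitative input I will exploit.

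Next I would run a first-entrance (strong Markov) decomposition at $\tau_F=\inf\{n\ge0:X_n\in F\}$ for a fixed finite set $F$ and a larger finite set $F''\supset F$. Splitting the lifetime at $\tau_F$ gives
$$\sum_{y\notin F''}G_{z,y}(R)=\tilde A(z)+E_z\big[\1\{\tau_F<\zeta\}R^{\tau_F}\textstyle\sum_{y\notin F''}G_{X_{\tau_F},y}(R)\big],$$
$$s(z)=E_z\Big[\textstyle\sum_{n<\tau_F\wedge\zeta}R^n\Big]+E_z\big[\1\{\tau_F<\zeta\}R^{\tau_F}s(X_{\tau_F})\big],$$
where $\tilde A(z)=E_z[\sum_{n<\tau_F\wedge\zeta}R^n\1\{X_n\notin F''\}]$ is the weighted time spent outside the large set $F''$ before reaching the small set $F$. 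For the nearest-neighbour integer chains that are the focus of the paper the chain enters $F=\{-a,\dots,a\}$ through its finite boundary $\{\pm a\}$, so $X_{\tau_F}$ lies in the fixed finite set $\{\pm a\}$. The family $\{\chi(w,\cdot):w\in\{\pm a\}\}$ is a finite family of genuine probability measures (Condition [A]) and hence tight, so I can choose $F''$ large enough that $\sum_{y\notin F''}G_{w,y}(R)\le\epsilon\,s(w)$ for $w=\pm a$. The post-entrance term is then at most $\epsilon\,s(z)$, and dividing by $s(z)$ yields $\chi(z,S\setminus F'')\le \tilde A(z)/s(z)+\epsilon$.

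It remains to show $\tilde A(z)/s(z)\to0$, and this is where I expect the real work. Peeling the pre-$\tau_F$ path at its first visit to the far boundary of $F''$ splits $\tilde A(z)$ into (i) the weighted occupation outside $F''$ before that visit and (ii) the excursions back outside $F''$ made before finally reaching $F$. Comparing (i) with $s(z)\ge s(X_{\tau})E_z[\1\{\tau<\zeta\}R^{\tau}]$ (with $\tau$ the boundary-hitting time) bounds its relative size by $1/\big((R-1)\,s(\partial F'')\big)$, which vanishes as $F''\uparrow S$ precisely because $s(w)\to\infty$; and (ii), being occupation confined to a bounded neighbourhood of the boundary, stays $O(1)$ while $s(\partial F'')\to\infty$, so it too is negligible. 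The genuine obstacle inside this step is the contribution of trajectories killed before ever reaching the bulk, namely $E_z[\1\{\zeta\le\tau_F\}(R^\zeta-1)/(R-1)]$: such paths contribute their full weight to both numerator and denominator, so they must be shown to be an asymptotically vanishing fraction of $s(z)$. This is exactly the assertion that, under the $R^\zeta$-tilt that long-lived paths carry, a chain started far out reaches the bulk before absorption, and it is here that Conditions [2] and [3] have to be combined quantitatively. Once this death-before-reaching estimate is secured, letting first $F''\uparrow S$ and then $z\to\infty$ gives $\limsup_{z\to\infty}\chi(z,S\setminus F'')\le\epsilon$, which is Condition [B].
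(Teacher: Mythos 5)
Your proposal contains a genuine gap, and you name it yourself: the ``death-before-reaching'' estimate --- that $E_z\bigl[\1\{\zeta\le\tau_F\}(R^\zeta-1)/(R-1)\bigr]$ is an asymptotically vanishing fraction of $s(z)$ --- is never proved; you only remark that ``Conditions [2] and [3] have to be combined quantitatively'' at that point. But that estimate \emph{is} the content of the proposition; the surrounding machinery (first-entrance decomposition at $\tau_F$, splitting at the boundary of $F''$, comparison with $s(\partial F'')$) is scaffolding that does not make it any easier. Moreover, the scaffolding itself is not available at the proposition's level of generality: Proposition \ref{yaglomtight} is stated for an arbitrary irreducible chain on a countable state space satisfying Conditions [1,2,3] --- it sits in the preliminaries, before the paper restricts to $\mathbb{Z}$ --- so you cannot appeal to the chain ``entering $F$ through its finite boundary $\{\pm a\}$'', to a ``far boundary of $F''$'', or to $s(\partial F'')\to\infty$; these notions do not exist for general countable $S$. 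There is also a subtlety your sketch never addresses: Conditions [1,2,3] allow killing at infinitely many states (only forcing the killing probability to vanish at infinity), so for a poorly chosen finite $F$ the chain can indeed die before reaching $F$ with non-negligible tilted weight; your $F$ would have to be tied to the region where killing is non-negligible, which is precisely what needs to be quantified.

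The paper's proof shows that the step you postponed is where Conditions [2] and [3] combine, and that no path decomposition is needed at all; the argument is uniform in $z$, not just asymptotic. Fix $\epsilon>0$; since $\zeta\ge 1$ one has $E_zR^\zeta\ge R$, so $E_zR^\zeta/(E_zR^\zeta-1)$ is bounded uniformly in $z$ (the paper gets boundedness from Condition [3], which forces $E_zR^\zeta\to\infty$), and one can pick $m$ with $R^{-m}E_zR^\zeta/(E_zR^\zeta-1)<\epsilon/2$ for all $z$; by Condition [3] pick a finite $C$ with $P_x(\zeta\le m)<\epsilon/2$ for $x\in U=C^c$. Now split each visit to $x\in U$ according to whether the chain survives $m$ further steps from $x$: by the Markov property $\sum_{x\in U}K^n(z,x)P_x(\zeta>m)\le P_z(\zeta>n+m)$, and summing against $R^n$ gives $\sum_n R^nP_z(\zeta>n+m)\le R^{-m}E_zR^\zeta/(R-1)$, which divided by $s(z)=(E_zR^\zeta-1)/(R-1)$ is less than $\epsilon/2$; the complementary piece is bounded by $\sup_{x\in U}P_x(\zeta\le m)<\epsilon/2$ outright. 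Hence $\chi(z,U)\le\epsilon$ for \emph{every} $z$, which is Condition [B]. In short, the one estimate you left open is the theorem itself, and the hitting-time route you chose both narrows the generality and makes that estimate harder to reach than the direct two-way split the paper uses.
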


\begin{proof}
By Condition [3], $E_z R^{\zeta}\to \infty$ as $z\to\infty$.
Hence $E_z R^{\zeta}/(E_z R^{\zeta}-1)$ is bounded by some constant $B$ as a function of $z$.
For any $\epsilon >0$ pick an integer $m$ such that $R^{-m}B<\epsilon/2$.
Next, pick a compact set $C$ such that for $x\in U=C^c$, $P_x(\zeta\leq m)<\epsilon/2$.
\begin{eqnarray*}
\lefteqn{\chi(z,U)=\frac{\sum_{x\in U}\sum_{n=0}^{\infty}R^n K^n(z,x)}{\sum_{n=0}^{\infty}R^n K^n(z,S)}     }\\
&=&\frac{\sum_{x\in U}\sum_{n=0}^{\infty}R^n K^n(z,x)P_x(\zeta>m)}{\sum_{n=0}^{\infty}R^n K^n(z,S)}+
\frac{\sum_{x\in U}\sum_{n=0}^{\infty}R^n K^n(z,x)P_x(\zeta\leq m)}{\sum_{n=0}^{\infty}R^n K^n(z,S)}\\
&\leq&\frac{\sum_{n=0}^{\infty}R^n P_z(\zeta>n+m)}{\sum_{n=0}^{\infty}R^n K^n(z,S)}+ \frac{\sum_{n=0}^{\infty}R^nK^n(z,U)\sup_{x\in U}P_x(\zeta\leq m)}{\sum_{n=0}^{\infty}R^n K^n(z,S)}\\
&\leq&\frac{\sum_{n=0}^{\infty}R^n P_z(\zeta>n+m)}{\sum_{n=0}^{\infty}R^n K^n(z,S)}+\epsilon/2
\end{eqnarray*}

Next,
\begin{eqnarray*}
\sum_{n=0}^{\infty}R^n P_z(\zeta>n+m)&=&\sum_{n=0}^{\infty}R^n \sum_{k=1}^{\infty}\chi\{k>n+m\}P_z(\zeta=k)\\
&=&\sum_{k=m+1}^{\infty}P_z(\zeta=k)\sum_{n=0}^{k-m-1}R^n\\
&=&\sum_{k=m+1}^{\infty}\frac{R^{k-m}-1}{R-1}P_z(\zeta=k)\\
&\leq&R^{-m}\sum_{k=m+1}^{\infty}\frac{R^{k}}{R-1}P_z(\zeta=k)\\
&\leq&R^{-m}\frac{E_z R^{\zeta}}{R-1}.
\end{eqnarray*}
Consequently
\begin{eqnarray*}
\frac{\sum_{n=0}^{\infty}R^n P_z(\zeta>n+m)}{\sum_{n=0}^{\infty}R^n K^n(z,S)}&\leq&R^{-m}\frac{E_z R^{\zeta}}{E_zR^{\zeta}-1}\leq \frac{\epsilon}{2}
\end{eqnarray*}
We conclude that $\chi(z,U)\leq \epsilon$ for all $z$ so the sequence $\chi(z,\cdot)$ is tight.
\end{proof}

\begin{Pro}\label{space-time-tightness}
If Conditions [1,2,3] hold then the family of probabilities $\frac{K^n(x,\cdot)}{K^n(x,S)}$ is tight.
\end{Pro}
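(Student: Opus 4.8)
The plan is to reduce tightness of $\set{K^n(x,\cdot)/K^n(x,S)}_n$ to a single tail estimate: given $\epsilon>0$ I must produce a finite set $C$ with $K^n(x,C^c)/K^n(x,S)<\epsilon$ for every $n$ (with $K^n(x,S)>0$). Writing $U=C^c$, the heart of the argument is a shift-in-time decomposition obtained by splitting each $y\in U$ according to whether the chain survives another $m$ steps:
\begin{eqnarray*}
K^n(x,U)&=&\sum_{y\in U}K^n(x,y)K^m(y,S)+\sum_{y\in U}K^n(x,y)\big(1-K^m(y,S)\big)\\
&\leq& K^{n+m}(x,S)+\Big(\sup_{y\in U}P_y(\zeta\leq m)\Big)K^n(x,S),
\end{eqnarray*}
where I used $\sum_{y\in S}K^n(x,y)K^m(y,S)=K^{n+m}(x,S)$ together with $1-K^m(y,S)=P_y(\zeta\leq m)$. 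Dividing by $K^n(x,S)$ yields the governing inequality
\begin{eqnarray*}
\frac{K^n(x,U)}{K^n(x,S)}\leq \frac{K^{n+m}(x,S)}{K^n(x,S)}+\sup_{y\in U}P_y(\zeta\leq m).
\end{eqnarray*}

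Both terms on the right are controllable. For the first, Condition [2] implies Condition [A], so that $\chi(z_0,\cdot)=G_{z_0,\cdot}(R)/s(z_0)$ is a finite $\rho$-excessive probability measure for any fixed base point $z_0$; hence Lemma~\ref{follow} applies and gives $K^{n+1}(x,S)/K^n(x,S)\to\rho$, and therefore $K^{n+m}(x,S)/K^n(x,S)\to\rho^m$ as $n\to\infty$ for each fixed $m$ by telescoping. Since $R>1$ we have $\rho=1/R<1$, so I may first fix $m$ so large that $\rho^m<\epsilon/4$. For the second term, Condition [3] says $P_y(\zeta\leq m)\to0$ as $y\to\infty$, so for this $m$ I can choose a finite set $C_0$ with $\sup_{y\in C_0^c}P_y(\zeta\leq m)<\epsilon/2$.

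It remains to convert the limit $K^{n+m}(x,S)/K^n(x,S)\to\rho^m$ into a bound that is uniform in $n$, and this is the only real obstacle: Lemma~\ref{follow} supplies only an asymptotic ratio, not a uniform one. I handle this by splitting the index set. There is an $N$ such that $K^{n+m}(x,S)/K^n(x,S)<\epsilon/2$ for all $n\geq N$, and combining with the governing inequality gives $K^n(x,C_0^c)/K^n(x,S)<\epsilon$ for $n\geq N$. For the finitely many indices $n<N$, each $K^n(x,\cdot)/K^n(x,S)$ is a single probability measure on the countable set $S$ and is therefore tight, so I can choose finite sets $C_n$ with $K^n(x,C_n^c)/K^n(x,S)<\epsilon$. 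Taking $C=C_0\cup C_1\cup\cdots\cup C_{N-1}$, which is finite, forces $C^c\subseteq C_0^c$ and $C^c\subseteq C_n^c$, whence $\sup_n K^n(x,C^c)/K^n(x,S)<\epsilon$, establishing tightness. The main subtlety is thus not any individual estimate but the need to feed Lemma~\ref{follow} a genuine finite $\rho$-excessive probability (supplied by $\chi$ via Condition [A]) and then to patch the asymptotic ratio bound together with the trivial tightness of each single measure at the small indices.
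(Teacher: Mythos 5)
Your proof is correct and follows essentially the same route as the paper's: the same survive-another-$m$-steps decomposition bounding $K^n(x,U)$ by $K^{n+m}(x,S)+\sup_{y\in U}P_y(\zeta\leq m)\,K^n(x,S)$, with Lemma~\ref{follow} controlling the first ratio and Condition [3] the second. The only differences are that you make explicit two points the paper leaves implicit — feeding Lemma~\ref{follow} the $\rho$-excessive probability $\chi(z_0,\cdot)$ guaranteed by Condition [A], and patching the finitely many indices $n<N$ by unioning finite sets — both of which are sound and slightly sharpen the paper's argument.
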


\begin{proof}
We again follow the proof of tightness at the end of the example in Kesten (1995). By Condition [2] $\rho<1$. Pick $\epsilon >0$ and  pick $m$ such that
$\rho^m<\epsilon/4$,  So using Condition [1] and Lemma \ref{follow} for $n$ sufficiently large
$K^{n+m}(x,S)/K^n(x,S)<\epsilon/2$.
If Condition [3] holds
we can pick a finite set $C$ such that $P_z(\zeta\leq m) <\epsilon/2$ for $z\in U=C^c$.
Hence,
\begin{eqnarray*}
\frac{K^n(x,U)}{K^n(x,S)}&=&\frac{\sum_{y\in U}K^n(x,y)P_y(\zeta\leq m)}{K^n(x,S)}+\frac{\sum_{y\in U}K^n(x,y)P_y(\zeta > m)}{K^n(x,S)}\\
&\leq&\frac{\sum_{y\in U}K^n(x,y)P_y(\zeta\leq m)}{K^n(x,S)}+\frac{K^{n+m}(x,S)}{K^n(x,S)}\\
&\leq &\epsilon/2+\epsilon/2.
\end{eqnarray*}

The extension to a varying starting points follows by the above argument.
\end{proof}

\subsection{Spatial Martin boundaries}
As in \cite{Dynkin},  the space of $\rho$-harmonic functions is described by the space of exits $B$ inside the  $\rho$-Martin exit boundary $E$; i.e. points $e\in B$ in the completion corresponding to limits $h_e(x)= \lim_{z\to e}G_{x,z}(R)/G_{x_0,z}(R) $
  where $h_e$ is $\rho$-harmonic and minimal.
The $h_e$-transform $\tilde{X}^{h_e}$ with respect to a $\rho$-harmonic function   has a probability transition kernel
$\tilde{K}(x,y)\equiv \tilde{K}^{h_e}(x,y)=RK(x,y)h_e(y)/h_e(x)$
and $P(\tilde{X}_n\to e)=1$
 (see Theorem 11 in \cite{Dynkin}).

Suppose  $z_n\to e$ in the Martin topology.
Let $h$ be some harmonic function.
We say $h$ satisfies the relative Fatou theorem \cite{Doobrelative} if $h(z_n)/h_e(z_n)\to \alpha_e$
where $\alpha_e$ is some constant that does not depend on the sequence.

\begin{Pro} Let $h$ be a $\rho$-harmonic function satisfying the relative
Fatou theorem.  The $h$-tranform $X^h_n$ converges almost surely to $X^h_{\infty}$ taking values
in the Martin boundary. Suppose that $P_{x}(X^h_{\infty}=e)>0$ for all $x\in S$.
Then $\lim_{z_n\to e}P_{z_n}(X^h_{\infty}=e)=1$.
\end{Pro}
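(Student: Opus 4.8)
The plan is to show that $u(x) \equiv P_x(X^h_\infty = e)$ is, after multiplying by the normalizer $h$, exactly the constant multiple $\nu_h(\{e\})\,h_e$ of the minimal function $h_e$, and then to read off the boundary limit from the relative Fatou theorem. Throughout, $P_x$ and $E_x$ denote the law and expectation of the $h$-transform $X^h$, whose one-step kernel is $\tilde{K}^h(x,y)=RK(x,y)h(y)/h(x)$.

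First I would record that $\{X^h_\infty = e\}$ is a shift-invariant event for $X^h$, so by the Markov property $u(X^h_n) = E_x[\mathbf 1\{X^h_\infty = e\}\mid \mathcal{F}_n]$ is a bounded martingale converging a.s. to $\mathbf 1\{X^h_\infty = e\}$; in particular $u$ is bounded and $\tilde{K}^h$-harmonic, $\tilde{K}^h u = u$. A one-line computation with $\tilde{K}^h(x,y)=RK(x,y)h(y)/h(x)$ then gives $RK(uh)=uh$, so $v \equiv uh$ is a nonnegative $\rho$-harmonic function for $K$ with $0 \le v \le h$. Next I would invoke the Martin representation (Theorem 11 in \cite{Dynkin}) $h(x)=\int_B h_{e'}(x)\,\nu_h(de')$ together with the fact that the exit law of the $h$-process is the harmonic measure
\[
P_x(X^h_\infty \in de') = \frac{h_{e'}(x)}{h(x)}\,\nu_h(de').
\]
Evaluating at the single point $e$ yields $u(x) = \nu_h(\{e\})\,h_e(x)/h(x)$, i.e. $v = \nu_h(\{e\})\,h_e$. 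The hypothesis $u(x)>0$ for all $x$ is then precisely the statement that $e$ is an atom of the representing measure, $c \equiv \nu_h(\{e\}) > 0$.

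With this identification in hand the passage to the boundary is short. Writing $u(z_n) = c\,h_e(z_n)/h(z_n) = c\,\big/\big(h(z_n)/h_e(z_n)\big)$ and using that $h$ satisfies the relative Fatou theorem, we have $h(z_n)/h_e(z_n)\to \alpha_e$ as $z_n \to e$, so $u(z_n)\to c/\alpha_e$. It then remains to identify $\alpha_e = c$. I would do this by decomposing $h = c\,h_e + g$ with $g = \int_{B\setminus\{e\}} h_{e'}\,\nu_h(de')$ and showing that the non-atomic part has vanishing relative limit, $g(z_n)/h_e(z_n)\to 0$; equivalently, Doob's relative Fatou theorem \cite{Doobrelative} identifies $\alpha_e$ with the density $d\nu_h/d\delta_e$ at $e$, which is exactly the atomic mass $\nu_h(\{e\})=c$. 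Hence $u(z_n)\to c/c = 1$.

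The main obstacle is precisely this identification $\alpha_e = \nu_h(\{e\})$: that only the atom of $\nu_h$ at $e$ survives in the relative boundary limit, while the continuous part and all other atoms of $\nu_h$ contribute nothing. This is where the relative Fatou hypothesis does its essential work. The mere existence of the limit only yields $u(z_n)\to c/\alpha_e$, and since $0\le u\le 1$ one gets the soft bound $c\le \alpha_e$ for free; closing the gap to equality requires the full strength of Doob's theorem — the vanishing of the singular part in the relative limit — applied here to the non-atomic $\rho$-harmonic function $g$.
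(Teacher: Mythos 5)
Your argument is essentially correct, but it takes a genuinely different route from the paper's, and one step needs sharpening. The paper proves the formula $P_{z}(X^h_\infty = e) = \frac{h_e(z)}{h(z)}\,\alpha_e$ directly, with no representation theorem and no appeal to Doob: it writes $P_{z}(X^h_N \in U_m)$ as the $h_e$-process expectation $E_{z}\bigl[\tfrac{h_e(z)}{h(z)}\tfrac{h(X^{h_e}_N)}{h_e(X^{h_e}_N)}\chi\{X^{h_e}_N\in U_m\}\bigr]$ and uses that $X^{h_e}_N \to e$ almost surely in the Martin topology, so that the relative Fatou hypothesis, applied along these random sequences, produces the constant $\alpha_e$ inside the expectation; letting $z_n \to e$, the same $\alpha_e$ appears in the denominator and cancels, giving the limit $1$ without ever identifying its value. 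You instead obtain $P_x(X^h_\infty = e) = \nu_h(\{e\})\,h_e(x)/h(x)$ from the harmonic-measure form of the Martin representation and then need the external identity $\alpha_e = \nu_h(\{e\})$. Here is the caution: Doob's relative Fatou theorem does not say that the Martin-topology limit of $h/h_e$ (equivalently of $g/h_e$) equals the density --- unrestricted boundary limits of $g/h_e$ can fail to vanish for non-atomic $g$, which is precisely why \emph{fine} limits appear in such theorems. What Doob gives is the a.s.\ limit of $h(X^{h_e}_n)/h_e(X^{h_e}_n)$ along the $h_e$-process; to equate this with $\alpha_e$ you need the bridge that $X^{h_e}_n \to e$ a.s.\ in the Martin topology (Theorem 4 in \cite{Dynkin}), so that your standing hypothesis --- which holds along \emph{every} sequence converging to $e$ --- forces the path-wise limit to be $\alpha_e$, and Doob then pins that common value as $\nu_h(\{e\})$. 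This bridge is exactly what the paper's change-of-measure computation implements inline. Once that sentence is added your proof closes, and it buys two things the paper's does not: the probabilistic meaning of $\alpha_e$ as the atom mass $\nu_h(\{e\})$ becomes explicit, and you avoid the interchange of the $N\to\infty$ limit with the expectation (a uniform-integrability point for the likelihood-ratio martingale) that the paper's proof performs without comment.
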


\begin{proof} Suppose $U_m$ is any sequence of $\epsilon=1/m$ neighbourhoods of $e$.
\begin{eqnarray*}
P_{z_n}(X^h_{\infty}=e)&=&
\lim_{m\to\infty}\lim_{N\to\infty}P_{z_n}(X^h_{N}\in U_m)\\
&=&\lim_{m\to\infty}\lim_{N\to\infty}E_{z_n}[\frac{h_e(z_n)}{h(z_n)}
\frac{h(X^{h_e}_N)}{h_e(X^{h_e}_N)}\chi\{X^{h_e}_N\in U_m\}]\\
&=&\frac{h_e(z_n)}{h(z_n)}\alpha_e
\end{eqnarray*}
 since $X^{h_e}_N\to e$ with probability one.
 Hence, as $z_n\to e$,
$$P_{z_n}(X^h_{\infty}=e)=\frac{h_e(z_n)}{h(z_n)}\alpha_e\to 1$$
as $z_n\to e$.
\end{proof}

As in \cite{Dynkin} the space of $\rho$-invariant measures is described by  the space of entries $\hat{B}$ in the $\rho$-Martin entrance boundary $\hat{E}$. $b\in \hat{E}$ is a point in the completion of $S$ corresponding to the limit
$\pi_{b}(x)= \lim_{z\to b}\chi(z,x) $ where $\pi_b$ is an extremal $\rho$-invariant measure.

 By Conditions A and B above $\pi_b$ is a probability and by Theorem 11 in \cite{Dynkin}
 any $\rho$-invariant probability measure is a convex combination of the $\{\pi_b:b\in \hat{B}\}$ as in (73) of Theorem 11 in \cite{Dynkin}. In many cases, like Example \ref{twosided-asym}, Conditions A and B aren't necessary because we can calculate the extremals $\pi_b$ explicitly and verify they are probabilities.

The time reversal  $\overleftarrow{X}$ has kernel
$\overleftarrow{K}^{\pi_b}(x,z)=\frac{\pi_b(z)}{\rho\pi_b(x)}K(z,x)$.
The associated potential is denoted
 $\overleftarrow{G}^{\pi_b}(x,z)$ and using $\pi_b$ as a reference measure
 define $\overleftarrow{\nu}(z)=\sum_{y\in S}\pi_b(y)\overleftarrow{G}^{\pi_b}(y,z)$. The associated
 Martin kernel is
 $$\overleftarrow{k}_b(x,z)=\frac{\overleftarrow{G}^{\pi_b}(x,z)}{\overleftarrow{\nu}(z)}=\frac{\chi(z,x)}{\pi_b(x)}$$
 and the associated exit boundary coincides with $\hat{E}$
 (see Theorem 11 in \cite{Dynkin}). Moreover a.s. $P_{\pi_b}$, $\overleftarrow{X}_n\to \overleftarrow{X}_{\infty}=b$ (in the Martin entrance topology) and $\overleftarrow{k}_b(x,b)=1$.

 By Theorem 1 in \cite{Cohn}
 $$1=\overleftarrow{k}_b(x,b)=\frac{dP_x}{dP_{\pi_b}}|_{\cal{I}}\mbox{ $P_{\pi_b}$ a.s.}$$
 where ${\cal I}$ is the invariant $\sigma$ field. If we multiply both sides by the indicator
 of an invariant set $I$ and integrate with respect to $P_{\pi_b}$ we get
 $P_{\pi_b}(I)=P_{x}(I)$ for all $x$. We conclude ${\cal I}$ is trivial a.s. $P_{\pi_b}$.
 In Lemma \ref{trivialtail} we  showed that for chains satisfying Condition [1] the invariant and tail fields
 coincide. Consequently the tail field of $\overleftarrow{X}$ is trivial a.s. $P_{\pi_b}$.

The outline of a proof of Yaglom limits in Subsection \ref{outline}  and the proof in Section \ref{special} requires the following condition.
  \begin{itemize}\label{goodboundary}
\item[Condition [4]]  We assume  $B=\hat{B}$ and both can be identified with the geometric boundary in the sense that if a sequence $y_n\to b$ in the geometric boundary then both
    $h_b(x)= \lim_{y_n\to b}G_{x,y_n}(R)/G_{x_0,y_n}(R) $ and
 $\pi_b(\cdot)= \lim_{y_n\to b}\chi(y_n,\cdot) $.
\end{itemize}

 \subsection{Space-time Martin entrance boundary}
 Consider $K$ as a kernel on space-time $\tilde{S}=S\times (-\infty,\infty)$; i.e.
  $K(x,n;y,n+1)=K(x,y)$.
 Define the space-time Martin kernel
$$k((x,-m);(y,t))=\frac{K^{m+t}(x,y)}{K^m(x,S)}\mbox{ on }S\times (-\infty,0].$$
The space-time Martin entrance boundary is the completion of sequences $(x_i,-m_i)$ to a point $\tilde{b}$ in the space-time Martin entrance boundary $\tilde{N}$
whose space-time Martin kernel $k((x_i,-m_i);(\cdot,\cdot))$ converges to a
corresponding  an invariant measure $\pi(y,t)$ on $\tilde{S}$.

Under weak  conditions we can characterize points on the space-time Martin entrance boundary
as product probabilities:

\begin{Pro}\label{produceit}
Suppose $k((x_i,-m_i);(y,t))\to \pi(y,t)$ for all $y$ and $t$ along a sequence $(x_i,-m_i)$.
Also suppose
  \begin{eqnarray}
\lim_{i\to\infty}\frac{K^{m_i+t}(x_i,y)}{K^{m_i}(x_i,y)}=\theta^t
\mbox{ for all }y,t\label{oreysequence}
\end{eqnarray}
and the probabilities $K^{m_i}(x_i,\cdot)/K^{m_i}(x_i,S)$ are tight. Then  $K^{m_i}(x_{i},\cdot)/K^{m_i}(x_{i},S)$ converges weakly to a $\theta$-invariant probability $\pi$ and $$k((x_i,-m_i);(y,t))\to \pi(y,t)=\pi(y)\theta^t.$$
\end{Pro}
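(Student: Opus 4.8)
The plan is to extract everything from the single time slice $t=0$, where the space-time kernel is literally the conditional law in question: set $\phi_i(\cdot) := K^{m_i}(x_i,\cdot)/K^{m_i}(x_i,S) = k((x_i,-m_i);(\cdot,0))$, a probability on $S$. By hypothesis $\phi_i(y)\to\pi(y,0)$ for every $y$; write $\pi(\cdot):=\pi(\cdot,0)$. Since each $\phi_i$ is a probability, Fatou gives $\sum_y\pi(y)\le 1$, while tightness of $\{\phi_i\}$ supplies, for each $\epsilon>0$, a finite $C$ with $\phi_i(C^c)<\epsilon$ for all $i$, whence $\sum_{y\in C}\pi(y)=\lim_i\sum_{y\in C}\phi_i(y)\ge 1-\epsilon$; thus $\sum_y\pi(y)=1$ and $\pi$ is a genuine probability. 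Pointwise convergence of the probabilities $\phi_i$ to the probability $\pi$ then upgrades, by Scheff\'e's lemma, to total-variation convergence $\sum_y|\phi_i(y)-\pi(y)|\to 0$; in particular $\phi_i\to\pi$ weakly. The $\theta$-invariance of this limit is proved below.

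Next I would establish the product form by factoring the kernel through the diagonal ratio. For fixed $y,t$ and $i$ large enough that $K^{m_i}(x_i,y)>0$ (as is implicit in the hypothesis (\ref{oreysequence})),
\[
k((x_i,-m_i);(y,t)) = \frac{K^{m_i+t}(x_i,y)}{K^{m_i}(x_i,y)}\cdot\frac{K^{m_i}(x_i,y)}{K^{m_i}(x_i,S)}.
\]
The first factor tends to $\theta^t$ by (\ref{oreysequence}) and the second is $\phi_i(y)\to\pi(y)$, so the product converges to $\pi(y)\theta^t$; boundedness of the first factor covers the case $\pi(y)=0$. Since the left-hand side converges to $\pi(y,t)$ by assumption, this identifies $\pi(y,t)=\pi(y)\theta^t$, the asserted product form.

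It remains to show $\pi$ is $\theta$-invariant, and here tightness does the real work. For every $y$ the one-step Chapman--Kolmogorov identity is exact at each finite $i$:
\[
\sum_z \phi_i(z) K(z,y) = \frac{K^{m_i+1}(x_i,y)}{K^{m_i}(x_i,S)} = k((x_i,-m_i);(y,1)).
\]
The right-hand side tends to $\pi(y,1)=\theta\pi(y)$ by the product form just obtained. For the left-hand side, since $K(\cdot,y)\le 1$ is bounded and $\sum_z|\phi_i(z)-\pi(z)|\to 0$, one has $|\sum_z(\phi_i(z)-\pi(z))K(z,y)|\le\sum_z|\phi_i(z)-\pi(z)|\to 0$, so $\sum_z\phi_i(z)K(z,y)\to(\pi K)(y)$. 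Equating the two limits yields $(\pi K)(y)=\theta\pi(y)$ for every $y$, i.e.\ $\pi K=\theta\pi$.

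The crux is this last interchange of limit and summation. Without tightness, Fatou applied to the Chapman--Kolmogorov identity gives only the sub-invariance $\pi K\le\theta\pi$, and mass could escape to infinity, leaving $\pi$ merely $\theta$-excessive rather than $\theta$-invariant. Tightness of $\{\phi_i\}$ is precisely the hypothesis that forbids this leakage: it promotes pointwise to total-variation convergence, so bounded test functions such as $K(\cdot,y)$ pass to the limit without loss of mass, turning the inequality into the required equality.
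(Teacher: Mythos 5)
Your proof is correct, and on the product form it runs parallel to the paper's: tightness makes the $t=0$ slice $\pi(\cdot)=\pi(\cdot,0)$ a probability, and hypothesis (\ref{oreysequence}) turns the space-time limit into $\pi(y)\theta^t$ (the paper divides consecutive slices, computing $\pi(y,t+1)/\pi(y,t)=\theta$, which tacitly assumes $\pi(y,t)>0$; your multiplicative factorization avoids that division). The genuine difference is the invariance step. The paper dispatches it in one clause --- $\pi$ is $\theta$-invariant in space ``because $\pi(y,t)$ is invariant for $K$ (in space-time)'' --- which leans on the paper's definition of the space-time entrance boundary, where limits of Martin kernels are simply described as invariant measures; no argument for that invariance appears in the proof itself. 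As you observe, a pointwise limit of Martin kernels satisfies, via Fatou and the Chapman--Kolmogorov identity, only the sub-invariance $\pi K\le\theta\pi$, and mass escape is precisely what could break equality. You close this gap explicitly: Scheff\'e upgrades pointwise convergence of the probabilities $\phi_i$ to total-variation convergence, the bounded test function $K(\cdot,y)$ then passes through the exact identity $\sum_z\phi_i(z)K(z,y)=k((x_i,-m_i);(y,1))\to\theta\pi(y)$, and equality $\pi K=\theta\pi$ follows. What your route buys is a self-contained proposition whose proof pinpoints tightness as exactly the hypothesis forbidding leakage of mass to infinity; what the paper's route buys is brevity, at the cost of resting the key invariance claim on an assertion embedded in a definition.
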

In the case $\theta=\rho$ we can ensure tightness by assuming Conditions [1,2,3] by Proposition \ref{space-time-tightness}. Lemma \ref{follow2} gives criteria for
(\ref{oreysequence}) to hold.  Of course if $x_i=x$ for all $i$ then
(\ref{oreysequence}) holds from Lemma \ref{follow}.

\begin{proof}
 Let
 $\pi(\cdot)=\pi(\cdot,0)$. $\pi(y,0)=\lim_{i\to\infty}K^{m_i}(x_{m_i},y)/K^{m_i}(x_{m_i},S)$
is a probability measure by tightness.
Note
\begin{eqnarray*}
\frac{\pi(y,t+1)}{\pi(y,t)}&=&\lim_{i\to\infty}\frac{k((x_{m_i},-m_i);(y,t+1))}{k((x_{m_i},-m_i);(y,t))}\\
&=&\lim_{i\to\infty}\frac{K^{m_i+t+1}(x_{m_i},y)}{K^{m_i+t}(x_{m_i},y)}=\theta.
\end{eqnarray*}
Hence $\pi(y,t)=\pi(y,0)\theta^t\equiv \theta^t\pi(y)$ where $\pi$ is $\theta$-invariant for $K$ (in space) because $\pi(y,t)$
is invariant for $K$ (in space-time).
\end{proof}

\begin{Pro}
The minimal space-time invariant measures are of product form $\pi(y)\theta^{t}$ where $\pi$ is a minimal
$\theta$-invariant measure for $K$.
\end{Pro}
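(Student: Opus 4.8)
The plan is to read a space-time invariant measure as an entrance law: writing $\mu_t(\cdot)=\nu(\cdot,t)$, space-time invariance of $\nu$ for the kernel $K((x,s),(y,t))=K(x,y)\mathbb{1}[t=s+1]$ is exactly the family of identities $\mu_t=\mu_{t-1}K$ for every $t\in\ZZ$. The easy inclusion is then immediate: if $\pi K=\theta\pi$ and we set $\mu_t=\theta^t\pi$, then $\mu_{t-1}K=\theta^{t-1}(\pi K)=\theta^t\pi=\mu_t$, so $\theta^t\pi(y)$ is space-time invariant. First I would record this, together with the observation that the time shift $\sigma\colon(y,t)\mapsto(y,t+1)$ is an automorphism of the space-time kernel, so $\sigma$ maps the cone of space-time invariant measures bijectively onto itself and therefore carries extreme rays to extreme rays.

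For the substantive direction, let $\nu$ be a minimal space-time invariant measure, with slices $\mu_t=\mu_{t-1}K$. By Dynkin's representation (Theorem 11 in \cite{Dynkin}) $\nu$ is a limit of space-time Martin kernels $k((x_i,-m_i);(y,t))=K^{m_i+t}(x_i,y)/K^{m_i}(x_i,S)$, and by Proposition \ref{space-time-tightness} (under Conditions [1,2,3]) the slices are tight, so each $\mu_t$ is a genuine measure. The whole proof then reduces to the single claim that $\sigma\nu$ is proportional to $\nu$: once we know $\sigma\nu=\theta\nu$ for some constant $\theta>0$, we get $\mu_{t+1}=\theta\mu_t$, hence $\mu_t=\theta^t\mu_0$, and setting $\pi=\mu_0$ the relation $\mu_1=\mu_0K$ together with $\mu_1=\theta\mu_0$ gives $\pi K=\theta\pi$, i.e. $\pi$ is $\theta$-invariant and $\nu(y,t)=\theta^t\pi(y)$ is in product form. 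The constant is forced to be $\theta=\mu_1(S)/\mu_0(S)$, the one-step decay of the total mass along the slices.

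The hard part is exactly proving $\sigma\nu=\theta\nu$, and this is where I expect the main obstacle to lie, since a priori the shift only permutes the extreme rays and need not fix them. My plan is to resolve it through triviality of the tail field, following the argument already used in the text for $P_{\pi_b}$. Passing to the time reversal $\overleftarrow{X}$ associated with $\nu$ and invoking Cohn's Theorem 1 in \cite{Cohn}, minimality of $\nu$ makes the invariant $\sigma$-field trivial; Lemma \ref{trivialtail}, valid under Condition [1], upgrades this to triviality of the tail field. Since the entrance point $\tilde b$ (equivalently the extremal direction of $\nu$) is tail-measurable, it is almost surely constant, so the one-step time shift, which acts on the tail, cannot move $\nu$ to a genuinely different extremal and can only rescale it, giving $\sigma\nu=\theta\nu$. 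As an independent check I would confirm this via Lemma \ref{follow2} and Proposition \ref{produceit}: triviality of the tail yields the strong ratio limit $K^{m_i+t}(x_i,y)/K^{m_i}(x_i,y)\to\theta^t$ of (\ref{oreysequence}) along the defining sequence, and Proposition \ref{produceit} then reproduces the product form $\nu(y,t)=\theta^t\pi(y)$ with $\pi$ a $\theta$-invariant probability.

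Finally I would transfer minimality across the correspondence $\nu(y,t)=\theta^t\pi(y)$. If $\pi$ were a nontrivial convex combination $\pi=\alpha\pi_1+(1-\alpha)\pi_2$ of distinct $\theta$-invariant measures, then $\theta^t\pi=\alpha\,\theta^t\pi_1+(1-\alpha)\,\theta^t\pi_2$ would exhibit $\nu$ as a proper mixture of space-time invariant measures, contradicting its minimality; hence $\pi$ is minimal $\theta$-invariant. Conversely, if $\pi$ is minimal $\theta$-invariant but $\nu=\theta^t\pi$ decomposed as $\int\nu_{\tilde b}\,d\lambda(\tilde b)$ over minimal space-time measures, I would first note that each $\nu_{\tilde b}$ is itself of product form $\theta_{\tilde b}^t\pi_{\tilde b}$ by the direction just proved, then match exponential growth rates in $t$ (using $\lim_n K^n(x,S)^{1/n}=\rho$ from Lemma \ref{follow} to fix the common scale) to force $\theta_{\tilde b}=\theta$ for $\lambda$-a.e. $\tilde b$, so that the $t=0$ slice gives $\pi=\int\pi_{\tilde b}\,d\lambda$, contradicting minimality of $\pi$ unless the decomposition is trivial. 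This closes the equivalence and establishes the proposition.
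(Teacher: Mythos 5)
Your reduction of the proposition to the single claim $\sigma\nu=\theta\nu$ is the right skeleton, and your transfer of minimality between $\nu$ and its slice $\pi$ matches what the paper does. But the way you prove the key claim has a genuine gap. You derive ``$\sigma$ can only rescale $\nu$'' from triviality of the tail field of the time reversal, and that inference is a non sequitur: triviality of a $\sigma$-field under $P_\nu$ says nothing about whether the shift fixes the extremal ray of $\nu$. The deterministic two-state cycle $K(1,2)=K(2,1)=1$ makes this concrete: the entrance law $\nu$ whose slice at time $t$ is the point mass at state $1$ for even $t$ and at state $2$ for odd $t$ is extremal, its reversal is deterministic, so every tail event is trivial under $P_\nu$, and yet $\sigma\nu$ is the \emph{other} extremal ray, not a multiple of $\nu$. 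Of course Condition [1] fails in that example, but that is exactly the point: in your argument Condition [1] enters only through Lemma \ref{trivialtail}, i.e. only to produce tail triviality, and tail triviality alone does not yield the conclusion; aperiodicity must be used in a quantitative, pointwise way, which your proof never does. There is a second, related problem: Cohn's theorem and Lemma \ref{trivialtail} are stated for time-homogeneous chains on $S$ satisfying Condition [1]. The space-time chain can never satisfy Condition [1] (one cannot return to the same space-time point), and the time reversal of the space-time chain with respect to a $\nu$ not yet known to be of product form is time-inhomogeneous, so neither result applies as stated; invoking them here is close to circular, since homogeneity of that reversal is essentially what the product form asserts. The same objection hits your ``independent check'': Lemma \ref{follow2} and Proposition \ref{produceit} need the ratio-limit hypothesis (\ref{oreysequence}) along the defining sequence, which you assert from tail triviality rather than prove.

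The paper closes the gap with the Lamperti--Snell argument (Theorem 3.1 of \cite{Lamperti-Snell}), which uses Condition [1] directly and never mentions tail fields. For minimal $\nu$ set $\nu'(x,t)=\nu(x,t+k)$ with $k\geq k_0$; then $\nu'$ is again minimal, and space-time invariance together with $K^k(x,x)\geq\delta(k)$ gives the pointwise domination $\nu'(x,t)=\sum_{z\in S}\nu(z,t)K^{k}(z,x)\geq\delta(k)\,\nu(x,t)$. Minimality of $\nu'$ then forces the dominated invariant measure $\delta(k)\nu$ to be proportional to $\nu'$, i.e. $\nu(x,t)=c_k\nu(x,t+k)$, and since this holds simultaneously for every $k\geq k_0$ the constants interlock to give $\nu(x,t+1)=\theta\,\nu(x,t)$ for a single $\theta$, whence the product form and the $\theta$-invariance of the slice. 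If you want to salvage your route, it is this domination step that must replace tail triviality; without it the shift is only guaranteed to permute the extremal rays, as the periodic example shows.
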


\begin{proof} Use the argument in Theorem 3.1 in \cite{Lamperti-Snell} (also see \cite{Molchanov}). Suppose $\pi(y,t)$ is an
extremal invariant measure on space-time. Using Condition [1] there exists
a $k_0$ and  $\delta(k)>0$ such that $K^{k}(x,x)\geq \delta(k)$ for all $k\geq k_0$ and all $x$. Pick $k=k_0$.
Define $\pi^{\prime}(x,t)=\pi(x,t+k)$. $\pi^{\prime}$  is also minimal
for suppose $\alpha^{\prime}$ is invariant and $\alpha^{\prime}(x,t)\leq \pi^{\prime}(x,t)$ for all $x$ and $t$ then  $\alpha(x,t)=\alpha^{\prime}(x,t-k)$
is also invariant and $\alpha(y,t)\leq \pi(x,t)$ for all $x$ and $t$.
Hence $\alpha=c\pi$ so $\alpha^{\prime}(x,t)=c\pi^{\prime}(x,t)$.

Next
\begin{eqnarray*}
\pi^{\prime}(x,t)&=&\pi(x,t+k)=\sum_{z\in S}\pi(z,t)K^{k}(z,x)\\
&\geq &\pi(x,t) K^{k}(x,x)\geq \delta \pi(x,t).
\end{eqnarray*}
But $\pi^{\prime}$ is extremal so $\pi(x,t)=c_k\pi^{\prime}(x,t)=c_k\pi(x,t+k)$
for all $x,t$. Hence $\pi(x,t)=c_k^m\pi(x,t+mk)$ for all $m$. But this works
for all $k\geq k_0$ so
$$\pi(x,t+1)=c_k^m\pi(x,t+1+mk)=c_k^m\pi(x,t+m_1k_1)$$
for some other $k_1\geq k_0$ and some other $m_1$.
Hence $\pi(x,t+1)=c_k^m c_{k_1}^{-m_1}\pi(x,t)$; i.e. $\pi(x,t+1)=\theta \pi(x,t)$
for all $x,t$. It follows that $\pi(x,t)=\theta^t \pi(x,0)=\theta^t \pi(x)$
and moreover $\pi$ is $\theta$-invariant.

If $\alpha(x)\leq \pi(x)$ for all $x$ then on space time the measure $\theta^t\alpha(x)$ is invariant and
$\theta^t\alpha(x)\leq \theta^t\pi(x)=\pi(x,t)$. But $\pi(x,t)$ is extremal
on space-time so $\theta^t\alpha(x)= c\theta^t\pi(x)$; i.e. $\alpha(x)=\pi(x)$
for all $x$. We conclude $\pi(x)$ is extremal.
\end{proof}

We will later establish which of the space-time
Martin entrance boundary points correspond to minimal measures.

To understand the complexity of the space-time entrance boundary
suppose Conditions [1,2,3] hold and
 $x_n\to b\in \hat{E}$; i.e.
$\pi_b(\cdot)= \lim_{x_n\to b}\chi(x_n,\cdot)$.
By Proposition \ref{space-time-tightness} the
 probabilities $K^{n}(x_n,\cdot)/K^{n}(x_n,S)$ are tight  hence
 $\pi_b(y)$ is a probability. Further suppose
$\lim K^{n}(x_n,y)^{1/n}= \rho$
and $\pi_b(x_n)^{1/n}\to 1$. We have by Lemma \ref{follow} that
$$\lim_{n\to\infty}\frac{K^{n+t}(x_n,y)}{K^{n}(x_n,y)}=\rho\mbox{ for all $y$}.$$
Now take a subsequence $n_i$ such that $(x_{n_i},-n_i)$ converges to a point $\tilde{b}$ in the space-time
entrance boundary;
i.e.
$$k((x_{n_i},-n_i);(y,t))=\frac{K^{n_i+t}(x_{n_i},y)}{K^{n_i}(x_{n_i},S)}\to \alpha(y,t)\mbox{ for all $y$ and $t$}.$$
where $\alpha$ is invariant for $K$ on space-time. Then by Proposition \ref{produceit} $\alpha(y,t)=\alpha(y)\rho^t$ where $\alpha$ is $\rho$-invariant.

The question is whether $\alpha=\pi_b$. Surprisingly the answer is no in general
as will be seen in \cite{Jaime}. One might conjecture that this holds if
the Jacka-Roberts Condition [5] holds and the associated $\hat{h}$-transform
$X^{\hat{h}}$ has the property $P_x(X^{\hat{h}}=b)>0$ then $\alpha=\pi_b$
and we could then conclude
$$\lim_{n\to\infty}k((x_n,-n);(y.t))
=\lim_{n\to\infty}\frac{K^{n+t}(x_{n},y)}{K^{n}(x_{n},S)}=\pi(y)\rho^t.$$
Such a result would allow us to prove Yaglom limits in general but
so far we only have this result for nearest neighbour random walks on the integers (see Section \ref{special}).

\subsection{Orey paths}
We now turn our attention to $\overleftarrow{X}$.
 Consider Theorem 1.4 in Chapter 3 of \cite{Orey} which we modify slightly.
 \begin{Thm}\label{OreyCnvgThm1}
 Consider a  Markov chain $Z_m$ defined on $(\Omega,{\mathcal F},P_{\alpha})$ taking values in a countable state space $S$ with kernel $Q$ and initial probability distribution $\alpha$. Let $S_m=\{z:\alpha Q^m(z)>0\}$.
 Define
 $$h_m(z)=\frac{\beta Q^{m+d}(z)}{\alpha Q^m(z)}, m\geq \max\{0,-d \},
 h_m(z)=0\mbox{ if }\alpha Q^m(z)=0$$
 where $\beta$ is any probability on $S$.  Then $h_m(Z_m)$ is a backward supermartingale
 with respect to the filtration ${\cal F}^m=\sigma(Z_m,Z_{m+1},\ldots)$ for the measure $P_{\alpha}$. Also $E_{\alpha}h_m(Z_m)\leq 1$.

 Moreover, if  $\beta Q^{m+d}(S_m)\to 1$
   as $m\to\infty$ then $E_{\alpha}h_m(Z_m)\to 1$ and
 $$\lim_{m\to\infty}\frac{\beta Q^{m+d}(Z_m)}{\alpha Q^m(Z_m)}= \frac{dP_{\beta}}{dP_{\alpha}}|_{{\cal I}}$$
 where   $\frac{dP_{\beta}}{dP_{\alpha}}|_{{\cal I}}$ is the Radon-Nikodym derivative
 of the measure $P_{\beta}$ by $P_{\alpha}$ where both are restricted to
 the invariant $\sigma$-algebra ${\cal I}$.
 If $Z_m$ has trivial tail field relative
 to the measure $P_{\alpha}$
 then $h_m(Z_m)$ converges almost surely to $1$ with respect $P_{\alpha}$.
 \end{Thm}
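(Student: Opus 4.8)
The plan is to recognize $h_m(Z_m)$ as a nonnegative backward (reverse) supermartingale whose expectations increase to $1$, to extract its almost sure limit by reverse supermartingale convergence, and then to identify that limit through one Markov-property computation that trades the time-shift by $d$ for the invariance of sets in $\mathcal{I}$. The supermartingale assertion I would check directly: since the reversed chain is Markov, conditioning on $\mathcal{F}^{m+1}$ reduces to conditioning on $Z_{m+1}$, and $P_\alpha(Z_m=z\mid Z_{m+1}=w)=\alpha Q^m(z)Q(z,w)/\alpha Q^{m+1}(w)$. Substituting $h_m$ and cancelling $\alpha Q^m(z)$ gives, with $w=Z_{m+1}$,
\[
E_\alpha[h_m(Z_m)\mid\mathcal{F}^{m+1}]=\frac{\sum_{z\in S_m}\beta Q^{m+d}(z)\,Q(z,w)}{\alpha Q^{m+1}(w)}\le\frac{\beta Q^{m+d+1}(w)}{\alpha Q^{m+1}(w)}=h_{m+1}(Z_{m+1}),
\]
the inequality coming solely from dropping the terms $z\notin S_m$. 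A plain expectation gives $E_\alpha h_m(Z_m)=\beta Q^{m+d}(S_m)\le 1$, nondecreasing in $m$, and under the hypothesis it increases to $1$. This settles the first two assertions.

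For almost sure convergence, $(h_m(Z_m))$ is a nonnegative reverse supermartingale bounded in $L^1$ (by $1$), so Doob's upcrossing inequality applied to each finite reversed block $(h_N(Z_N),\dots,h_0(Z_0))$ bounds the expected number of upcrossings of any $[a,b]$ by $a/(b-a)$ uniformly in $N$. Hence $h_m(Z_m)$ converges $P_\alpha$-a.s. to a limit $h_\infty$ measurable with respect to the tail field $\mathcal{T}=\bigcap_m\mathcal{F}^m$, and Fatou gives $E_\alpha h_\infty\le 1$.

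The reverse inequality, showing that no mass is lost so that $E_\alpha h_\infty=1$, is the step I expect to be the \emph{main obstacle}. Rather than estimate uniform integrability directly, I would iterate the supermartingale property to get $E_\alpha[h_m(Z_m)\mid\mathcal{F}^{m+k}]\le h_{m+k}(Z_{m+k})$ and then let $k\to\infty$: Lévy's downward theorem sends the left side to $E_\alpha[h_m(Z_m)\mid\mathcal{T}]$ a.s., while the right side tends to $h_\infty$ a.s., so $E_\alpha[h_m(Z_m)\mid\mathcal{T}]\le h_\infty$ and therefore $\beta Q^{m+d}(S_m)=E_\alpha h_m(Z_m)\le E_\alpha h_\infty$. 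Letting $m\to\infty$ forces $E_\alpha h_\infty\ge 1$, hence equality. Combined with a.s. convergence and convergence of the means, this gives $h_m(Z_m)\to h_\infty$ in $L^1$.

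Finally I identify the limit. For an invariant $A\in\mathcal{I}$ write $\mathbf{1}_A=g(Z_0,Z_1,\dots)$; since $A=\theta^{-m}A$ (where $\theta$ is the shift) the same $g$ represents $\mathbf{1}_A=g(Z_m,Z_{m+1},\dots)$ for every $m$. Conditioning on $Z_m$ and using the Markov property,
\[
E_\alpha[h_m(Z_m)\mathbf{1}_A]=\sum_{z\in S_m}\beta Q^{m+d}(z)\,E_z[g(Z_0,Z_1,\dots)]=P_\beta(A)-\delta_m,
\]
where extending the sum to all $z$ gives $E_\beta[g(Z_{m+d},Z_{m+d+1},\dots)]=E_\beta[\mathbf{1}_A]=P_\beta(A)$ by invariance, and the defect satisfies $0\le\delta_m\le 1-\beta Q^{m+d}(S_m)\to 0$. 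Passing to the limit with the $L^1$ convergence yields $E_\alpha[h_\infty\mathbf{1}_A]=P_\beta(A)$ for every $A\in\mathcal{I}$; that is, $h_\infty$ is a version of $dP_\beta/dP_\alpha|_{\mathcal{I}}$. Under Condition [1], Lemma~\ref{trivialtail} gives $\mathcal{I}=\mathcal{T}$, so the tail-measurable $h_\infty$ is genuinely $\mathcal{I}$-measurable and the identification is literal. For the last statement, a $P_\alpha$-trivial tail field forces the $\mathcal{T}$-measurable $h_\infty$ to be a.s. constant, and $E_\alpha h_\infty=1$ pins that constant at $1$, so $h_m(Z_m)\to 1$ a.s.
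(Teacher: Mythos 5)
Your proposal is correct and follows essentially the same route as the paper: the same one-step conditional computation establishing the backward supermartingale property and the bound $E_\alpha h_m(Z_m)=\beta Q^{m+d}(S_m)\le 1$, convergence of the backward supermartingale, and the same identification of the limit by integrating against invariant events $A$ and using $A=\theta^{-(m+d)}A$ together with $\beta Q^{m+d}(S_m)\to 1$. The only difference is one of packaging: where the paper simply cites the backward martingale theorem for uniform integrability, a.s.\ and $L^1$ convergence, you re-derive those ingredients by hand (upcrossings, L\'evy's downward theorem plus Scheff\'e's lemma to get $E_\alpha h_\infty=1$ and $L^1$ convergence), and you are somewhat more careful than the paper about the distinction between tail-measurability of the limit and $\mathcal{I}$-measurability, resolving it exactly as the paper does in its applications, via Lemma~\ref{trivialtail}.
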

We use the above theorem with $\alpha$ being a $\theta$-invariant probability
so $S_m=S$ and the requirement $\beta Q^{m+d}(S_m)\to 1$ is automatically true.
It is not always true however.
 Suppose $S=\{1,2,3,\ldots \}$ and $Q(x,x)=1/2^x$ and $Q(x,x+1)=1-1/2^x$ and $\alpha=\beta=\delta_{1}$. Then $S_m=\{1,2,\dots ,m\}$ but
 $$\beta Q^{m+1}(\{m+1\})=\prod_{x=1}^m(1-2^{-x})\geq
 \exp(-\frac{3}{2}\sum_{x=1}^m 2^{-x})>\exp(-3/2)$$
 since $-3z/2<\log(1-z)$ for $0<z\leq 0.58$. Hence $\beta Q^{m+1}(S_m)$
 does not converge to $1$.

 \begin{proof}
 If $\alpha Q^m(z)>0$,
\begin{eqnarray*}
 \lefteqn{E_{\alpha}\left[\frac{\beta Q^{m+d}(Z_m)}{\alpha Q^m(Z_m)}|Z_{m+1}=y\right]
 =\sum_{x\in S}\left[\frac{\alpha Q^m(x)\beta Q^{m+d}(x)Q(x,y)}{\alpha Q^m(x)\cdot \alpha Q^{m+1}(y)}\right]}\\
 &=&\sum_{x\in S_m}\frac{\beta Q^{m+d}(x)Q(x,y)}{\alpha Q^{m+1}(y)}
 \leq \frac{\beta Q^{m+d+1}(y)}{\alpha Q^{m+1}(y)}=h_{m+1}(y).
 \end{eqnarray*}
  Thus $h_m(Z_m)$ is a positive backward supermartingale with respect to $\sigma(Z_m,Z_{m+1},\ldots)$.
 Moreover,
\begin{eqnarray*}
\lefteqn{E_{\alpha}\left[h_m(Z_m)\right]=\sum_{x\in S}\alpha Q^m(x)\frac{\beta Q^{m+d}(x)}{\alpha Q^m(x)}}\\
&=&\sum_{x\in S_m}\beta Q^{m+d}(x)\leq 1.
\end{eqnarray*}

By the backward martingale theorem $h_m(Z_m)$ is uniformly integrable,
 $h_m(Z_m)$ converges almost surely and in $L^1(P_{\alpha})$ to $H$ and $EH = \lim_{m\to\infty}E_{\alpha}h_m(Z_m)$.
 If $\beta Q^{m+d}(S_m)\to 1$ the above shows
 $E_{\alpha}\left[h_m(Z_m)\right]\to 1$.
 Consequently $EH=1$. $H$  is measurable with respect to the invariant field
 so if the invariant field is trivial then $H=1$.

 Let $A$ be an invariant event. Then
 $$\int_A\frac{dP_{\beta}}{dP_{\alpha}}|_{{\cal I}}dP_{\alpha}
 =P_{\beta}(A).$$
 On the other hand
 \begin{eqnarray*}
\lefteqn{ E_{\alpha}\left[\frac{\beta Q^{m+d}(Z_m)}{\alpha Q^m(Z_m)}\chi_{A}\right]
 =E_{\alpha}\left[\frac{\beta Q^{m+d}(Z_m)}{\alpha Q^m(Z_m)}\chi_{\theta^{-m}A}\right]}\\
 &=&\sum_{x\in S_m}\left[\alpha Q^m(x)\frac{\beta Q^{m+d}(x)P_{x}(A)}
 {\alpha Q^m(x)}\right]
 =\sum_{x\in S_m}\beta Q^{m+d}(x)P_{x}(A)\\
 &=&P_{\beta}[\{Z_{m+d}\in S_m\}\theta^{-(m+d)}A]\to P_{\beta}(A)
 \end{eqnarray*}
where we used  $A=\theta^{-m}A=\theta^{-(m+d)}A$ and $S_m\uparrow S$.
Since the above holds for all invariant events $A$ we have our result.
\end{proof}
We see that Orey's theorem is the space-time analogue of Abrahamse's
\cite{Abrahamse}.

We now  apply Orey's Theorem \ref{OreyCnvgThm1} to the time reversal $\overleftarrow{X}^{\pi_b}\equiv \overleftarrow{X}$  of $X$ with respect to $\alpha=\pi_b$ where $\pi_b$ is an extremal $\theta$-invariant probability.
The tail field of $\overleftarrow{X}$
is trivial w.r.t. $P_{\pi_b}$.
By Theorem \ref{OreyCnvgThm1}, as $m\to\infty$, with $\beta=\delta_y$,
 \begin{eqnarray}\label{first}
 \frac{\overleftarrow{K}_{\pi_b}^{m+d}(y,\overleftarrow{X}_m)}{\sum_{z\in S}\pi_b(z)\overleftarrow{K}_{\pi_b}^m(z,\overleftarrow{X}_m)}\to 1\mbox{ a.s.} P_{\pi_b}
 \end{eqnarray}
Multiply the numerator of (\ref{first}) by $\pi_b(y)$ and use time reversal we have, almost surely
$P_{\pi_b}$, as $m\to\infty$,
\begin{eqnarray}\label{escape1}
\frac{K^{m+d}(\overleftarrow{X}_m,y)}{K^m(\overleftarrow{X}_m,S)}&\to& \theta^d\pi_b(y);
\end{eqnarray}
i.e. Yaglom limits hold along the trajectory $(\overleftarrow{X}_m,-m)$.

Dividing (\ref{escape1}) with $d=1$ by (\ref{escape1}) with $d=0$ gives $K^{m+1}(\overleftarrow{X}_m,y)/K^{m}(\overleftarrow{X}_m,y)\to\theta$.
Taking $d=1$ and $\beta=\pi_b$ we also get
$K^{m+1}(\overleftarrow{X}_{m},S)/K^{m}(\overleftarrow{X}_{m},S)\to\theta$.

We can reinterpret (\ref{escape1}) as a description of the space-time entrance boundary.
 In this context $(\overleftarrow{X}_{m},-m)$ converges
to a point in the space-time entrance boundary  corresponding to the invariant measure $\theta^t\pi_b(y)$.
In other word, along the Orey path $(\overleftarrow{X}_{m},-m)$
$$k(\overleftarrow{X}_{m},-m;y,t)\to \theta^t\pi_b(y).$$

Next apply Orey's Theorem \ref{OreyCnvgThm1} to the time reversal $\overleftarrow{X}$ with respect
to a $\rho$-invariant measure $\alpha$ having kernel $\overleftarrow{K}_{\alpha}$. Under Condition [1] the tail and invariant $\sigma$-fields are the same and the
invariant field is composed of the events of the form
$\overleftarrow{X}_{\infty}\in B$ where $B$ is a set in the Martin exit boundary.
Suppose $P_{\alpha}(\overleftarrow{X}_{\infty}=b)>0$ for some point $b$
in the exit boundary then on the set $\{\overleftarrow{X}_{\infty}=b\}$
\begin{eqnarray*}
 \frac{\overleftarrow{K}_{\alpha}^m(y,\overleftarrow{X}_m)}{ \alpha\overleftarrow{K}_{\alpha}^m(\overleftarrow{X}_m)}\to
 \frac{P_{y}(\overleftarrow{X}_{\infty}=b)}
 {P_{\alpha}(\overleftarrow{X}_{\infty}=b)}\mbox{ a.s.} P_{\alpha};
 \end{eqnarray*}
i.e.
\begin{eqnarray*}
 \frac{K^m(\overleftarrow{X}_m,y)}{ K^m(\overleftarrow{X}_m,S)}\to
 \alpha(y)\frac{P_{y}(\overleftarrow{X}_{\infty}=b)}
 {P_{\alpha}(\overleftarrow{X}_{\infty}=b)}\mbox{ a.s.} P_{\alpha}
 \end{eqnarray*}
 on the set $\{\overleftarrow{X}_{\infty}=b\}$.

 \subsection{The Jacka-Roberts condition}
By Lemma 2.3 in \cite{Jacka-Roberts}  if
 for some $x_0$,
\begin{eqnarray}
 P_x(\zeta>N-k)/P_{x_0}(\zeta>N)
\mbox{ converges for every $x\in S$ and $k\geq 0$}\label{asymp2}
\end{eqnarray}
then the limit is of the form
$R^k \hat{h}(x)/\hat{h}(x_0)$ where $h$ is a $\rho$-subinvariant function; i.e. Condition [5] below holds.
\begin{itemize}\label{asymp}
\item[Condition [5]]  For some $x_0$ and  for every $x\in S$ and $k\geq 0$
\begin{eqnarray}
\frac{ P_x(\zeta>n-k)}{P_{x_0}(\zeta>n)}&\to& R^k \frac{\hat{h}(x)}{\hat{h}(x_0)}
\mbox{ where $\hat{h}$ is $\rho$-invariant.}\label{asymp1}
\end{eqnarray}
\end{itemize}

Moreover Jacka and Roberts  showed that  the measure $P^N_x$ defined by $\tilde{X}$ started at $(x,-N)$  converges to $P^{\hat{h}}_x$ as $N\to\infty$ iff Condition [5] holds.
Moreover $P^{\hat{h}}_x$ is the measure derived from the $\hat{h}$-transformed Markov chain $X^{\hat{h}}_n; n=0,\ldots, \infty$ with kernel
$K_{\hat{h}}(x,y)=R K(x,y)\hat{h}(y)/\hat{h}(x)$. $X^{\hat{h}}_n$ is the chain conditioned to live forever.
Assuming Condition [5] holds and since $X$
is $R$-transient then $X^{\hat{h}}_n$ is transient. By Theorem 4 in \cite{Dynkin} $X^{\hat{h}}_n\to X^{\hat{h}}_{\infty}$
(in the Martin exit topology) where $X^{\hat{h}}_{\infty}$ is a random variable with support on  $B$, the space of
exits in the $\rho$-Martin exit boundary.
 Let $\mu^{\hat{h}}_x$ be the distribution of $X^{\hat{h}}_{\infty}$
when $X^{\hat{h}}_0=x$.



\subsection{Yaglom limits as points in the space-time Martin boundary}\label{ST}
One way to view Yaglom limits is to define $H(x,-n)=R^nK^n(x,S)$ and remark as in \cite{Doob}  and \cite{Breyer}
that $H$ is space-time $\rho$-harmonic on $S\times (-\infty,0]$; i.e.
 $$H(x,-(n+1))=R\sum_y K(x,-(n+1);y,-n)H(y,-n).$$
 Note that $H(x,-n)=R^n P_x(\zeta>n)$.
From Condition [5] follows that $H(x,-N+k)/H(x_0,-N)\to \hat{h}(x)/\hat{h}(x_0)$.
Condition [5] is not satisfied by Kesten's $R$-transient counterexample and is vital to our proof in
the $R$-transient case.

Now define the $H$-transform of this space time chain:
\begin{eqnarray*}
\tilde{K}(x,-(n+1);y,-n)&=&K(x,-(n+1);y,-n)\frac{H(y,-n)}{\rho H(x,-(n+1))}\\
&=&K(x,y)\frac{R H(y,-n)}{H(x,-(n+1))}.
\end{eqnarray*}
Now notice
\begin{eqnarray*}
\lefteqn{\tilde{K}(x,-N;y,0)}\\
&=&\sum_{x_1,x_2,\ldots x_{N-1}}K(x,x_{N-1})\frac{R H(x_{N-1},-(N-1))}{H(x,-N)}\\
& &\cdot K(x_{N-1},x_{N-2})\frac{R H(x_{N-2},-(N-2))}{H(x,-(N-1))}\cdots
\cdots K(x_1,y)\cdots
\frac{R H(y,0)}{H(x_1,-1)}\\
&=&\frac{R^N K^N(x,y)}{H(x,-N)}=\frac{K^N(x,y)}{K^N(x,S)}
\end{eqnarray*}
using telescoping and the fact that $H(y,0)=1$ for all $y$.
We see we can interpret the Yaglom ratio as a  nonhomogeneous probability transition kernel $\tilde{K}(x,-N;y,0)$.

Also define $N=n+m$ so
\begin{eqnarray*}
\tilde{K}(x,-N;y,-m)&=&K^n(x,y)\frac{H(y,-m)}{\rho^n H(x,-N)}=K^n(x,y)\frac{K^m(y,S))}{K^N(x,S)}\\
&=&\frac{P_x(X_n=y,\zeta>N)}{P_x(\zeta>N)}.
\end{eqnarray*}

If $\pi$ is a $\rho$-invariant measure then $H\cdot \pi$ is a  left invariant measure for $\tilde{K}$:
\begin{eqnarray*}
\lefteqn{\sum_x \pi(x)H(x,-(n+1))\tilde{K}(x,-(n+1);y,-n)}\\
&=&\sum_x \pi(x)K(x,y)R H(y,-n)
=\pi(y)H(y,-n).
\end{eqnarray*}
Note that the time reversal with respect to $H\cdot \pi$ is
\begin{eqnarray*}
\overleftarrow{K}(y,-n;x,-(n+1))&=&\frac{\pi(x)H(x,-(n+1)}{\pi(y)H(y,-n)}\tilde{K}(x,-(n+1);y,-n)\\
&=&\frac{R\pi(x)}{\pi(y)}K(x,y)=\overleftarrow{K}(y,x);
\end{eqnarray*}
i.e. the time reversal of $\tilde{K}$ w.r.t. $H\cdot \pi$ has the same transition probabilities as the time reversal of $K$ with respect to $\pi$.

\subsection{Killing at one point}
\begin{itemize}\label{smallone}
\item[Condition [6]] Killing only occurs at one point $x_0$ with probability $\kappa$.
\end{itemize}

\begin{Lem}\label{onekill}
If Conditions [1,2,3,4] hold then
$$G_{z,x_0}(R)=\frac{1}{\kappa}((1-\rho)G_{z,S}(R)+\rho)\mbox{ and }\lim_{n\to\infty}\frac{K^n(x,x_0)}{K^n(x,S)}\to\frac{1-\rho}{\kappa};$$
\end{Lem}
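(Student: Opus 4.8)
The plan is to reduce both identities to the single structural consequence of Condition [6]: since killing occurs only at $x_0$ and there with probability $\kappa$, the event $\{\zeta = n+1\}$ forces $X_n = x_0$, and conversely from $x_0$ the chain is killed at the next step with probability $\kappa$. Because $K^n(x,x_0) = P_x(X_n = x_0, \zeta > n) = P_x(X_n = x_0)$ counts killed paths that remain in $S$ up to time $n$ and land at $x_0$, this yields
\[
P_x(\zeta = n+1) = \kappa\, K^n(x,x_0) \qquad (n \geq 0).
\]
Summing the associated generating function over the absorption time, and using that absorption is certain, gives $E_z R^{\zeta} = \sum_{k \geq 1} R^k \kappa K^{k-1}(z,x_0) = \kappa R\, G_{z,x_0}(R)$, which is the bridge between the killing probability and the potential at $x_0$.

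For the first identity I would combine this with the relation $G_{z,S}(R) = (E_z R^{\zeta} - 1)/(R-1)$ already established in the proof that Condition [2] implies Condition [A]. Substituting $E_z R^{\zeta} = \kappa R\, G_{z,x_0}(R)$ and solving gives $G_{z,x_0}(R) = \bigl((R-1)G_{z,S}(R) + 1\bigr)/(\kappa R)$; writing $R = 1/\rho$, so that $(R-1)/R = 1-\rho$ and $1/R = \rho$, turns the right-hand side into exactly $\tfrac{1}{\kappa}\bigl((1-\rho)G_{z,S}(R) + \rho\bigr)$.

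For the second identity I would use the telescoping increment $P_x(\zeta = n+1) = P_x(\zeta > n) - P_x(\zeta > n+1) = K^n(x,S) - K^{n+1}(x,S)$, which by the killing structure equals $\kappa K^n(x,x_0)$. Dividing by $\kappa K^n(x,S)$ gives
\[
\frac{K^n(x,x_0)}{K^n(x,S)} = \frac{1}{\kappa}\left(1 - \frac{K^{n+1}(x,S)}{K^n(x,S)}\right),
\]
and letting $n \to \infty$ with the ratio limit \eqref{us-inter-lim} of Lemma \ref{follow} produces the value $(1-\rho)/\kappa$. The computations are routine once the killing identity is in place; the only point requiring care is the legitimacy of invoking \eqref{us-inter-lim}, whose hypothesis is a finite $\rho$-excessive probability measure. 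Under Conditions [1,2,3,4] this is supplied by Condition [A] (the measure $\chi(z,\cdot) = G_{z,\cdot}(R)/s(z)$ serving as $\mu$), so the application is valid and there is no genuine obstacle beyond correctly translating Condition [6] into the relation $P_x(\zeta = n+1) = \kappa K^n(x,x_0)$.
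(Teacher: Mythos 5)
Your proposal is correct and takes essentially the same route as the paper: both arguments rest on the single identity $K^{n+1}(z,S)=K^n(z,S)-\kappa K^n(z,x_0)$ (your relation $P_z(\zeta=n+1)=\kappa K^n(z,x_0)$ is just its telescoped form), summed against $R^n$ to get the potential identity and divided by $K^n(x,S)$ together with Lemma~\ref{follow} to get the limit. Your detour through $E_z R^{\zeta}=\kappa R\,G_{z,x_0}(R)$ and the previously established formula $G_{z,S}(R)=(E_zR^{\zeta}-1)/(R-1)$ is an equivalent repackaging of the paper's direct summation, and your added justification for invoking \eqref{us-inter-lim} (Condition [2] supplying the $\rho$-excessive probability via Condition [A]) is sound and in fact more careful than the paper's one-line citation.
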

\begin{proof}
If killing occurs only at $x_0$ with probability $\kappa$ then
$$K^{n+1}(z,S)=K^n(z,S)-K^n(z,x_0)\kappa.$$
Multiplying by $R^n$ and summing on $n$ from $0$ to $\infty$ we get
$$R^{-1}(G_{z,S}(R)-1)=G_{z,S}(R)-G_{z,x_0}(R)\kappa$$ which yields the first result.

Alternatively, dividing by $K^n(x,S)$  and using Condition [1] and Lemma \ref{follow} we get
$$\lim_{n\to\infty}\frac{K^n(x,x_0)}{K^n(x,S)}\to\frac{1-\rho}{\kappa}.$$
\end{proof}
Taking subsequences of $K^{n}(x,\cdot)/K^{n}(x,S)$ we can find quasistationary limits.
If one such limit give $\psi$ we see $\psi(0)=(1-\rho)/\kappa$.

Moreover, with local killing only at $x_0$,
\begin{eqnarray}\label{localkill}
\frac{K^n(x,x_0)}{K^n(y,x_0)}&=&
\frac{K^n(x,S)}{K^n(y,S)}\frac{\left(1-K^{n+1}(x,S)/K^n(x,S)\right)}
{\left(1-K^{n+1}(y,S)/K^n(y,S)\right)}\sim \frac{K^n(x,S)}{K^n(y,S)}
\end{eqnarray}
where we used  Condition [1].
If either of the above limits exist then the Jacka-Roberts Condition [5]
holds and the limit is
$\hat{h}(x)/\hat{h}(y)$.
Hence, $\frac{h_{x_0}(x)}{h_{x_0}(y)}=\frac{\hat{h}(x)}{\hat{h}(y)}$.
This was observed by calculation in the example in \cite{McFoley}.

\subsection{Outline of a proof}\label{outline}

  Starting from $(x,-N)$, the process $\tilde{X}$ behaves like $X^{\hat{h}}$;
i.e. converges to $b$ with probability $\mu_x^{\hat{h}}(b)$.  We can try to couple $X^{\hat{h}}$ at a time $\tau_N$ with a point on
any fixed Orey path $D_b=\{(\overleftarrow{X}_m,-m):m\geq 0\}$ given by the time reversal with respect to $\pi_b$. Note along $(y_m,-m)\in D_b$, $y_m\to b$ and
 $K^m(y_m,\cdot)/K^m(y_m,S)\to \pi_b(\cdot)$.

If we can show this then
\begin{eqnarray*}
\lefteqn{\tilde{K}^N(x,-N;y,0)}\\
&=&\sum_{z}\sum_{k=1}^N P_{(x,-N)}(\tilde{X}_k=z,\tau_N=k)P_{(z,k-N)}(\tilde{X}_{N-k}=y)\\
&\approx&\sum_{z}\sum_{k=1}^N P_{(x,-N)}(X^{\hat{h}}_k=z,\tau_N=k)P_{(z,k-N)}(\tilde{X}_{N-k}=y)\\
&\approx&\sum_{b\in E} P_{x}(X^{\hat{h}}\to b)\pi_b(y)\\
&\approx& \sum_{b\in B} \mu_x^{\hat{h}}(b)\pi_b(y)=\pi_x(y).
\end{eqnarray*}
Unfortunately we can't hope that $\tilde{X}$ starting from $(x,-N)$ will follow $X^{\hat{h}}$ up to the intersection with the path
$(\overleftarrow{X}_m,-m)$. We need to know that other sequences $(x_m,-m)$ such that $x_m\to b$ slowly enough
are also Orey paths. However it does seem there may exist an elegant general coupling proof
along these lines; perhaps inside the Martin compactification of space-time. We couldn't do it and in the next section give the proof
with the assumption $S=\mathbb{Z}$ and that transitions are nearest neighbour.
This is undoubtedly much too strong.

\section{Restriction to nearest neighbour random walks}\label{special}
We now assume $S=\mathbb{Z}$ and that transitions are nearest neighbour. Assume the killing set is at a single point $x_0$ which can be taken to be $0$ so Condition [6] holds.  Let $K(x,x+1)=p_x>0$ and $K(x,x)=r_x$ and $K(x,x-1)=q_x>0$ so the chain is
irreducible. We assume $p_x,r_x,q_x$ are such that the chain is $R$-transient and Conditions [1,2,3] hold.

Finally we add a condition which is much too strong but simplifies our proof.
\begin{itemize}\label{dumb}
\item[Condition [7]] $r_x\geq 1/2$ for all $x$.
\end{itemize}
For any nearest neighbour random walk $W$ with kernel $Q$ Condition [7]
implies that $Q^n(x,\cdot)$ is stochastically smaller than $Q^n(x+1,\cdot)$
for all $x$. To see this take two copies of the chain $W_k$ and $W^*_k$
starting from $x$ and $x+1$ respectively
which evolve independently except when they are one apart. When this happens
$W_k=z$ and $W^*_k=z+1$ for some $z$. On the next step we couple the chains together at $z$
with probability $q_{z+1}$ (which is necessarily less than or equal to $s_z$)
or at $z+1$ with probability $p_z$ (which is necessarily less  than or equal to $s_{z+1}$).
The paths of the resulting coupled chains are always separated by $1$ or else
are coupled together so stochastic monotonicity holds.

Under Condition [2] the Martin entrance kernel $\chi(x,y):=\frac{G_{x,y}(R)}{G_{x,S}(R)}$ exists since Condition [A] holds where we have used the reference function $1$. Using Condition [3] Condition [B] holds
so as $x_n$ tends to $+\infty$ or $-\infty$ the sequence of probability measures $\chi(x_n,\cdot)$
is tight.
Hence the cone of $\rho$-invariant probabilities cannot be empty.
A point $\alpha$ in the Martin boundary
is either in $\{+\infty\}$ which is the Martin-closure of sequences $\{x_n\}$ which tend to $+\infty$  or in $\{-\infty\}$ which is the Martin-closure of sequences $\{x_n\}$ which tend to $+\infty$. Neither $\{+\infty\}$ nor $\{-\infty\}$ can be empty.

We could equally well have defined the entrance kernel
$$\chi_0(x,y):=\frac{G_{x,y}(R)}{G_{x,x_0}(R)}.$$
By Lemma \ref{onekill}, $G_{x,x_0}(R)/G_{x,S}(R)\to (1-\rho)/\kappa$ as $x$ goes to plus
or minus infinity (which forces $G_{x,S}(R)\to\infty$). Hence the $\rho$-Martin entrance boundary
of $\chi_0$ is the same as that of $\chi$ up to a multiple.

Suppose $\alpha$ is a minimal point in $\{+\infty\}$ associated with the extremal $\rho$-invariant
probability $\pi_{\alpha}$. Let $\overleftarrow{X}^{\alpha}$ represent the time reversal with respect $\pi_{\alpha}$ to  which converges
almost surely to $\alpha$ in the Martin entrance topology;
i.e. $$G_R(x_n,y)/G_R(x_n,0)\to \pi_{\alpha}(y)/\pi_{\alpha}(0).$$ Moreover the tail field is trivial
so $\overleftarrow{X}^{\alpha}$ is a $1$-transient Markov chain which diverges almost surely to either plus or minus infinity in the point set topology.
Suppose the latter is true then, as $x_n\to\infty$,
\begin{eqnarray*}
\frac{G_R(x_n,y)}{G_R(x_n,0)}&=&
\frac{\pi_{\alpha}(y)}{\pi_{\alpha}(0)}\frac{\overleftarrow{G}_{\alpha}(y,x_n)}
{\overleftarrow{G}_{\alpha}(0,x_n)}
=\frac{\pi_{\alpha}(y)}{\pi_{\alpha}(0)}\frac{\overleftarrow{A}_{\alpha}(y,x_n)}
{\overleftarrow{A}_{\alpha}(0,x_n)}
\end{eqnarray*}
where $\overleftarrow{A}_{\alpha}(y,x)$ is  the probability $\overleftarrow{X}^{\alpha}$ reaches $x$ from $y$.
If $\overleftarrow{X}^{\alpha}$ were transient to $-\infty$ then for $y<0$, $\overleftarrow{A}_{\alpha}(y,x_n)/\overleftarrow{A}_{\alpha}(0,x_n)$ can't go to $1$
because there is a nonzero probability $\overleftarrow{X}^{\alpha}$ leaves $y$ and diverges to $-\infty$ before hitting $0$.
This is a contradiction.
We conclude $\overleftarrow{X}^{\alpha}$ diverges to $+\infty$ almost surely.

Now consider another sequence $z_n\to \beta\in \{+\infty\}$. Therefore, for $y<0$,
\begin{eqnarray*}
\frac{G_R(z_n,y)}{G_R(z_n,0)}&=&\frac{\pi_{\alpha}(y)}{\pi_{\alpha}(0)}
\frac{\overleftarrow{G}_{\alpha}(y,z_n)}{\overleftarrow{G}_{\alpha}(0,z_n)}\\
&=&\frac{\pi_{\alpha}(y)}{\pi_{\alpha}(0)}\sum_{k=1}^{\infty}f_{y0}(k)
\frac{\overleftarrow{G}_{\alpha}(0,z_n)}{\overleftarrow{G}_{\alpha}(0,z_n)}
=\frac{\pi_{\alpha}(y)}{\pi_{\alpha}(0)}
\end{eqnarray*}
where $f_{x0}(k)$ is the probability the first passage time from $y$ to $0$ by $X^{\alpha}$ is $k$ and
since $X^{\alpha}\to\infty$, $\sum_{k=1}^{\infty}f^h_{x0}(k)=1$.
A similar result holds if $y\geq 0$.
We conclude $\{+\infty\}$ consists of at most a single minimal point associated with an extremal
we denote by $\pi^0_{+\infty}$. Similarly $\{-\infty\}$ consists of a single point associated with $\pi^0_{-\infty}$.

Note $\pi^0_{-\infty}$ can't be $\pi^0_{+\infty}$. Consider a sequence $w_{n}\to\{-\infty\}$; i.e.
$$w_n\to -\infty\mbox{ and } \frac{G_R(w_n,y)}{G_R(w_n,0)}\to \frac{\pi^0_{-\infty}(y)}{\pi^0_{-\infty}(0)}.$$
Taking $\alpha=\pi^{0}_{+\infty}$,
$$\frac{G_R(w_n,y)}{G_R(w_n,0)}=\frac{\alpha(y)}{\alpha(0)}
\frac{\overleftarrow{A}_{\alpha}(y,w_n)}{\overleftarrow{A}_{\alpha}(0,w_n)}$$
 and $X^{\alpha}\to +\infty$ so if $y>0$ $\overleftarrow{A}_{\alpha}(y,w_n)=\overleftarrow{A}_{\alpha}(y,0)\overleftarrow{A}_{\alpha}(0,w_n)$
and $\overleftarrow{A}_{\alpha}(y,0)$ can't equal $1$ because $X^{\alpha}$ is $1$-transient
to $+\infty$. Hence $G_R(w_n,y)/G_R(w_n,0)$ does not converge to $\pi_{+\infty}(y)/\pi_{+\infty}(0)$
so $\pi_{+\infty}\neq \pi_{-\infty}$.

Using Orey's Theorem we have shown that almost surely,
$$\frac{K^N(\overleftarrow{X}^{\pi_{-\infty}}_N,y)}
{K^N(\overleftarrow{X}^{\pi_{-\infty}}_N,S)}\to\pi_{-\infty}(y)\mbox{ and }
\frac{K^N(\overleftarrow{X}^{\pi_{+\infty}}_N,y)}
{K^N(\overleftarrow{X}^{\pi_{+\infty}}_N,S)}\to\pi_{-\infty}(y)$$
where $\overleftarrow{X}^{\pi_{-\infty}}_N\to -\infty$
and $\overleftarrow{X}^{\pi_{+\infty}}_N\to +\infty$ almost surely.
Hence, from some $N$ onward,
$K^N(\overleftarrow{X}^{\pi_{+\infty}}_N,\cdot )/K^N(\overleftarrow{X}^{\pi_{+\infty}}_N,S)$
 dominates
$K^N(\overleftarrow{X}^{\pi_{-\infty}}_N,\cdot)
/K^N(\overleftarrow{X}^{\pi_{-\infty}}_N,S)$ stochastically.
We conclude under Condition [7] that $\pi_{+\infty}$ is stochastically bigger than $\pi_{-\infty}$.
Since any $\rho$-variant probability $\pi$ can be represented as a
 convex combination of the extremals:
$\pi=\alpha_{-}\pi_{-\infty}+\alpha_{+}\pi_{+\infty}$ by (73) in \cite{Dynkin}
we conclude all $\rho$-variant probabilities are stochastically bounded
above and below by $\pi_{+\infty}$ and $\pi_{-\infty}$ respectively.

To summarize
\begin{Pro}\label{extremes}
There are exactly two minimal points in the $\rho$-Martin entrance boundary corresponding to the measures
$\pi^0_{-\infty}$ and $\pi^0_{+\infty}$ corresponding to the geometric boundary  $\{-\infty, +\infty\}$.
If we renormalize these measures to probabilities we get $\pi_{-\infty}$ and $\pi_{+\infty}$. $\pi_{-\infty}$ and $\pi_{+\infty}$ are also extremal in the
sense that they are stochastically the smallest and largest $\rho$-invariant probabilities.
\end{Pro}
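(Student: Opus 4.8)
The plan is to assemble the arguments developed throughout this subsection into the three assertions of the proposition: exactly two minimal points, their identification with the geometric ends $\{-\infty\}$ and $\{+\infty\}$, and stochastic extremality. First I would record nonemptiness and the dichotomy. Condition [A] (which holds under Condition [2]) gives the entrance kernel $\chi$, and Proposition \ref{yaglomtight} gives Condition [B], so $\chi(x_n,\cdot)$ is tight as $x_n\to\pm\infty$ and the cone of $\rho$-invariant probabilities is nonempty; since the walk is nearest neighbour on $\ZZ$, every boundary point lies in $\{+\infty\}$ or in $\{-\infty\}$, with neither half empty.

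Next I would show each half contains exactly one minimal point. Fix a minimal $\alpha\in\{+\infty\}$ with extremal $\pi_\alpha$. By Lemma \ref{trivialtail} and the ensuing discussion, the time reversal $\overleftarrow{X}^\alpha$ with respect to $\pi_\alpha$ has trivial tail field and is a $1$-transient nearest-neighbour chain, hence diverges almost surely to $+\infty$ or to $-\infty$ in the point-set topology. The hitting-probability argument already given rules out $-\infty$: for $y<0$ the ratio $\overleftarrow{A}_\alpha(y,x_n)/\overleftarrow{A}_\alpha(0,x_n)$ cannot tend to $1$ if $\overleftarrow{X}^\alpha$ reaches $-\infty$ with positive probability before hitting $0$. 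Having fixed $\overleftarrow{X}^\alpha\to+\infty$ a.s., for any other sequence $z_n\to\beta\in\{+\infty\}$ I would decompose $\overleftarrow{G}_\alpha(y,z_n)$ over the first passage from $y$ to $0$; since those first-passage probabilities sum to $1$, the limit of $G_R(z_n,y)/G_R(z_n,0)$ is forced to equal $\pi_\alpha(y)/\pi_\alpha(0)$, so $\beta$ yields the same extremal. Thus $\{+\infty\}$ reduces to a single minimal point $\pi^0_{+\infty}$, and symmetrically $\{-\infty\}$ to $\pi^0_{-\infty}$. That the two differ follows from running the same hitting argument with $\alpha=\pi^0_{+\infty}$ along $w_n\to-\infty$: for $y>0$ the factorization $\overleftarrow{A}_\alpha(y,w_n)=\overleftarrow{A}_\alpha(y,0)\overleftarrow{A}_\alpha(0,w_n)$ with $\overleftarrow{A}_\alpha(y,0)<1$ (transience to $+\infty$) prevents the limit from equalling $\pi_{+\infty}(y)/\pi_{+\infty}(0)$, whence $\pi^0_{-\infty}\neq\pi^0_{+\infty}$.

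For the extremality statement I would invoke Orey's Theorem \ref{OreyCnvgThm1} applied to the two reversals: along the Orey paths $\overleftarrow{X}^{\pi_{+\infty}}_N\to+\infty$ and $\overleftarrow{X}^{\pi_{-\infty}}_N\to-\infty$ the Yaglom ratios $K^N(\cdot,y)/K^N(\cdot,S)$ converge almost surely to $\pi_{+\infty}(y)$ and $\pi_{-\infty}(y)$. Condition [7], through the coupling that makes $K^N(x,\cdot)/K^N(x,S)$ stochastically increasing in the starting point $x$, then forces the limit along the $+\infty$ path to dominate the one along the $-\infty$ path, so $\pi_{-\infty}\preceq\pi_{+\infty}$. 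Finally, the integral representation (73) in \cite{Dynkin} writes every $\rho$-invariant probability as $\alpha_-\pi_{-\infty}+\alpha_+\pi_{+\infty}$, so all such probabilities are stochastically sandwiched between $\pi_{-\infty}$ and $\pi_{+\infty}$, giving their extremality.

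The main obstacle is the uniqueness step inside each half: it hinges on knowing that the $1$-transient reversed chain drifts to a definite end of $\ZZ$ and that the associated first-passage probabilities sum to one, so the Martin-kernel limit is independent of the approaching sequence. This is precisely where triviality of the tail field (Lemma \ref{trivialtail}) and the nearest-neighbour structure are indispensable; by contrast, the stochastic-domination conclusion is comparatively routine once the almost-sure convergence along the Orey paths and the monotonicity from Condition [7] are in hand.
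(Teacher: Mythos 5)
Your proposal reproduces the paper's own argument essentially step for step: the same tightness/nonemptiness observation via Conditions [2] and [3], the same time-reversal and first-passage (hitting-probability) argument showing each geometric end carries exactly one minimal point and that the two resulting extremals differ, and the same combination of Orey's Theorem \ref{OreyCnvgThm1} along the two reversed paths, the Condition [7] monotone coupling, and the representation (73) of \cite{Dynkin} for the stochastic sandwich. The proof is correct and is the paper's proof.
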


We can do a similar analysis of the $\rho$-Martin exit kernel $$k(x,y):=\frac{G_{x,y}(R)}{G_{x_0,y}(R)}.$$
As $y_n\to+\infty$, $k(x,y_n)\to h_{+\infty}(x)$ and as $y_n\to -\infty$, $k(x,y_n)\to h_{-\infty}(x)$
where $h_{+\infty}$ and $h_{-\infty}$ are the two extremals in the $\rho$-Martin exit boundary.
We conclude that we can identify the $\rho$-Martin exit boundary and the $\rho$-Martin entrance boundary with
the geometric boundary $\{-\infty, +\infty\}$. Hence Condition [4] holds.

 Even though $K$ is substochastic at $0$ we can still
 define a reversibility measure $\gamma$. For $x>0$ define
 $$\gamma(x)=\prod_{k=1}^x (p_{k-1}/q_k), \gamma(-x)=\prod_{k=-1}^{-x} (q_{k+1}/p_k),\gamma(0)=1.$$
 It is easy to check that $\gamma(x) K(x,y)=\gamma(y)K(y,x)$ for all $x,y$.

 It is easy to check that the mapping $h(x) \to \pi(x)=h(x)\gamma(x)$ provides a duality between $\rho$-harmonic functions $h$
 and $\rho$-invariant measures $\pi$. In particular this duality maps $h_{+\infty}$ to $\pi^0_{+\infty}$
 and $h_{-\infty}$ to $\pi^0_{-\infty}$. This follows because the time reversal with respect to
 the $\rho$-invariant measure
 $\gamma(x)h_{+\infty}$ is
 $$\frac{\gamma(y)h_{+\infty}(y)}{\rho \gamma(x)h_{+\infty}(x)}K(y,x)
 =\frac{\gamma(x)h_{+\infty}(y)}{\rho \gamma(x)h_{+\infty}(x)}K(x,y)
 =K(x,y)\frac{h_{+\infty}(y)}{\rho h_{+\infty}(x)}.$$
 This $h_{+\infty}$-transform of $K$ drifts to $+\infty$ so
 the time reversal does as well. This means $\gamma h_{+\infty}$ is an extremal associated with $+\infty$ and that means it is equal to $\pi^0_{+\infty}$.
 The same argument gives $\gamma h_{-\infty}=\pi^0_{-\infty}$.

 Let $A_{+}(0,y)$ denote the probability the $h_{+\infty}$-transform
 starting at $0$ reaches $y$ which is of course $1$. Let $A_{-}(0,y)$ denote the probability the $h_{-\infty}$-transform
 starting at $0$ reaches $y$ which tends to $0$ as $y\to\infty$. Hence
 $$1=A_{+}(0,y)=A_{-}(0,y)\frac{h_{+\infty}(y)}{h_{+\infty}(y)}$$
 so $\lim_{y\to +\infty}\frac{h_{-\infty}(y)}{h_{+\infty}(y)}=0.$
 Similarly $\lim_{y\to -\infty}\frac{h_{+\infty}(y)}{h_{-\infty}(y)}=0.$

To summarize
\begin{Pro}\label{extremes2}
There are exactly two minimal points in the $\rho$-Martin exit boundary corresponding to the $\rho$-harmonic functions
$h_{-\infty}$ and $h_{+\infty}$ corresponding to the geometric boundary  $\{-\infty, +\infty\}$. Moreover
$$\lim_{y\to +\infty}\frac{h_{-\infty}(y)}{h_{+\infty}(y)}=0\mbox{ and }
\lim_{y\to -\infty}\frac{h_{+\infty}(y)}{h_{-\infty}(y)}=0.$$
Also any $\rho$-harmonic function $h$ satisfies the relative Fatou theorem
relative to $h_{+\infty}$ and $h_{-\infty}$
\end{Pro}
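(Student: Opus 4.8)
The plan is to read off the first two assertions from the discussion just given and to concentrate the real effort on the relative Fatou claim. For the first assertion I would invoke the duality $h\mapsto\gamma h$ established above: it carries extremal $\rho$-harmonic functions to extremal $\rho$-invariant measures, and Proposition \ref{extremes} supplies exactly two of the latter, namely $\pi^0_{+\infty}$ and $\pi^0_{-\infty}$; hence there are exactly two of the former, namely $h_{+\infty}$ and $h_{-\infty}$. For the limits I would reuse the hitting-probability computation already sketched: since the $h_{+\infty}$-transform is a nearest-neighbour walk drifting to $+\infty$ it reaches every $y>0$, so $A_{+}(0,y)=1$, while the $h_{-\infty}$-transform drifts to $-\infty$, so $A_{-}(0,y)\to 0$ as $y\to+\infty$; taking the ratio gives $h_{-\infty}(y)/h_{+\infty}(y)=A_{-}(0,y)\to 0$, and symmetrically at $-\infty$.

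For the relative Fatou property I would exploit the nearest-neighbour structure. First I would observe that, because transitions are nearest neighbour, the harmonicity equation $h(x)=R\bigl(p_x h(x+1)+r_x h(x)+q_x h(x-1)\bigr)$ is a second-order linear recurrence, so the space of $\rho$-harmonic functions on $\ZZ$ is two-dimensional (a harmonic $h$ is pinned down by $(h(0),h(1))$). The ratio limits just obtained show $h_{+\infty}$ and $h_{-\infty}$ are linearly independent, hence a basis, and by the Martin representation (Theorem 11 in \cite{Dynkin}) with a two-point minimal exit boundary every nonnegative $\rho$-harmonic function collapses to a finite combination
$$h=c_{+}\,h_{+\infty}+c_{-}\,h_{-\infty},\qquad c_{+},c_{-}\ge 0.$$

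Granting this representation, the relative Fatou property is a one-line consequence. For any sequence $z_n\to+\infty$ in the Martin topology I would write
$$\frac{h(z_n)}{h_{+\infty}(z_n)}=c_{+}+c_{-}\,\frac{h_{-\infty}(z_n)}{h_{+\infty}(z_n)}\longrightarrow c_{+},$$
using the second assertion, so that $\alpha_{+\infty}=c_{+}$ depends only on $h$ and not on the chosen sequence; the symmetric computation gives $h(z_n)/h_{-\infty}(z_n)\to c_{-}=\alpha_{-\infty}$ as $z_n\to-\infty$. The one genuinely load-bearing step is the nonnegative representation $h=c_{+}h_{+\infty}+c_{-}h_{-\infty}$: this is where I expect the main obstacle to sit, since it requires combining the two-dimensionality of the nearest-neighbour recurrence with the identification of $h_{+\infty}$ and $h_{-\infty}$ as exactly the two extreme rays of the cone of nonnegative $\rho$-harmonic functions.
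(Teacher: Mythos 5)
Your proposal is correct, and its skeleton coincides with the paper's: the first two assertions are read off from the preceding discussion (the duality $h\mapsto \gamma h$ together with Proposition \ref{extremes}, and the hitting probabilities $A_{+}(0,y)=1$, $A_{-}(0,y)\to 0$), and the relative Fatou property follows from writing $h=\alpha_{-}h_{-\infty}+\alpha_{+}h_{+\infty}$ and dividing by $h_{\pm\infty}$ --- which is literally the paper's proof, with (61) of \cite{Dynkin} cited for the decomposition. The one place you add something is the justification of the decomposition, and there your instinct about where the difficulty lies is backwards. You flag the \emph{nonnegative} representation $h=c_{+}h_{+\infty}+c_{-}h_{-\infty}$, $c_{\pm}\ge 0$, as the load-bearing step and the expected obstacle, but nonnegativity of the coefficients is never used in the limit computation: your own observation that $\rho$-harmonicity is a second-order linear recurrence (with $p_x,q_x>0$, so a solution is determined by its values at two consecutive integers) shows that the space of \emph{signed} $\rho$-harmonic functions is two-dimensional, the ratio limits show $h_{+\infty}$ and $h_{-\infty}$ are linearly independent (a constant positive ratio cannot tend to $0$), hence $h=c_{+}h_{+\infty}+c_{-}h_{-\infty}$ with real $c_{\pm}$, and then $h(z_n)/h_{+\infty}(z_n)=c_{+}+c_{-}\,h_{-\infty}(z_n)/h_{+\infty}(z_n)\to c_{+}$ exactly as you wrote, independently of the sequence; nonnegativity of $c_{\pm}$ then comes out for free, since each is a limit of ratios of nonnegative functions. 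So the elementary half of your argument is self-sufficient and makes the appeal to Dynkin's representation theorem dispensable in this nearest-neighbour setting, whereas the paper's proof rests entirely on that citation; the trade-off is that the paper's argument would survive in any situation with a two-point minimal exit boundary, while the recurrence argument is tied to the tridiagonal structure of $K$.
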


\begin{proof}
We have established everything except the last statement.
By (61) in \cite{Dynkin} $h$ is a convex combinations of the extremals; i.e.
$h= \alpha_{-}h_{-\infty}+\alpha_{+}h_{+\infty}$. Hence
$$\lim_{y\to +\infty}\frac{h(y)}{h_{+\infty}(y)}
=\lim_{y\to +\infty}\frac{\alpha_{-}h_{-\infty}(y)+\alpha_{+}h_{+\infty}(y)}{h_{+\infty}(y)}
=\alpha_{+}.$$
Similarly
$$\lim_{y\to -\infty}\frac{h(y)}{h_{-\infty}(y)}
=\alpha_{-}.$$
\end{proof}

Condition [5] requires we check $K^n(x,S)/K^n(x_0,S)\to \hat{h}(x)/\hat{h}(x_0)$.
Lemma \ref{onekill} shows this is equivalent to checking
$K^n(x,x_0)/K^n(x_0,x_0)\to \hat{h}(x)/\hat{h}(x_0)$.
A consequence of Condition [5] and Lemma \ref{onekill} (recall $x_0=0$) is
\begin{eqnarray}
\frac{K^n(0,x)}{K^n(0,S)}&=&\frac{\gamma(x)}{\gamma(0)}\frac{K^n(x,0)}{K^n(0,S)}
\sim \frac{\gamma(x)}{\gamma(0)}\frac{K^n(x,S)}{K^n(0,S)}\frac{1-\rho}{\kappa}
\nonumber\\
&\sim & \frac{\gamma(x)}{\gamma(0)}\frac{\hat{h}(x)}{\hat{h}(0)}\frac{1-\rho}{\kappa}
\label{atzero}
\end{eqnarray}
This means the tight sequence of probabilities $K^n(0,x)/K^n(0,S)$ converges  to a probability proportional
to $\gamma(x)\hat{h}(x)$. Since a Yaglom limit starting from $0$ fails in Kesten's counterexample \cite{Kesten} this means the Jacka-Roberts Condition [5] also does not hold  in his example.

\subsection{Main result}
Recall the space-time kernel $\tilde{K}$ on $S\times (-\infty,0]$ defined in Subsection \ref{ST}.
The measure $P^N_x$ defined by $(\tilde{X}_0,\tilde{X}_1,\ldots,\tilde{X}_N)$ started at $(x,-N)$
  where $N=n+m$.  Since the  Jacka and Roberts Condition [5] holds, for any fixed $m$ as $N\to\infty$,
 $P^N_x$    converges to $P^{\hat{h}}_x$
 restricted to coordinates $0$ through $m$
where $P^{\hat{h}}_x$ is the measure derived from the $\hat{h}$-transformed Markov chain $X^{\hat{h}}_n; n=0,\ldots$ with kernel
$K_{\hat{h}}(x,y)=R K(x,y)\hat{h}(y)/\hat{h}(x)$. $X^{\hat{h}}$ is the chain conditioned to live forever.
Since $X$
is $R$-transient then $X^{\hat{h}}_n$ is transient.

By Theorem 4 in \cite{Dynkin} $X^{\hat{h}}_n\to X^{\hat{h}}_{\infty}$
(in the Martin exit topology) where $X^{\hat{h}}_{\infty}$ is a random variable with support on  $B=\{-\infty,+\infty\}$, the space of
exits in the $\rho$-Martin exit boundary.
 Let $\mu^{\hat{h}}_x$ be the distribution of $X^{\hat{h}}_{\infty}$
when $X^{\hat{h}}_0=x$.

\begin{Thm}\label{main}
If an irreducible, aperiodic $R$-transient chain on $\mathbb{Z}$  satisfies Conditions [1,2,3,5,6,7] with killing set $x_0=0$
 then
$$\frac{K^N(x,y)}{K^N(x,S)}\to \mu_x^{\hat{h}}(-\infty)\pi_{-\infty}(y)
+\mu_x^{\hat{h}}(+\infty)\pi_{+\infty}(y)=\pi_x(y).$$
\end{Thm}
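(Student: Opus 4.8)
The plan is to show that every subsequential weak limit of the tight family $K^N(x,\cdot)/K^N(x,S)$ coincides with the asserted mixture; since the candidate limit is then independent of the subsequence, the full Yaglom limit exists and equals $\pi_x$. Tightness is supplied by Proposition \ref{space-time-tightness}. First I would observe that any subsequential limit $\nu$ is a $\rho$-invariant probability: this is Proposition \ref{produceit} with $\theta=\rho$, whose hypothesis (\ref{oreysequence}) holds for the fixed starting point $x$ by Lemma \ref{follow}. By Proposition \ref{extremes} and the integral representation (73) of \cite{Dynkin}, the only extremal $\rho$-invariant probabilities are $\pi_{-\infty}$ and $\pi_{+\infty}$, so $\nu=\alpha_-\pi_{-\infty}+\alpha_+\pi_{+\infty}$ with $\alpha_\pm\ge 0$, $\alpha_-+\alpha_+=1$. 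The whole problem thus reduces to identifying the weights as $\alpha_\pm=\mu_x^{\hat h}(\pm\infty)$.

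To extract the weights I would use the space-time representation $\tilde K(x,-N;y,0)=K^N(x,y)/K^N(x,S)$ of Subsection \ref{ST} and the strong Markov property at $\tau=\tau_L$, the first exit of $\tilde X$ from $[-L,L]$, giving
\[
\frac{K^N(x,y)}{K^N(x,S)}=E^N_x\!\left[\frac{K^{N-\tau}(\tilde X_\tau,y)}{K^{N-\tau}(\tilde X_\tau,S)}\right].
\]
Since Condition [5] holds, $P^N_x$ converges on each fixed initial segment to the law $P^{\hat h}_x$ of the transient $\hat h$-transform $X^{\hat h}$, which exits $[-L,L]$ at an a.s.\ finite time on the $+$ (resp.\ $-$) side with a probability that tends, as $L\to\infty$, to $\mu_x^{\hat h}(+\infty)$ (resp.\ $\mu_x^{\hat h}(-\infty)$), because $X^{\hat h}_n\to X^{\hat h}_\infty\in\{-\infty,+\infty\}$ (Theorem 4 of \cite{Dynkin}). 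For nearest-neighbour steps $\tilde X_\tau=\pm(L+1)$, so letting $N\to\infty$ along the subsequence (refined so the inner ratios also converge) and then $L\to\infty$ rewrites $\nu$ as a mixture $q_L^+\,\nu_{L+1}+q_L^-\,\nu_{-(L+1)}$ with $q_L^\pm\to\mu_x^{\hat h}(\pm\infty)$, where $\nu_{\pm(L+1)}$ are (subsequential) Yaglom limits started from the fixed points $\pm(L+1)$.

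It then remains to prove the far-field statement $\nu_w\to\pi_{+\infty}$ as $w\to+\infty$ and $\nu_w\to\pi_{-\infty}$ as $w\to-\infty$. Here I would invoke Condition [7]: the coupling described after its statement shows the conditional laws $K^m(w,\cdot)/K^m(w,S)$ are stochastically increasing in $w$ (the monotonicity already used to derive Proposition \ref{extremes}), so the $\rho$-invariant limits $\nu_w$ increase stochastically to some $\nu_\infty\preceq\pi_{+\infty}$, the largest extremal. For the matching lower bound I would use the Orey path: by (\ref{escape1}) along $\overleftarrow{X}^{\pi_{+\infty}}$ we have $a_m:=\overleftarrow{X}^{\pi_{+\infty}}_m\to+\infty$ with $K^m(a_m,\cdot)/K^m(a_m,S)\to\pi_{+\infty}$, and fixing one such realization yields a deterministic sequence $a_m\to+\infty$ realizing $\pi_{+\infty}$. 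Stochastic monotonicity then forces any limit of $K^m(z_m,\cdot)/K^m(z_m,S)$ with $z_m\ge a_m$ to dominate $\pi_{+\infty}$; since every such limit is $\rho$-invariant, hence $\preceq\pi_{+\infty}$, it must equal $\pi_{+\infty}$. Feeding this back gives $\nu=\mu_x^{\hat h}(+\infty)\pi_{+\infty}+\mu_x^{\hat h}(-\infty)\pi_{-\infty}$, as claimed.

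The hardest point is the reconciliation flagged in Subsection \ref{outline}. The forward approximation $\tilde X\approx X^{\hat h}$ is controlled only on fixed initial segments, whereas the Orey-path description governs the boundary at times-to-go $m\to\infty$; making the two regimes meet amounts to controlling the process over a window growing with $N$, equivalently to showing that sequences $z_m\to+\infty$ \emph{slowly enough} are still Orey paths. This is exactly where Condition [7] is indispensable: stochastic monotonicity squeezes the unknown far-field Yaglom limits between an Orey-path limit and the stochastically extremal $\pi_{\pm\infty}$, bypassing an explicit coupling of $X^{\hat h}$ with a fixed Orey path. I expect the delicate bookkeeping to lie in matching the escape rate of $\tilde X_\tau$ with the growth of the comparison sequence $a_m$ so that the monotone sandwich applies, with the fixed-$x$ normalisations furnished by Lemma \ref{follow} and Lemma \ref{onekill}.
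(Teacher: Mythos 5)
Your overall architecture matches the paper's proof of Theorem \ref{main}: tightness, $\rho$-invariance of subsequential limits, a decomposition of $\tilde K(x,-N;\cdot,0)$ that uses the Jacka--Roberts Condition [5] to replace $\tilde X$ by the transient $\hat h$-transform on a fixed initial segment, and then a ``far-field'' input asserting that Yaglom ratios started far to the right (left) are close to $\pi_{+\infty}$ ($\pi_{-\infty}$). The genuine gap is in your proof of that far-field input, which is the heart of the matter (the paper's Lemma \ref{farout}): your sandwich does not close, because both of your bounds point the same way. Your exit-time decomposition produces only \emph{fixed} starting points $\pm(L+1)$ with time-to-go tending to infinity, and for a fixed starting point $w$ the Orey path satisfies $a_N\to+\infty$, so eventually $a_N\ge w$ and Condition [7] monotonicity gives
\[
K^N(w,\cdot)/K^N(w,S)\;\preceq\;K^N(a_N,\cdot)/K^N(a_N,S)\;\longrightarrow\;\pi_{+\infty},
\]
an \emph{upper} bound $\nu_w\preceq\pi_{+\infty}$ --- the same conclusion as the extremality bound, not the matching lower bound. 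Your claimed lower bound applies only to limits along sequences $z_m\ge a_m$, i.e.\ starting points growing at least as fast as the Orey path, and no such sequence ever arises from your decomposition. Nor can this be patched by ``delicate bookkeeping'': if the theorem is true, then for each fixed $w$ the Yaglom limit is the mixture $\pi_w=\mu_w^{\hat h}(+\infty)\pi_{+\infty}+\mu_w^{\hat h}(-\infty)\pi_{-\infty}\neq\pi_{+\infty}$, so no comparison against a single escaping sequence can force $\nu_w\succeq\pi_{+\infty}$; only the iterated limit $\lim_{w\to\infty}\nu_w$ equals $\pi_{+\infty}$, and exchanging that iterated limit with the diagonal (Orey-path) limit is exactly the unresolved difficulty flagged in Subsection \ref{outline} (sequences tending to the boundary ``slowly enough'' being Orey paths). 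Your final paragraph names this difficulty but the proposed monotone squeeze does not resolve it.

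The paper closes this gap by a different mechanism, with no Orey path at all. Lemmas \ref{neglect} and \ref{hasone} (Jacka--Roberts plus Condition [7] monotonicity) show that as $x\to+\infty$ all subsequential Yaglom limits from $x$ merge, uniformly over half-lines $C_w$; then Lemma \ref{farout} identifies the common limit $\alpha$ by averaging in time: the Martin entrance kernel $\chi(x,\cdot)=G_R(x,\cdot)/G_R(x,S)$ is a weighted average of the Yaglom ratios $K^n(x,\cdot)/K^n(x,S)$ with weights $R^nK^n(x,S)/G_R(x,S)$; Lemma \ref{Ihope} (which your proposal never invokes) shows that for $x$ large the total weight of any fixed initial block $n\le N$ is negligible, so $\chi(x,\cdot)$ lies within $O(\epsilon)$ in total variation of $\alpha$; and $\chi(x,\cdot)\to\pi_{+\infty}$ by the very construction of the $\rho$-Martin entrance boundary (Proposition \ref{extremes}). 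This forces $\alpha=\pi_{+\infty}$ and supplies precisely the lower bound your argument is missing. If you replace your Orey-path step by this Green-function comparison, the rest of your proposal goes through essentially as in the paper.
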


This theorem generalizes the example in \cite{McFoley}.  Example \ref{twosided-asym}  also provide  chains where Conditions [1,2,3,5,6]
can be checked but in this case $\hat{h}=h_{+\infty}$ so $\mu_x^{\hat{h}}(+\infty)=1$ and
the above limit is $\pi_{+\infty}(y)$ which doesn't depend on $x$.
Before giving the proof of Theorem \ref{main} we first establish preliminary
lemmas.

\begin{Lem}\label{Ihope}
For any $N$, $\lim_{x\to+\infty}\sum_{n=0}^N \frac{R^n K^n(x,S)}{G_R(x,S)}=0$.
\end{Lem}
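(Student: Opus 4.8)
The plan is to exploit the gross mismatch between the numerator, which stays bounded uniformly in $x$ for fixed $N$, and the denominator $G_R(x,S)$, which diverges as $x\to+\infty$. Once both facts are in hand the ratio visibly tends to zero.

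First I would bound the numerator from above. Since $K^n(x,S)=P_x(\zeta>n)\le 1$ and $R>1$ by Condition [2], the partial sum is dominated by a geometric sum that does not involve $x$ at all:
$$\sum_{n=0}^N R^n K^n(x,S)\le \sum_{n=0}^N R^n=\frac{R^{N+1}-1}{R-1}.$$
This is a finite constant depending only on $N$ (and $R$), so the numerator is uniformly bounded over all starting points $x$.

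Next I would record the divergence of the denominator. Recall the identity established earlier, $G_{z,S}(R)=(E_z R^{\zeta}-1)/(R-1)$, together with the observation (used in the proof of Proposition \ref{yaglomtight}) that Condition [3] forces $E_z R^{\zeta}\to\infty$ as $z\to\infty$, since $P_z(\zeta>m)\to 1$ for each fixed $m$ while $R>1$. Hence $G_R(x,S)\to\infty$ as $x\to+\infty$. Combining the two estimates gives
$$\frac{\sum_{n=0}^N R^n K^n(x,S)}{G_R(x,S)}\le \frac{R^{N+1}-1}{E_x R^{\zeta}-1}\xrightarrow[x\to+\infty]{}0,$$
which is exactly the claim. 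The only genuine input is the divergence of $G_R(x,S)$ supplied by Condition [3]; the numerator estimate is entirely elementary, so I do not anticipate any substantive obstacle — the content of the lemma is simply that a fixed finite head of the potential is negligible against a potential that blows up at infinity.
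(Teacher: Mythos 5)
Your proof is correct, and it takes a genuinely different (and more elementary) route than the paper. The paper proves Lemma \ref{Ihope} by rewriting each term via time reversal with respect to $\pi_{+\infty}$, namely $\frac{R^n K^n(x,S)}{G_R(x,S)}=\frac{\pi_{+\infty}\overleftarrow{K}^n(x)}{\pi_{+\infty}\overleftarrow{G}(x)}$, and then arguing that the denominator $\pi_{+\infty}\overleftarrow{G}(x)$ tends to a constant (because the reversed chain is transient to $+\infty$) while the numerator $\pi_{+\infty}\overleftarrow{K}^n(x)\to 0$ for each fixed $n$; this leans on the boundary machinery of Section \ref{special} (existence of $\pi_{+\infty}$ and the drift of its time reversal). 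You instead bound the numerator by the $x$-free constant $\sum_{n=0}^N R^n=\frac{R^{N+1}-1}{R-1}$ using $K^n(x,S)\le 1$, and show the denominator diverges via the identity $G_{x,S}(R)=\frac{E_x R^{\zeta}-1}{R-1}$ (established when deducing Condition [A] from Condition [2]) together with the fact, already used in the proof of Proposition \ref{yaglomtight}, that Condition [3] forces $E_x R^{\zeta}\to\infty$. Both inputs are legitimately available at this point of the paper, so your argument is complete. What your route buys: it needs only Conditions [2] and [3], works verbatim as $x\to-\infty$ or on a general countable state space (no nearest-neighbour structure, no $\rho$-invariant extremal, no transience of the reversal), and even yields an explicit rate $\frac{R^{N+1}-1}{E_x R^{\zeta}-1}$. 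What the paper's route buys is coherence with the surrounding development: it exhibits the lemma as a statement about the Green's function of the reversed chain, the same object driving the Orey-path analysis, though at the cost of invoking heavier structure than the statement actually requires.
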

\begin{proof}
Using time reversal with respect to $\pi_{+\infty}$
\begin{eqnarray*}
\frac{R^n K^n(x,S)}{G_R(x,S)}&=&
\frac{\pi_{+\infty}\overleftarrow{K}^n(x)}{\pi_{+\infty}\overleftarrow{G}(x)}\\
&\to& 0 \mbox{ as }x\to\infty
\end{eqnarray*}
since $\pi_{+\infty}\overleftarrow{G}(x)$ tends to a constant because
$\overleftarrow{X}$ is transient to $+\infty$ and since clearly
$\pi_{+\infty}\overleftarrow{K}^n(x)\to 0$ as $x\to\infty$.
\end{proof}

\begin{Lem}\label{neglect}
If $\mu_x^{\hat{h}}(+\infty)>0$ then
$$\lim_{x\to +\infty}\lim_{n\to\infty}\lim_{m\to\infty}\sum_{z<x}\tilde{K}(x,-N;z,-m)=0.$$
If $\mu_x^{\hat{h}}(-\infty)>0$ then
$$\lim_{x\to -\infty}\lim_{n\to\infty}\lim_{m\to\infty}\sum_{z>x}\tilde{K}(x,-N;z,-m)=0.$$
\end{Lem}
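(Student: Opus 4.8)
The plan is to recognize $\tilde{K}(x,-N;z,-m)$, for fixed starting point $x$ and fixed step-count $n$ (with $N=n+m$), as the one-dimensional marginal $P^N_x(\tilde{X}_n=z)$ of an honest probability, and to show that as $m\to\infty$ it collapses onto the transition probability of the conditioned-to-survive chain $X^{\hat{h}}$. I treat only the first statement; the second is its mirror image with $\{z>x\}$, $x\to-\infty$, and exit point $-\infty$. First I would evaluate the innermost limit. Writing $N=n+m$ and using the formula from Subsection~\ref{ST},
\[
\tilde{K}(x,-N;z,-m)=K^n(x,z)\,\frac{K^m(z,S)}{K^m(x,S)}\,\frac{K^m(x,S)}{K^{n+m}(x,S)}.
\]
As $m\to\infty$, Lemma~\ref{follow} gives $K^m(x,S)/K^{n+m}(x,S)\to R^n$, while the ratio form of the Jacka--Roberts Condition [5], namely $K^m(z,S)/K^m(x,S)\to\hat{h}(z)/\hat{h}(x)$, controls the middle factor. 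Hence $\tilde{K}(x,-N;z,-m)\to R^n K^n(x,z)\hat{h}(z)/\hat{h}(x)=K_{\hat{h}}^n(x,z)=P_x(X^{\hat{h}}_n=z)$; this is exactly the convergence $P^N_x\to P^{\hat{h}}_x$ on the fixed coordinate $n$ recorded in the Main result subsection.

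Next I would push the sum over $\{z<x\}$ through this limit. Since both $\tilde{K}(x,-N;\cdot,-m)$ and its limit $K_{\hat{h}}^n(x,\cdot)$ are probability measures on $\mathbb{Z}$ — the latter because $\hat{h}$ is $\rho$-harmonic, so $K_{\hat{h}}$ is stochastic — pointwise convergence upgrades to total-variation convergence by Scheff\'e's lemma. Therefore summing over the fixed set $\{z<x\}$ is legitimate and
\[
\lim_{m\to\infty}\sum_{z<x}\tilde{K}(x,-N;z,-m)=P_x\!\left(X^{\hat{h}}_n<x\right).
\]
For the middle limit, recall that $R$-transience makes $X^{\hat{h}}$ transient, and by Theorem~4 in \cite{Dynkin} together with the identification of the exit boundary with $\{-\infty,+\infty\}$ (Proposition~\ref{extremes2}), $X^{\hat{h}}_n\to X^{\hat{h}}_\infty\in\{-\infty,+\infty\}$ almost surely. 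On $\{X^{\hat{h}}_\infty=+\infty\}$ one eventually has $X^{\hat{h}}_n\ge x$, and on $\{X^{\hat{h}}_\infty=-\infty\}$ eventually $X^{\hat{h}}_n<x$; bounded convergence then yields $\lim_{n\to\infty}P_x(X^{\hat{h}}_n<x)=\mu_x^{\hat{h}}(-\infty)$.

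It remains to take $x\to+\infty$ and show $\mu_x^{\hat{h}}(-\infty)\to0$. Under the hypothesis $\mu_x^{\hat{h}}(+\infty)>0$ — which, since $\mu_x^{\hat{h}}(+\infty)$ is proportional to $h_{+\infty}(x)/\hat{h}(x)$ with both factors strictly positive, holds at one $x$ iff it holds at all $x$ — I would invoke the earlier relative-Fatou Proposition (applicable because every $\rho$-harmonic function satisfies the relative Fatou theorem, by Proposition~\ref{extremes2}) with boundary point $e=+\infty$. Using Condition [4] to identify $x\to+\infty$ geometrically with convergence to $+\infty$ in the Martin topology, that Proposition gives $\mu_x^{\hat{h}}(+\infty)\to1$, whence $\mu_x^{\hat{h}}(-\infty)=1-\mu_x^{\hat{h}}(+\infty)\to0$, completing the iterated limit. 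The main obstacle is the combination of the first two steps: everything rests on the $m\to\infty$ limit landing on an honest, mass-one probability so that the Scheff\'e upgrade licenses interchanging the limit with the infinite sum over $\{z<x\}$. This is precisely where $R$-transience, Condition [5], and the $\rho$-harmonicity of $\hat{h}$ (making $K_{\hat h}$ stochastic, so that no mass escapes in the limit) are indispensable; without mass conservation the pointwise limit would not justify summing over the unbounded half-line.
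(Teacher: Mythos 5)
Your proof is correct, and it follows the same three-stage skeleton as the paper's: an inner limit in $m$ turning $\sum_{z<x}\tilde{K}(x,-N;z,-m)$ into $\sum_{z<x}K^n_{\hat{h}}(x,z)$ via Condition [5], a middle limit in $n$ using transience of $X^{\hat{h}}$ to get $\mu_x^{\hat{h}}(-\infty)$, and an outer limit in $x$ sending that mass to zero. The two places where your justifications genuinely differ are worth recording. First, the interchange of $\lim_{m\to\infty}$ with the infinite sum over $\{z<x\}$: the paper dispatches this in one line by noting that for the nearest-neighbour chain $K^n(x,\cdot)$ charges only the finitely many sites in $[x-n,x+n]$, so the sum is finite and there is nothing to interchange; you instead invoke Scheff\'e's lemma, using that $\tilde{K}(x,-N;\cdot,-m)$ and $K^n_{\hat{h}}(x,\cdot)$ are both honest probabilities (the latter because Condition [5] makes $\hat{h}$ exactly $\rho$-harmonic, so $K_{\hat{h}}$ is stochastic). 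Your route is more general --- it would survive dropping the nearest-neighbour restriction, which the paper itself calls ``undoubtedly much too strong'' --- at the cost of the mass-one bookkeeping, while the paper's is shorter given its standing hypotheses. Second, the outer limit: the paper simply asserts that transience lets one choose $x$ large with $P_x(X^{\hat{h}}_{\infty}=+\infty)\geq 1-\epsilon$, whereas you actually prove $\mu_x^{\hat{h}}(+\infty)\to 1$ by combining the relative-Fatou Proposition with Proposition \ref{extremes2}, plus the observation that positivity of $\mu_x^{\hat{h}}(+\infty)$ at one $x$ gives it at all $x$. This fills in a step the paper leaves implicit; in the nearest-neighbour setting it can also be seen elementarily, since $\mu_x^{\hat{h}}(-\infty)\leq P_x(X^{\hat{h}}\mbox{ hits }0)$, which is nonincreasing in $x>0$ and must tend to $0$ (a uniform positive lower bound would force $0$ to be visited infinitely often, contradicting transience). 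So your argument is, if anything, more complete than the paper's, while reaching the identical conclusion by the identical overall route.
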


\begin{proof}
For $n$ fixed $K^n(x,z)$ puts mass on a finite number of $z$'s
and by the Jacka-Roberts property
\begin{eqnarray*}
\lim_{m\to\infty}\sum_{z<x}\tilde{K}(x,-N;z,-m)&=&\lim_{m\to\infty}
\sum_{z<x}K^n(x,z)\frac{K^{m}(z,S)}{K^{m+n}(x,S)}\\
&=& \sum_{z<x}K^n(x,y)\frac{\hat{h}(z)}{\rho^n\hat{h}(x)}\\
&=&\sum_{z<x}K^n_{\hat{h}}(x,z).
\end{eqnarray*}
The $\hat{h}$-transformed chain is transient to $\{-\infty,+\infty\}$ so for any $\epsilon$ we can pick an $x$ sufficiently big so that
$P_x(X^{\hat{h}}_{\infty}=+\infty)\geq 1-\epsilon$. For this $x$,
$\lim_{n\to\infty}\sum_{z<x}K^n_{\hat{h}}(x,z)<\epsilon.$ As $x\to \infty$
we can pick $\epsilon$ arbitrarily small so the result follows for $x\to\infty$.
The proof for the case $x\to -\infty$ is analogous.
\end{proof}

Let $C_w=[w,+\infty)$
\begin{Lem}\label{hasone}
If $\mu_x^{\hat{h}}(+\infty)>0$ then uniformly in $w$,
$$\lim_{x\to +\infty}|\limsup_{N\to\infty}\frac{K^{N}(x,C_w)}{K^{N}(x,S)}
-\liminf_{N\to\infty}\frac{K^{N}(x,C_w)}{K^{N}(x,S)}|=0.$$
By the same proof picking $D_w=(-\infty,w]$, if $\mu_x^{\hat{h}}(-\infty)>0$ then uniformly in $w$
$$\lim_{x\to -\infty}|\limsup_{N\to\infty}\frac{K^{N}(x,D_w)}{K^{N}(x,S)}
-\liminf_{N\to\infty}\frac{K^{N}(x,D_w)}{K^{N}(x,S)}|=0.$$
\end{Lem}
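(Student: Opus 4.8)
The plan is to read $a_N(w):=K^N(x,C_w)/K^N(x,S)=\tilde K(x,-N;C_w,0)$ as the probability, under the space-time $H$-transform $\tilde X$ started at $(x,-N)$, that $\tilde X$ lands in the upper tail $C_w$ at time $0$, and to control its oscillation in $N$ through the extremal structure of the $\rho$-invariant probabilities rather than through a direct estimate. First I would record that by Proposition \ref{space-time-tightness} the family $\{K^N(x,\cdot)/K^N(x,S)\}_N$ is tight, so every subsequence has a weakly convergent subsequence; any such limit $\nu$ is a proper probability, and a routine shift argument using $K^{N+1}(x,S)/K^N(x,S)\to\rho$ (Lemma \ref{follow}) together with the finite range of $K$ shows $\nu K=\rho\nu$, i.e. $\nu$ is a $\rho$-invariant probability. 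By Proposition \ref{extremes} each such $\nu$ is a convex combination $\nu=\alpha_-\pi_{-\infty}+\alpha_+\pi_{+\infty}$ with $\alpha_-+\alpha_+=1$, and since tightness rules out escape of mass the scalar sequence satisfies $a_{N_k}(w)\to\nu(C_w)$ along any subsequence realizing $\nu$.

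The key point is that uniformity in $w$ is then automatic. Since $\pi_{-\infty}$ is stochastically smaller than $\pi_{+\infty}$ (Proposition \ref{extremes}), we have $\pi_{-\infty}(C_w)\le\pi_{+\infty}(C_w)$ for every $w$, so
$$\nu(C_w)=\pi_{-\infty}(C_w)+\alpha_+\bigl(\pi_{+\infty}(C_w)-\pi_{-\infty}(C_w)\bigr)$$
is nondecreasing in $\alpha_+$. Writing $\overline\alpha_+(x)$ and $\underline\alpha_+(x)$ for the largest and smallest values of $\alpha_+$ occurring among subsequential weak limits (these depend on $x$ but not on $w$, and are attained since the set of subsequential limits is compact), the extreme values of $a_N(w)$ are attained at $\overline\alpha_+(x)$ and $\underline\alpha_+(x)$, whence
$$\limsup_{N\to\infty}a_N(w)-\liminf_{N\to\infty}a_N(w)=\bigl(\overline\alpha_+(x)-\underline\alpha_+(x)\bigr)\bigl(\pi_{+\infty}(C_w)-\pi_{-\infty}(C_w)\bigr)\le\overline\alpha_+(x)-\underline\alpha_+(x).$$
The right-hand side no longer involves $w$, so it remains only to prove that $\overline\alpha_+(x)-\underline\alpha_+(x)\to0$ as $x\to+\infty$; equivalently that the $\pi_{-\infty}$-content $\alpha_-$ of every subsequential limit tends to $0$.

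To bound $\alpha_-$ I would decompose at an intermediate space-time level $-m$,
$$\frac{K^N(x,C_w)}{K^N(x,S)}=\sum_z\tilde K(x,-N;z,-m)\,\frac{K^m(z,C_w)}{K^m(z,S)},$$
where $\tilde K(x,-N;\cdot,-m)$ is a genuine probability, and split the sum at $z=x$. Lemma \ref{neglect} says precisely that the mass $\sum_{z<x}\tilde K(x,-N;z,-m)$ that the $\hat h$-transform places below $x$ by the intermediate time is negligible in the iterated limit $\lim_x\lim_n\lim_m$, because $X^{\hat h}$ started far out escapes to $+\infty$ with probability $\mu_x^{\hat h}(+\infty)\to1$. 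Using Condition [7], i.e. stochastic monotonicity of $K^m(z,\cdot)$ in $z$, together with the known convergence $K^m(\overleftarrow X^{\pi_{+\infty}}_m,\cdot)/K^m(\overleftarrow X^{\pi_{+\infty}}_m,S)\to\pi_{+\infty}$ along the Orey path, the retained block $z\ge x$ forces the conditional laws toward the extremal profile $\pi_{+\infty}$, so the $\pi_{-\infty}$-content of any subsequential limit obeys $\alpha_-\le\mu_x^{\hat h}(-\infty)+o(1)$. Since $\mu_x^{\hat h}(-\infty)\le1-\mu_x^{\hat h}(+\infty)\to0$, both $\overline\alpha_+(x)$ and $\underline\alpha_+(x)$ tend to $1$ and their difference vanishes. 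The case $x\to-\infty$ with $D_w$ is identical after exchanging the roles of $+\infty$ and $-\infty$.

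The hard part is this last step: rigorously converting the ``no downward mass at intermediate times'' estimate of Lemma \ref{neglect} into an upper bound on the analytic coefficient $\alpha_-$ of an arbitrary subsequential limit. The difficulty is the interchange of the three limits (in $m$, in $n$, and in the subsequence defining $\nu$), compounded by the fact that killing at $0$ makes the stochastic monotonicity of the \emph{conditional} laws $K^m(z,\cdot)/K^m(z,S)$ delicate rather than immediate. The cleanest way around this is to perform the comparison not on $K^m(z,\cdot)$ directly but on the stochastically extremal profiles $\pi_{\pm\infty}$ and on the Orey paths $\overleftarrow X^{\pi_{\pm\infty}}$, along which the Yaglom ratios are already pinned to $\pi_{\pm\infty}$, thereby transferring the escape estimate of the $\hat h$-chain into a bound on $\alpha_-$ without ever needing monotonicity of the normalized kernels.
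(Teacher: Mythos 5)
Your reduction of the uniformity in $w$ is sound and close in spirit to the paper's: tightness plus Proposition \ref{produceit} makes every subsequential limit a $\rho$-invariant probability, Proposition \ref{extremes} writes it as $\alpha_-\pi_{-\infty}+\alpha_+\pi_{+\infty}$, and the oscillation is then bounded by $\overline\alpha_+(x)-\underline\alpha_+(x)$ independently of $w$. The gap is in the only step that then remains: you assert that the decomposition at level $-m$, Lemma \ref{neglect}, Condition [7] and the Orey-path convergence give $\alpha_-\le\mu_x^{\hat h}(-\infty)+o(1)$ for \emph{every} subsequential limit. This is never proved, and the tools you invoke cannot prove it. For $z\ge x$, monotonicity of the normalized kernels gives the lower bound $K^m(z,C_w)/K^m(z,S)\ge K^m(x,C_w)/K^m(x,S)$, i.e.\ a bound by the very quantity you are trying to control; converting it into ``$\alpha_-$ is small'' requires knowing that $K^m(x,C_w)/K^m(x,S)$ is already close to $\pi_{+\infty}(C_w)$ at intermediate times, which is circular. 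The Orey-path comparison runs the wrong way: since $\overleftarrow{X}^{\pi_{+\infty}}_m\to+\infty$, for large $m$ the path lies above any fixed $z$, so monotonicity plus the pinned ratios along the path yields only that subsequential limits are stochastically dominated by $\pi_{+\infty}$ --- an upper bound already trivial from Proposition \ref{extremes} --- and never the lower bound you need. Indeed, the statement you set out to prove ($\underline\alpha_+(x)\to1$, i.e.\ every subsequential limit tends to $\pi_{+\infty}$) is exactly Lemma \ref{farout}, which the paper proves \emph{after} and \emph{using} the present lemma, together with a genuinely new ingredient (Lemma \ref{Ihope} and the convergence of the Martin entrance kernel $\chi(x,\cdot)\to\pi_{+\infty}$); so your plan is both stronger than what is needed and, with the cited tools, unobtainable.

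What the paper does instead avoids identification altogether. Fix $w$, let $\beta$ be a subsequential limit realizing $\limsup_N$ and $\alpha$ one realizing $\liminf_N$; by the convex representation they are stochastically ordered, $\beta\succeq\alpha$. Take $m$ large along the $\beta$-subsequence and $N=n+m$ large along the $\alpha$-subsequence, and apply your decomposition with \emph{these} choices: Lemma \ref{neglect} kills the $z<x$ block, and the space-time monotonicity (valid because the non-homogeneous chain $\tilde K$ has holding probabilities $s_xK^{n-1}(x,S)/K^n(x,S)>s_x\ge 1/2$ under Condition [7]) bounds the $z\ge x$ block below by $\tilde K(x,-m;C_w,0)\approx\beta(C_w)$. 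This yields $\alpha(C_w)\ge\beta(C_w)-3\epsilon$ uniformly in $w$, which together with $\beta(C_w)\ge\alpha(C_w)$ closes the proof: the limsup and liminf are forced together without ever knowing their common value. Note finally that the monotonicity of the \emph{normalized} kernels, which your last paragraph proposes to dispense with, is precisely what makes this comparison work; the paper's justification of it via the space-time chain is the part of the argument you should keep, not discard.
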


\begin{proof}
Using Lemma \ref{neglect}, for any $\epsilon >0$ pick $L$, $A$ and $B$ sufficiently large such that
for $x>L$, $n>A$ and $m>B$
\begin{eqnarray}
\sum_{z<x}\tilde{K}(x,-N;z,-m)&=&
\sum_{z<x}K^n(x,z)\frac{K^{m}(z,S)}{K^{m+n}(x,S)}<\epsilon.\label{smallsum}
\end{eqnarray}

By Proposition \ref{produceit} any weakly convergent subsequence of
$K^N(x,\cdot)/K^N(x,S)$
converges to
a $\rho$-invariant probability. Suppose we can find subsequences
$I=\{m_i:i=1,\cdots\}$ and $J=\{N_j:1,\cdots\}$ such that
$K^{m_i}(x,\cdot)/K^{m_i}(x,S)$ converges to $\beta(\cdot)$ which is stochastically larger
than the limit $\alpha(\cdot)$ of $K^{N_j}(x,\cdot)/K^{N_j}(x,S)$.

Pick $m\in I$ and  sufficiently large so that  $m>B$ and  $K^{m}(x,C_w)/K^{m}(x,S)$
is within $\epsilon$ of $\beta(C_w)$.
Then pick a $N\in J$ sufficiently large so that $n>A$ where $N=n+m$ and
$K^{N}(x,C_w)/K^{N}(x,S)$ is within $\epsilon$ of $\alpha(C_w)$.

\begin{eqnarray}
\frac{K^{N}(x,C_w)}{K^{N}(x,S)}&=&\tilde{K}(x,-N;C_w,0)\\
&=&\sum_{z}\tilde{K}(x,-N;z,-m)\tilde{K}(z,-m;C_w,0)\label{split}
\end{eqnarray}
By (\ref{smallsum}),
$\sum_{z<x}\tilde{K}(x,-N;z,-m)\tilde{K}(z,-m;C_w,0)<\epsilon$.
Moreover for $z\geq x$ using the nearest neighbour property and aperiodicity
$\tilde{K}(z,-m:C_w,0)$ is  larger than
$\tilde{K}(x,-m:C_w,0)$.
This follows since the nearest neighbour non-homogeneous chain $\tilde{X}$
with kernel $\tilde{K}(x,-n;y,-n+1)=K(x,y)K^{n-1}(y,S)/K^n(x,S)$ on space-time
has probability $\tilde{s}_x=s_x K^{n-1}(x,S)/K^n(x,S)>s_x\geq 1/2$ of staying
put where we used Condition [7]. The monotonicity follows by the remark after
Condition [7] which applies even to non-homogeneous chains.

Hence
\begin{eqnarray*}
\frac{K^{N}(x,C_w)}{K^{N}(x,S)}&\geq& \tilde{K}(x,-m;C_w,0)-\epsilon
\geq \beta(C_w)-2\epsilon
\end{eqnarray*}
so $\alpha(C_w)\geq \beta(C_w)-3\epsilon$.  However $\beta$ is stochastically
larger than $\alpha$ so $|\beta(C_w)-\alpha(C_w)|<3\epsilon$.
Since $\epsilon$ can be arbitrarily small as $x\to +\infty$ and $m\to\infty$
we get our result.
\end{proof}

We now need to identify the Yaglom limits in Lemma \ref{hasone}.
\begin{Lem}\label{farout}
If $\mu_x^{\hat{h}}(+\infty)>0$ then
\begin{eqnarray*}
\lim_{x\to +\infty}\liminf_{N\to\infty} \frac{K^N(x,\cdot)}{K^N(x,S)}&=&\pi_{+\infty}(\cdot)
\end{eqnarray*}
If $\mu_x^{\hat{h}}(-\infty)>0$ then
\begin{eqnarray*}
\lim_{x\to -\infty}
\limsup_{N\to\infty}\frac{K^N(x,\cdot)}{K^N(x,S)}&=&\pi_{-\infty}(\cdot)
\end{eqnarray*}
\end{Lem}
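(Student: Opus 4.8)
I would prove the first assertion; the second follows by the reflected argument. Fix $w$, write $C_w=[w,+\infty)$, set $a_N(x)=K^N(x,C_w)/K^N(x,S)$, and let $\ell(x)=\liminf_N a_N(x)$ and $L(x)=\limsup_N a_N(x)$. Since a $\rho$-invariant probability is determined by its values on the sets $C_w$ and the family $K^N(x,\cdot)/K^N(x,S)$ is tight, it suffices to show $\lim_{x\to+\infty}\ell(x)=\pi_{+\infty}(C_w)$ for every $w$. The easy half is an upper bound valid for every $x$: by tightness (Proposition \ref{space-time-tightness}) each subsequence of $K^N(x,\cdot)/K^N(x,S)$ has a weakly convergent sub-subsequence, and by Proposition \ref{produceit}—whose hypothesis \eqref{oreysequence} holds for a constant starting sequence by Lemma \ref{follow}—every such limit is a $\rho$-invariant probability $\mu$. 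By Proposition \ref{extremes}, $\mu$ is stochastically dominated by $\pi_{+\infty}$, so $\mu(C_w)\le\pi_{+\infty}(C_w)$. Choosing the subsequence realizing $L(x)$ gives $L(x)\le\pi_{+\infty}(C_w)$, and a fortiori $\ell(x)\le\pi_{+\infty}(C_w)$, for all $x$.

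The substance is the matching lower bound $\liminf_{x\to+\infty}\ell(x)\ge\pi_{+\infty}(C_w)$. First I would extract from the proof of Lemma \ref{hasone} the one-sided comparison it already contains: decomposing $a_N(x)=\sum_z\tilde K(x,-N;z,-m)\,\tilde K(z,-m;C_w,0)$ at an intermediate level $-m$ with $N=n+m$, discarding the mass on $\{z<x\}$ via Lemma \ref{neglect} (inequality \eqref{smallsum}), and using the stochastic monotonicity guaranteed by Condition [7]—which, by the remark following that condition, applies to the nonhomogeneous chain $\tilde X$—to bound $\tilde K(z,-m;C_w,0)\ge\tilde K(x,-m;C_w,0)$ for $z\ge x$. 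This yields, for all $x>L$, $m>B$ and $n>A$, the relation $a_N(x)\ge a_m(x)-\epsilon$; letting $N\to\infty$ I obtain the uniform-in-$m$ upper bound $a_m(x)\le\ell(x)+\epsilon$ for every $m>B$.

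Now I would average against the Martin entrance kernel. Writing $\chi(x,C_w)=G_{x,C_w}(R)/G_{x,S}(R)=\sum_m w_m(x)\,a_m(x)$ with Abel weights $w_m(x)=R^mK^m(x,S)/G_R(x,S)$ that sum to $1$, I split at $B$: the head $\sum_{m\le B}w_m(x)\to0$ by Lemma \ref{Ihope}, while on the tail the previous bound gives $\sum_{m>B}w_m(x)\,a_m(x)\le\ell(x)+\epsilon$, so that $\chi(x,C_w)\le\sum_{m\le B}w_m(x)+\ell(x)+\epsilon$. Since $\chi(x,\cdot)\to\pi_{+\infty}$ as $x\to+\infty$ (the unique entrance point at $+\infty$; Condition [4] and Proposition \ref{extremes}), tightness (Condition [B]) gives $\chi(x,C_w)\to\pi_{+\infty}(C_w)$, and letting $x\to+\infty$ and then $\epsilon\downarrow0$ yields $\pi_{+\infty}(C_w)\le\liminf_{x\to+\infty}\ell(x)$. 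Combined with the upper bound, $\lim_{x\to+\infty}\ell(x)=\pi_{+\infty}(C_w)$, which is the claim.

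I expect the lower bound to be the main obstacle: the comparison inherited from Lemma \ref{hasone} controls the finite-time ratios $a_m(x)$ only from above (and uniformly in $m$), so one cannot squeeze $\ell(x)$ from below through the finite-time ratios directly. Routing the estimate through $\chi$—whose $x\to+\infty$ limit is exactly $\pi_{+\infty}$, and whose defining Abel weights escape to large $m$ by Lemma \ref{Ihope}, precisely where the uniform upper bound is available—is what converts the one-sided control into the two-sided identification. The reflected argument, using $D_w=(-\infty,w]$, the hypothesis $\mu_x^{\hat h}(-\infty)>0$, and the stochastically smallest extremal $\pi_{-\infty}$, handles the limit as $x\to-\infty$.
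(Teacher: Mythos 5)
Your proposal is correct, and it runs on the same engine as the paper's own proof: the one-sided comparison $a_N(x)\ge a_m(x)-\epsilon$ obtained from Lemma \ref{neglect} (inequality \eqref{smallsum}) together with the Condition [7] monotonicity of the space-time chain, the identity $\chi(x,C_w)=\sum_m w_m(x)a_m(x)$ expressing the Martin entrance kernel as an Abel average of the Yaglom ratios, Lemma \ref{Ihope} to annihilate the head of that average, and the convergence $\chi(x,\cdot)\to\pi_{+\infty}$ as $x\to+\infty$. Where you genuinely differ is in the bookkeeping, and your version buys some rigor. The paper introduces $\alpha_x=\liminf_N K^N(x,\cdot)/K^N(x,S)$ as a \emph{measure}, takes its stochastically monotone limit $\alpha$ as $x\to\infty$, and then argues in total variation, invoking the step that stochastic ordering gives $||\alpha(\cdot)-K^N(x,\cdot)/K^N(x,S)||\le\sup_w|\alpha(C_w)-K^N(x,C_w)/K^N(x,S)|$; that inequality is delicate (for a general stochastically ordered pair the $L^1$ distance is the total variation of the sequence $w\mapsto\alpha(C_w)-K^N(x,C_w)/K^N(x,S)$, not its supremum), and the object $\alpha_x$ itself needs interpretation as a measure. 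You instead stay with scalar half-line masses throughout, extract from the Abel average only the one-sided inequality $\pi_{+\infty}(C_w)\le\liminf_{x\to+\infty}\ell(x)$, and close the sandwich with the a priori bound $L(x)\le\pi_{+\infty}(C_w)$ coming from Propositions \ref{produceit} and \ref{extremes} — an ingredient the paper's proof of this lemma does not invoke explicitly but which is available earlier in the paper and is unimpeachable; the resulting convergence of both $\ell(x)$ and $L(x)$ then recovers the pointwise statement by differencing consecutive half-lines. One caveat you share with the paper rather than introduce: your lower bound uses \eqref{smallsum} for a \emph{fixed} $m>B$ and all large $n$, i.e.\ the uniform-in-$(x,n,m)$ form asserted in the proof of Lemma \ref{hasone}, whereas the iterated-limit statement of Lemma \ref{neglect} by itself only yields a threshold $B$ depending on $(x,n)$; this interchange is exactly the one the paper performs, so it is not a new gap.
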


\begin{proof}
For each $x$ let $\alpha_x$ denote $ \liminf_{N\to\infty} \frac{K^N(x,\cdot)}{K^N(x,S)}$ which is necessarily a $\rho$-invariant probability. The $\alpha_x$ are stochastically increasing in $x$. Suppose the limit as $x\to\infty$ is $\alpha$. By Lemma \ref{hasone} and the fact that
the measure $K^N(x,\cdot)/K^N(x,S)$ is stochastically increasing in $x$, for any $\epsilon$ and uniformly in $C_w=[w,\infty)$
we can
pick an $L$ and $K$ such that for all $x\geq L$ and all $N\geq K$,
$|\alpha(C_w)-\frac{K^N(x,C_w)}{K^N(x,S)}|\leq \epsilon$.
However $K^N(x,\cdot)/K^N(x,S)$ is stochastically smaller than $\alpha$
so this means
$||\alpha(\cdot)-\frac{K^N(x,\cdot)}{K^N(x,S)}||\leq
\sup_w|\alpha(C_w)-\frac{K^N(x,C_w)}{K^N(x,S)}|\leq\epsilon$
where $||\cdot||$ is the total variation.

For some $N>K$ we may
pick $x>L$ such that
$||\chi(x,\cdot)-\pi_{+\infty}(\cdot)||<\epsilon$ and
using Lemma \ref{Ihope}, such that
$\sum_{n=0}^N R^nK^n(x,S)/G_R(x,S)<\epsilon$.
Moreover,
\begin{eqnarray*}
||\chi(x,\cdot)-\alpha(\cdot)||&\leq&
\sum_{n=0}^N R^n\frac{K^n(x,S)}{G_R(x,S)}
+\sum_{y}|\alpha(y)-\sum_{n=N+1}^{\infty} R^n\frac{K^n(x,y)}{G_R(x,S)}|
\end{eqnarray*}
and since
$$\sum_{n=0}^{\infty} \frac{R^nK^n(x,S)}{G_R(x,S)}=1$$
\begin{eqnarray*}
\lefteqn{\sum_{y}|\alpha(y)-\sum_{n=N+1}^{\infty} R^n\frac{K^n(x,y)}{G_R(x,S)}|}\\
&\leq &\sum_{n=N+1}^{\infty} \frac{R^nK^n(x,S)}{G_R(x,S)}
\sum_{y}|\frac{K^n(x,y)}{K^n(x,S)}-\alpha(y)|+
\sum_{n=0}^N\frac{R^nK^n(x,S)}{G_R(x,S)}\\
&\leq& (1-\epsilon)\epsilon+\epsilon.
\end{eqnarray*}
Since $\epsilon$ is arbitrary $\alpha=\pi_{+\infty}$
and this gives the result.
\end{proof}

\begin{Cor}\label{hopeful}
If $\mu_x^{\hat{h}}(+\infty)>0$ then for any sequence $x_N\to +\infty$ such that
$$K^{N+1}(x_N,S)/K^N(x_N,S)\to \rho \mbox{ we have }
\lim_{N\to\infty} \frac{K^N(x_N,\cdot)}{K^N(x_N,S)}=\pi_{+\infty}(\cdot).$$
Similarly if $\mu_x^{\hat{h}}(-\infty)>0$ then for any sequence $y_N\to -\infty$ such that
$$K^{N+1}(y_N,S)/K^N(y_N,S)\to \rho\mbox{ we have }
\lim_{N\to\infty} \frac{K^N(y_N,\cdot)}{K^N(y_N,S)}=\pi_{-\infty}(\cdot).$$
\end{Cor}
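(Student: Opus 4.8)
The plan is to fix the hypothesis $\mu_x^{\hat h}(+\infty)>0$, abbreviate $\nu_N(\cdot)=K^N(x_N,\cdot)/K^N(x_N,S)$, and prove that along the given sequence $x_N\to+\infty$ with $K^{N+1}(x_N,S)/K^N(x_N,S)\to\rho$ every weak subsequential limit of $\nu_N$ equals $\pi_{+\infty}$; tightness of $\{\nu_N\}$ from Proposition~\ref{space-time-tightness} then upgrades this to full convergence. First I would pass to a subsequence along which $\nu_N\to\pi$ pointwise and, simultaneously, the space-time kernels $k((x_N,-N);(y,t))=K^{N+t}(x_N,y)/K^N(x_N,S)$ converge to a space-time invariant measure $\alpha$ with $\alpha(\cdot,0)=\pi$. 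By tightness $\pi$ is a genuine probability, and since killing occurs only at $0$ (Condition~[6]) the ratio hypothesis reads $K^{N+1}(x_N,S)/K^N(x_N,S)=1-\kappa\,\nu_N(0)\to\rho$, so $\pi(0)=(1-\rho)/\kappa$ as forced by Lemma~\ref{onekill}; tightness of the time-$1$ slice further gives $\alpha(S,1)=\rho=\rho\,\alpha(S,0)$.

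The lower bound $\pi\succeq\pi_{+\infty}$ is the easy half. By the stochastic monotonicity in the starting state supplied by Condition~[7], for each fixed $L$ and all $N$ large enough that $x_N\ge L$ one has $\nu_N(C_w)\ge K^N(L,C_w)/K^N(L,S)$ for every $C_w=[w,+\infty)$. Letting $N\to\infty$ along the subsequence and then $L\to+\infty$, Lemma~\ref{farout} yields $\pi(C_w)\ge\pi_{+\infty}(C_w)$ for all $w$, i.e. $\pi$ dominates $\pi_{+\infty}$ stochastically.

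The crux is to show that $\pi$ is $\rho$-invariant, and this is where the ratio hypothesis does its work. I would decompose the space-time invariant measure $\alpha$ into minimal invariant measures, each of which is a product $\hat\pi_\omega(y)\theta_\omega^{\,t}$ with $\hat\pi_\omega$ a $\theta_\omega$-invariant probability (the proposition identifying minimal space-time invariant measures as products). Every $\theta_\omega\ge\rho$, since a nonzero measure with $\hat\pi K=\theta\hat\pi$ satisfies $\theta^n\hat\pi(y)\ge\hat\pi(x)K^n(x,y)$ and $K^n(x,y)^{1/n}\to\rho$ by Lemma~\ref{follow}. After normalising the spatial masses to $1$ (legitimate because $\alpha(S,0)=1$) and writing $\alpha(S,t)=\int\theta_\omega^{\,t}\,d\hat m(\omega)$ for a probability measure $\hat m$ on the boundary, the identity $\alpha(S,1)=\rho$ becomes $\int\theta_\omega\,d\hat m=\rho=\int\rho\,d\hat m$; together with $\theta_\omega\ge\rho$ this forces $\theta_\omega=\rho$ for $\hat m$-a.e. $\omega$. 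Hence $\pi=\alpha(\cdot,0)=\int\hat\pi_\omega\,d\hat m$ is an average of $\rho$-invariant probabilities, so $\pi$ is $\rho$-invariant. I expect the main technical obstacle to lie exactly here: justifying the integral representation of $\alpha$ over the space-time entrance boundary with probability spatial factors, and interchanging the $y$-summation with the limit in $\alpha(S,1)$, both of which rest on the tightness furnished by Proposition~\ref{space-time-tightness}. (One could instead try to verify the pointwise Orey condition (\ref{oreysequence}) and invoke Proposition~\ref{produceit}, but (\ref{oreysequence}) is essentially equivalent to the $\rho$-invariance being sought, so the space-time decomposition is the cleaner route.)

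Finally, since $\pi_{+\infty}$ is the stochastically largest $\rho$-invariant probability (Proposition~\ref{extremes}) and $\pi$ is $\rho$-invariant, $\pi\preceq\pi_{+\infty}$; combined with the lower bound this gives $\pi=\pi_{+\infty}$. As this holds for every subsequential limit, $\nu_N\to\pi_{+\infty}$, which is the first assertion. The second assertion, for $y_N\to-\infty$ with $\mu_x^{\hat h}(-\infty)>0$, follows by the symmetric argument, using the sets $D_w=(-\infty,w]$, the $-\infty$ halves of Lemma~\ref{farout} and Proposition~\ref{extremes}, and $\pi_{-\infty}$ in place of $\pi_{+\infty}$.
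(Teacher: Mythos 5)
Your proposal is correct, and its endgame is the same sandwich the paper uses: tightness reduces everything to identifying subsequential weak limits, Condition [7] monotonicity together with Lemma \ref{farout} gives the stochastic lower bound by $\pi_{+\infty}$, and extremality (Proposition \ref{extremes}) gives the matching upper bound once the limit is known to be $\rho$-invariant. Where you genuinely diverge from the paper is at the step it treats most casually: the paper simply asserts that ``by Lemma \ref{hasone} any weakly converging subsequence converges to a $\rho$-invariant probability,'' a citation that does not actually deliver $\rho$-invariance (Lemma \ref{hasone} controls oscillation in $N$ for a fixed starting state; the natural reference, Proposition \ref{produceit}, requires the pointwise Orey condition (\ref{oreysequence}), which is not obviously implied by the corollary's hypothesis, a ratio condition involving $S$ only). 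You instead pass to a space-time invariant limit $\alpha$, decompose it over the minimal product measures $\hat\pi_\omega(y)\theta_\omega^t$, note $\theta_\omega\geq\rho$ always, and use the hypothesis $K^{N+1}(x_N,S)/K^N(x_N,S)\to\rho$ — via $\alpha(S,1)=\rho$, justified by the varying-starting-point tightness of Proposition \ref{space-time-tightness} — to force $\theta_\omega=\rho$ almost everywhere; this makes explicit exactly where the ratio hypothesis enters, which the paper leaves implicit. What your route buys is a genuine repair of the weakest link in the paper's own argument; what it costs is reliance on a Choquet/Dynkin-type integral representation of space-time invariant measures (entrance laws) over the minimal ones, which the paper establishes only at the level of identifying the minimal measures and never proves as a representation theorem, though it is the space-time analogue of representation (73) in \cite{Dynkin} that the paper invokes repeatedly for spatial measures. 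You correctly flag this representation, and the attendant interchanges of sum and limit, as the main technical obstacle; with that input granted, your proof is sound and is, if anything, more complete than the paper's.
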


\begin{proof}
By Lemma \ref{hasone} any weakly converging subsequence
of $K^N(x_N,\cdot)/K^N(x_N,S)$
converges to a $\rho$-invariant probability say $\alpha$. Pick an $x$ sufficiently large so that with
$\liminf_{N\to\infty} K^N(x,\cdot)/K^N(x,S)$
 is within $\epsilon$ of $\pi_{+\infty}$. Pick $K$
such that for $N\geq K$, $x_N\geq x$ and hence by the nearest neighbour property
$K^N(x_N,\cdot)/K^N(x_N,S)$ is stochastically bigger than
$K^N(x,\cdot)/K^N(x,S)$; i.e. $\alpha$ is stochastically larger than an $\epsilon$
perturbation of $\pi_{+\infty}$. Since $\epsilon$ is arbitrary and
$\pi_{+\infty}$ is the stochastically largest quasi-stationary distribution we
have our result. The proof for $y_N\to-\infty$ is the same.
\end{proof}

\begin{proof}[Proof of Theorem \ref{main}]
We just prove the case where both $\mu_x^{\hat{h}}(+\infty)>0$ and
$\mu_x^{\hat{h}}(-\infty)>0$.
 By Lemma \ref{farout} for any $\epsilon >0$ we can pick $v$ sufficiently large and $u$ sufficiently small
 along with $M$ sufficiently large such that for $m\geq M$
 that $\tilde{K}(y,-m;\cdot,0)= K^m(y,\cdot)/K^m(y,S)$ is within $\epsilon$ of $\pi_{+\infty}$ for all $y\geq v$ and is within $\epsilon$ of $\pi_{-\infty}$ for all $y\leq u$.

 Because $X^{\hat{h}}$ is $1$-transient we can pick an $n$ such that
 \begin{eqnarray*}
 |P_x(X^{\hat{h}}_{n}\geq v)-\mu_x^{\hat{h}}(+\infty)|&<&\epsilon\\
 |P_x(X^{\hat{h}}_{n}\leq u)=\mu_x^{\hat{h}}(-\infty)|&<&\epsilon.
 \end{eqnarray*}
 where $\mu_x^{\hat{h}}(-\infty)=P_x(X^{\hat{h}}_{\infty}=-\infty)$
 and $\mu_x^{\hat{h}}(+\infty)=P_x(X^{\hat{h}}_{\infty}=+\infty)$.

 Next pick $n\geq L$ sufficiently large
 the chain $\tilde{X}$  starting from $(x,-N)$ with $N=n+m$ agrees closely with the chain $X^{\hat{h}}$
 up to time $-m$.  i.e. such that
 $P_{(x,-N)}(X^{\hat{h}}_{k}\neq \tilde{X}_{k}, 0\leq k\leq n)|\leq \epsilon$.

 Consequently
 \begin{eqnarray*}
 \lefteqn{\frac{K^N(x,\cdot)}{K^N(x,S)}
 =P_x(\tilde{X}_N=\cdot)=\sum_z P_x(\tilde{X}_n=z)\tilde{K}(z,-m:\cdot,0)}\\
 &=&\sum_{z\geq v}P_x(X^{\hat{h}}_{n}=z)\cdot\pi_{+\infty}(\cdot)
 +\sum_{z\leq u}P_x(X^{\hat{h}}_{n}=z)\cdot\pi_{-\infty}(\cdot)
 +{\cal O}(\epsilon)\\
 &=&\mu_x^{\hat{h}}(+\infty)\cdot\pi_{+\infty}(\cdot)
 +\mu_x^{\hat{h}}(-\infty)\cdot\pi_{-\infty}(\cdot)
 +{\cal O}(\epsilon)
 \end{eqnarray*}
 Since $\epsilon$ is arbitrarily small we have our result.
 \end{proof}

Under the hypotheses of Theorem \ref{main} we have a good description
of the space-time Martin entrance boundary; i.e.
$$k((x,-N);(y,t))\to \rho^t\pi_x(y)\mbox{ as } N\to\infty$$
 where $$\pi_x(y)=\mu_x^{\hat{h}}(+\infty)\cdot\pi_{+\infty}(y)
 +\mu_x^{\hat{h}}(-\infty)\cdot\pi_{-\infty}(y).$$

 By Corollary \ref{hopeful} the extremal $\rho^t\pi_{+\infty}(y)$
 is associated with the entrance point which is the limit of $(x_N,-N)$ where
 $k((x_N,-N);(y,t))\to \pi_{+\infty}(y)$ with $x_N\to\infty$ and
 $K^{N+1}(x_N,S)/K^N(x_N,S)\to \rho$. Similarly the extremal $\rho^t\pi_{-\infty}(y)$ is associated with the entrance point  which is the limit of $(y_N,-N)$ where
 $$k((y_N,-N);(y,t))\to \rho^t\pi_{-\infty}(y)$$ with $y_N\to -\infty$ and
$K^{N+1}(y_N,S)/K^N(y_N,S)\to \rho$.

For an arbitrary $\rho$-invariant probability $\alpha$ apply Orey's Theorem to  the associated
time reversal $\overleftarrow{X}$ having kernel $\overleftarrow{K}$ with $\beta=\delta_y$.
It follows that a.s. $P_{\alpha}$,
$$\frac{K^N(\overleftarrow{X}_N,y)}{K^N(\overleftarrow{X}_N,S)}
\to \alpha(y)\frac{P_{y}(\overleftarrow{X}_{\infty}=b)}
 {P_{\alpha}(\overleftarrow{X}_{\infty}=b)}$$
 on the set where $\overleftarrow{X}\to b$.
 Hence,
 $$E_{\alpha}[\frac{K^N(\overleftarrow{X}_N,y)}{K^N(\overleftarrow{X}_N,S)}]
 \to\alpha(y)[P_{y}(\overleftarrow{X}_{\infty}=-\infty)
 +P_{y}(\overleftarrow{X}_{\infty}=+\infty)] =\alpha(y).$$
However by Orey's theorem we also have
$K^{N+1}(\overleftarrow{X}_N,S)/K^N(\overleftarrow{X}_N,S)\to \rho$
so
$$\frac{K^N(\overleftarrow{X}_N,y)}{K^N(\overleftarrow{X}_N,S)}\to
P_{\alpha}(\overleftarrow{X}_{\infty}=-\infty)\pi_{-\infty}(y)
+P_{\alpha}(\overleftarrow{X}_{\infty}=+\infty)\pi_{+\infty}(y).$$
We conclude
\begin{eqnarray}
\alpha(y)&=&P_{\alpha}(\overleftarrow{X}_{\infty}=-\infty)\pi_{-\infty}(y)
+P_{\alpha}(\overleftarrow{X}_{\infty}=+\infty)\pi_{+\infty}(y).\label{reduces}
\end{eqnarray}

We can obtain the above by the representation of the constant function
$h={\bf 1}$
which is harmonic for $\overleftarrow{K}$ as given at (55) in \cite{Dynkin}.
The Martin kernel associated with $\overleftarrow{K}$ is given in Theorem 11
in \cite{Dynkin} as $\chi(x,y)/\alpha(y)$ having standard measure
$\hat{\gamma}={\bf 1}\cdot \alpha$. In this case
$\mu^h$ in (55) is given by (37) in \cite{Dynkin} as
$$\mu^h(\Gamma)=P_{\hat{\gamma}}(\overleftarrow{X}_{\infty}\in \Gamma)
=P_{\alpha}(\overleftarrow{X}_{\infty}\in \Gamma)$$
Hence the representation
$${\bf 1}=\int_{\hat{B}}\frac{\chi(x,y)}{\alpha(y)}\mu^h(dy)$$
reduces to (\ref{reduces}).

\subsection{Verify Condition [5]}
We can use  Lemma \ref{onekill} as a simple method of verifying Condition [5].
\begin{Pro}\label{simpler}
For nearest neighbour chains on the integers with killing at a single point $x_0$ satisfying Condition [2]
it suffices to check
$$\frac{K^n(x_0+1,x_0)}{K^n(x_0,x_0)}\mbox{ or } \frac{K^n(x_0-1,x_0)}{K^n(x_0,x_0)}\mbox{ converges}$$
in order for Condition [5] to hold.
\end{Pro}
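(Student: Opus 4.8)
The plan is to show that convergence of a single neighbour ratio propagates, through the first-step recurrence, to convergence of $u_n(x):=K^n(x,x_0)/K^n(x_0,x_0)$ for every $x\in\mathbb{Z}$; once this holds, Lemma \ref{onekill} gives the same convergence for $K^n(x,S)/K^n(x_0,S)$ (since $K^n(x,x_0)/K^n(x,S)\to(1-\rho)/\kappa$ for every $x$), which is the $k=0$ instance of (\ref{asymp2}). The general $k\ge 0$ then follows from $K^{n+1}(x_0,S)/K^n(x_0,S)\to\rho$ (equation (\ref{us-inter-lim}), available because Condition [2] implies Condition [A]), and Condition [5] follows from Lemma 2.3 in \cite{Jacka-Roberts}. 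Take $x_0=0$.

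The engine is the decomposition $K^{n+1}(x,0)=\sum_y K(x,y)K^n(y,0)$, which for a nearest-neighbour chain reads
\[
K^{n+1}(x,0)=p_x K^n(x+1,0)+r_x K^n(x,0)+q_x K^n(x-1,0)
\]
for every $x$ (at $x=0$ the coefficients simply sum to $1-\kappa$ rather than $1$). Dividing by $K^n(0,0)$ and setting $\lambda_n:=K^{n+1}(0,0)/K^n(0,0)$, which tends to $\rho$ by (\ref{Kesten-lim}), yields
\[
\lambda_n\,u_{n+1}(x)=p_x\,u_n(x+1)+r_x\,u_n(x)+q_x\,u_n(x-1).
\]
Here $u_n(0)=1$ for all $n$, and a convergent sequence has the same limit as its one-step shift, so $u_{n+1}(x)$ and $u_n(x)$ share any limit.

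I would then induct outward from $0$, using the strict positivity $p_x,q_x>0$ forced by irreducibility. Solving the recursion for the left neighbour gives $u_n(x-1)=q_x^{-1}\bigl(\lambda_n u_{n+1}(x)-r_x u_n(x)-p_x u_n(x+1)\bigr)$; at $x=0$ this reads $u_n(-1)=q_0^{-1}\bigl(\lambda_n-r_0-p_0 u_n(1)\bigr)$, so convergence of $u_n(1)$ forces convergence of $u_n(-1)$. This single observation is exactly what legitimises checking only one of the two neighbours, since the relation at $0$ transfers convergence from either neighbour to the other. Iterating at $x=-1,-2,\dots$ then propagates convergence to all $x<0$, and solving instead for the right neighbour, $u_n(x+1)=p_x^{-1}\bigl(\lambda_n u_{n+1}(x)-r_x u_n(x)-q_x u_n(x-1)\bigr)$, and iterating at $x=1,2,\dots$ (seeded by $u_n(0)=1$ and the hypothesis on $u_n(1)$) propagates it to all $x>0$. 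The limits satisfy $\rho\,u(x)=p_x u(x+1)+r_x u(x)+q_x u(x-1)$ for $x\neq0$, i.e. the limiting function is $\rho$-harmonic off $0$, which identifies it via Lemma 2.3 in \cite{Jacka-Roberts} with the function $\hat h$ of Condition [5].

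The whole argument reduces to linear algebra driven by $\lambda_n\to\rho$ together with $p_x,q_x>0$, so I expect no genuine obstacle along this route. The pitfall to avoid is the more obvious first-passage decomposition $K^n(x,0)=\sum_k f_{x,1}(k)K^{n-k}(1,0)$: although each summand of $K^n(x,0)/K^n(0,0)$ converges to $c_1R^k$, justifying the interchange of limit and infinite sum is delicate, since $K^n(0,0)R^n\to0$ for $R$-transient chains and the natural bound by $1/K^k(0,0)$ need not be summable against $f_{x,1}(k)R^k$. The recurrence method avoids this analytic difficulty altogether, which is why I would prefer it.
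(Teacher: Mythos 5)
Your proposal is correct and is essentially the paper's own proof: both rest on the nearest-neighbour one-step decomposition $K^{n+1}(x,x_0)=p_xK^n(x+1,x_0)+r_xK^n(x,x_0)+q_xK^n(x-1,x_0)$ divided by $K^n(x_0,x_0)$, the ratio limit (\ref{Kesten-lim}), and strict positivity of $p_x,q_x$ to propagate convergence of $K^n(y,x_0)/K^n(x_0,x_0)$ outward to all $y$, followed by Lemma \ref{onekill} and Lemma 2.3 of \cite{Jacka-Roberts} to obtain Condition [5]. The only difference is presentational: you spell out the relation at $x_0$ itself, which transfers convergence from one neighbour to the other (and thus justifies that a single neighbour suffices, including reaching the half-line on the far side of $x_0$), a step the paper compresses into ``by iteration.''
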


\begin{proof}
Suppose $K^n(x_0+1,x_0)/K^n(x_0,x_0) \to L$.
If Condition [1] holds then $K^{n+1}(x_0+1,x_0)/K^n(x_0+1,x_0)\to \rho$. But
\begin{eqnarray*}
\lefteqn{\frac{K^{n+1}(x_0+1,x_0)}{K^n(x_0,x_0)}=
K(x_0+1,x_0)\frac{K^n(x_0,x_0)}{K^n(x_0,x_0)}}\\
&+&K(x_0+1,x_0+1)\frac{K^n(x_0+1,x_0)}{K^n(x_0,x_0)}+K(x_0+1,x_0+2)\frac{K^n(x_0+2,x_0)}{K^n(x_0,x_0)}\\
&\to&K(x_0+1,x_0)+K(x_0+1,x_0+1)L\\
& &+K(x_0+1,x_0+2)
\lim_{n\to\infty}\frac{K^n(x_0+2,x_0)}{K^n(x_0,x_0)}.
\end{eqnarray*}
since the left hand side converges the limit $\lim_{n\to\infty}K^n(x_0+2,x_0)/K^n(x_0,x_0)$ must exist.
By iteration this shows $\lim_{n\to\infty}K^n(y,x_0)/K^n(x_0,x_0)$ exists for all $y$.
(\ref{atzero}) shows this limit is $ \hat{h}(y)/\hat{h}(x)$ and gives Condition [5].
\end{proof}

It may be the case that one point in the Martin boundary dominates.
In the two-sided Example \ref{twosided-asym} the following holds.
\begin{itemize}\label{oneside}
\item[Condition [8]]  Condition [5] holds and $\hat{h}=h_{+\infty}$.
\end{itemize}
This condition means that $+\infty$ is the dominant boundary point. In Theorem \ref{main} this means $\pi_x=\pi_{+\infty}$ so in these cases the Yaglom limit
does not depend on $x$.

\section{Examples}\label{stay}
\begin{Exp}\label{McDF}
We generalize the hub-and-spoke nearest neighbour chain on the integers with kernel $K$ in \cite{McFoley} by adding the probability $r$ of
staying put at every state. We get a modified kernel $K_r=rI+(1-r)K$
which satisfies the conditions of Theorem \ref{main} if $r\geq 1/2$.
\begin{Pro}
$\lim_{m\to\infty}\frac{K_r^m(x,y)}{K_r^m(x,S)}=\pi_x(y)$.
\end{Pro}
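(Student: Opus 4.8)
The plan is to deduce the Proposition directly from Theorem~\ref{main}: since $r\geq 1/2$, the lazy kernel $K_r=rI+(1-r)K$ is again an irreducible, aperiodic, nearest-neighbour chain on $\mathbb{Z}$ with killing only at $0$, and I will check that it inherits Conditions [1], [2], [3], [5], [6], [7] from $K$. Its one-step law is $K_r(x,x\pm1)=(1-r)p_x,(1-r)q_x$ and $K_r(x,x)=r+(1-r)r_x\geq r\geq 1/2$, while the killing probability at $0$ is $(1-r)\kappa$. In particular the holding probabilities are at least $1/2$, so Condition [7] holds verbatim and, taking $N=1$, $r(i)=1$, $k_1=1$, $\delta_1=1/2$, Condition [1] holds as well; Condition [6] is immediate, and Condition [3] is automatic for a nearest-neighbour chain since $P_z(\zeta>m)=1$ whenever $|z|>m$.

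For Condition [2] I will use the resolvent identity $I-zK_r=(1-zr)\bigl(I-\tfrac{z(1-r)}{1-zr}K\bigr)$, which gives
\begin{eqnarray*}
G^{r}_{x,y}(z)=\frac{1}{1-zr}\,G_{x,y}\!\left(\frac{z(1-r)}{1-zr}\right).
\end{eqnarray*}
Reading off the singularity, the radius of convergence of $K_r$ is $R_r=R/(1-r+Rr)$, so that $\rho_r=1/R_r=r+(1-r)\rho$; since $\rho<1$ (Condition [2] for $K$ gives $R>1$) this yields $R_r>1$. Summing the identity over $y$ at $z=R_r$ gives $G^{r}_{x,S}(R_r)=\frac{1}{1-R_rr}G_{x,S}(R)<\infty$ because $1-R_rr=(1-r)/(1-r+Rr)>0$ and $K$ satisfies Condition [A]; hence $K_r$ satisfies Condition [A], and via the identity $G_{z,S}(R)=(E_zR^{\zeta}-1)/(R-1)$ this is equivalent, with $R_r>1$, to Condition [2].

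The genuine work is Condition [5], which I expect to be the main obstacle. By Proposition~\ref{simpler} it suffices to prove that $K_r^n(1,0)/K_r^n(0,0)$ converges. I will obtain this by transferring the corresponding property of the base chain through the binomial subordination $K_r^n=\sum_k \binom{n}{k}(1-r)^k r^{n-k}K^k$. Writing $c_k=K^k(1,0)/K^k(0,0)$ and $W_{n,k}=\binom{n}{k}(1-r)^k r^{n-k}K^k(0,0)\geq 0$,
\begin{eqnarray*}
\frac{K_r^n(1,0)}{K_r^n(0,0)}=\frac{\sum_k W_{n,k}\,c_k}{\sum_k W_{n,k}}
\end{eqnarray*}
is a weighted average of the $c_k$. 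The base chain satisfies Condition [5] (this is exactly the calculation carried out for the example in \cite{McFoley}), so $c_k\to \hat{h}(1)/\hat{h}(0)$ and the $c_k$ are bounded. Since $\sum_k W_{n,k}=K_r^n(0,0)$ with $K_r^n(0,0)^{1/n}\to\rho_r>r$, while the mass on any fixed range $\{k\leq M\}$ is only $O(n^M r^n)$, the weights $W_{n,k}$ escape to infinity and the average converges to $\hat{h}(1)/\hat{h}(0)$. Thus $K_r^n(1,0)/K_r^n(0,0)$ converges and Condition [5] holds for $K_r$; consistently, the same $\hat{h}$ serves for both chains because $K_r\hat{h}=(r+(1-r)\rho)\hat{h}=\rho_r\hat{h}$. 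Alternatively one can argue intrinsically from the renewal identity $G^{r}_{1,0}(z)=F^{r}(z)G^{r}_{0,0}(z)$ together with the ratio limit $K_r^{n-k}(0,0)/K_r^n(0,0)\to R_r^{k}$ of Lemma~\ref{follow}, the interchange of limit and sum being licensed by $F^{r}(R_r)=G^{r}_{1,0}(R_r)/G^{r}_{0,0}(R_r)<\infty$.

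Having verified Conditions [1], [2], [3], [5], [6], [7] for $K_r$, Theorem~\ref{main} applies and delivers $K_r^N(x,y)/K_r^N(x,S)\to \mu_x^{\hat{h}}(-\infty)\pi_{-\infty}(y)+\mu_x^{\hat{h}}(+\infty)\pi_{+\infty}(y)=\pi_x(y)$, which is the assertion. Everything except Condition [5] is routine bookkeeping; the crux is the transfer of the Jacka-Roberts ratio limit under subordination, where the only point needing care is that the binomial weights concentrate on indices $k\to\infty$, so that the limiting value $\hat{h}(1)/\hat{h}(0)$ is not corrupted by the small-$k$ terms.
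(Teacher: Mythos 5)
Your strategy --- checking Conditions [1,2,3,5,6,7] for $K_r$ and then citing Theorem \ref{main} --- is a genuinely different route from the paper's. The paper never re-verifies those hypotheses for $K_r$: it proves the limit directly, by pushing the parity-refined Yaglom limits of the base chain $K$ (quoted from \cite{McFoley}) through the binomial subordination $K_r^m=\sum_n\binom{m}{n}(1-r)^n r^{m-n}K^n$, and that computation is valid for an arbitrary period-$2$ kernel and in fact for any $0<r<1$, not just $r\geq 1/2$. Within your route, the verifications of Conditions [1], [3], [6], [7], and the resolvent identity giving Condition [2] with $\rho_r=r+(1-r)\rho$ (the same identity also transfers $R_r$-transience) are correct.

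The gap is in Condition [5], exactly the step you identified as the genuine work. The hub-and-spoke chain $K$ of \cite{McFoley} has period $2$ --- this is why the paper's proof must distinguish the cases ``$y$ even, $m$ even'' and ``$y$ odd, $m$ odd'', and why the Introduction remarks that Example \ref{McDF} ``would be aperiodic if $r>0$''. Consequently $K^k(1,0)>0$ only for odd $k$, while $K^k(0,0)>0$ only for even $k$: your sequence $c_k=K^k(1,0)/K^k(0,0)$ equals $0$ whenever it is defined and has zero denominator whenever its numerator is positive, so it does not converge to $\hat{h}(1)/\hat{h}(0)$; in fact the periodic base chain cannot satisfy Condition [5] at all, since $P_{x_0}(\zeta>n-1)/P_{x_0}(\zeta>n)$ oscillates with the parity of $n$ rather than tending to $R$. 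To repair your weighted-average argument you must pair the odd-$k$ terms of the numerator against the even-$k$ terms of the denominator, using a cross-parity ratio limit such as $K^{2j+1}(1,0)/K^{2j}(0,0)$ supplied by \cite{McFoley}, together with the fact that adjacent binomial weights are asymptotically equal near the center $k\approx(1-r)n$, namely $\frac{1-r}{r}\cdot\frac{n-2j}{2j+1}\to 1$; once this is written out you have reconstructed, term for term, the computation the paper actually gives. Your fallback renewal argument does not close the gap either: finiteness of $F^{r}(R_r)$ does not license interchanging limit and sum in $\sum_k f^{r}_{1,0}(k)\,K_r^{n-k}(0,0)/K_r^{n}(0,0)$, because Lemma \ref{follow} provides only a pointwise ratio limit, with no uniform-in-$n$ dominating bound of the form $CR_r^{k}$ that is summable against $f^{r}_{1,0}(k)$.
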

The proof below is valid for an arbitrary kernel $K$ of period $2$.
It could be extended to arbitrary period $d$.
\begin{proof}
Suppose $x$ is even. By \cite{McFoley}, for any $\epsilon$ there exists an $L$ such that
$m>L$ implies $$|\frac{K^{m}(x,y)}{K^{m}(x,S)}-\frac{\pi_x(y)}{\pi_x(2 \ZZ)}|\leq \epsilon$$ for $y$ even and $m$ even
and
$$|\frac{K^{m}(x,y)}{K^{m}(x,S)}-\frac{\pi_x(y)}{\pi_x(2 \ZZ + 1)}|\leq \epsilon$$
for $y$ odd and $m$ odd
and
$$\frac{K^{m+1}(x,S)}{K^{m}(x,S)}-\rho|\leq \epsilon.$$

Let $B(n,m)=\frac{m!}{n!(m-n)!}(1-r)^nr^{m-n}$.
Pick $M>L$  sufficiently large so that
$\sum_{n:|n-(1-r)m|>\delta m} B(n,m)<\epsilon$ for all $m\geq M$
where $0<\delta <1-r$ and $\delta< r$.

Note that for $n$ such that $|n-(1-r)m|\leq\delta m$,
$1-n/m\leq r+\delta$ so
$$\frac{m-n}{n+1}\leq \frac{1-n/m}{n/m}\leq \frac{r+\delta}{1-r-\delta}$$
and $1-n/m\geq r-\delta$ so
$$ \frac{m-n}{n+1}\geq\frac{1-n/m}{n/m+1/m}\geq \frac{r-\delta}{1-r +\delta+1/m}.$$
Picking $\delta$ small and $m>\tilde{M}$ where $\tilde{M}>M$ is sufficiently
large we have for $n$ satisfying $|n-(1-r)m|\leq\delta m$ that
$(m-n)/(n+1)=r/(1-r)+{\cal O}(\epsilon)$.

Suppose $y$ is even then for  $m>M$,
\begin{eqnarray*}
K_r^m(x,y)&=&\sum_{n=0}^m B(n,m)K^n(x,y)\\
&=&\sum_{n=0}^{N} B(n,m)K^n(x,y)
+\sum_{2k>N}^m B(2k,m)K^{2k}(x,y)\\
&=&\sum_{2k>N}^m B(2k,m)K^{2k}(x,y)+{\cal O}(\epsilon)\\
&=&\sum_{2k>N}^m B(2k,m) K^{2k}(x,S)\frac{\pi_x(y)}{\pi_x(2 \ZZ)}+{\cal O}(\epsilon)\\
&=&\frac{\pi_x(y)}{\pi_x(2 \ZZ)}\sum_{2k\geq 0}^m B(2k,m) K^{2k}(x,S)+{\cal O}(\epsilon).
\end{eqnarray*}

Moreover, letting $B=\sum_{2k\geq 0}^m B(2k,m)K^{2k}(x,S)$,
\begin{eqnarray*}
\lefteqn{K_r^m(x,S)=\sum_{2k\geq 0}^m B(2k,m)K^{2k}(x,S)+\sum_{2k+1\geq 0}^m B(2k+1,m)K^{2k+1}(x,S)}\\
&=&B+\sum_{k:|2k+1-(1-r)m|\leq \delta m} B(2k+1,m)K^{2k+1}(x,S)+{\cal O}(\epsilon)\\
&=&B+\sum_{k:|2k+1-(1-r)m|\leq \delta m}  B(2k+1,m)\rho K^{2k}(x,S)+{\cal O}(\epsilon)\\
&=&B+\sum_{k:|2k+1-(1-r)m|\leq \delta m}
 \rho \frac{(m-2k)}{2k+1}\frac{1-r}{r}B(2k,m) K^{2k}(x,S)+{\cal O}(\epsilon)\\
&=&B(1+\rho\frac{1-r}{r}\frac{r}{1-r})+{\cal O}(\epsilon)\mbox{ since }\frac{m-2k}{2k+1}=\frac{r}{1-r}+{\cal O}(\epsilon)\\
&=&(1+\rho)\sum_{2k\geq 0}^m (B(2k,m)K^{2k}(x,S)+{\cal O}(\epsilon)
\end{eqnarray*}

Using the fact that $\pi_x(2\ZZ)=1/(1+\rho)$ given in Theorem 1 in \cite{McFoley} and the fact the $\epsilon$ is arbitrarily small we conclude
\begin{eqnarray*}
\frac{K_r^m(x,y)}{K_r^m(x,S)}&=&\frac{1}{1+\rho}\frac{\pi_x(y)}{\pi_x(2 \ZZ)}=\pi_x(y).
\end{eqnarray*}

The cases when $y$ is odd or $x$ is odd follow in the same way. To
extend to a $d$ periodic case where $K$ causes transitions  through a sequence of subsets $S_0,S_1,\ldots S_{d-1},S_d=S_0$, assume $x$ is in class $S_0$ then $\pi_z(S_k)=c_k$ given by (17) in Theorem 1 in \cite{McFoley}.
\end{proof}
\end{Exp}

\begin{Exp}\label{twosided-asym}
Consider a nearest neighbour random walk on the integers with transitions
where, for $x>0$,
\begin{eqnarray*}
& K(x,x+1)=p,K(x,x-1)=q, \\
&K(-x,-x+1)=a,K(-x,-x-1)=b
\end{eqnarray*}
and  $K(0,1)=p$, $K(0,-1)=b$. We assume $p+q=1$, $p<q$ and $a+b=1$, $b<a$;
i.e. there is only killing at $0$ so Condition [6] holds. We also assume $\rho=2\sqrt{pq}>2\sqrt{ab}$
which implies $b<p<1/2$.

The $z$-transform of the recurrence time to $0$ for the $K$ kernel is
\begin{eqnarray*}
F_{0,0}(z)&=&zpF_{1,0}(R)+zb F_{-1,0}(R)\\
&=&zp\frac{(1-\sqrt{1-4pqz^2})}{2zp}+zb\frac{(1-\sqrt{1-4abz^2})}{2zb}
\end{eqnarray*}
as in \cite{Vere-Jones-Seneta}. Since $F_{0,0}(z)$ becomes singular at $z=R=(2\sqrt{pq})^{-1}$ and takes the
value
$V=1/2+(1-\sqrt{1-ab/pq})/2<1$ there we conclude the spectral radius of $K$ is $\rho_0=2\sqrt{pq}$
and $K$ is $R$-transient.

Starting from $0$, the chain my die in one step or can return to zero to try again.
Hence
$E_0 z^{\zeta}=(1-b-p)z+F_{0,0}(z)E_0 z^{\zeta}$ so
$$E_0 R^{\zeta}=\frac{(1-b-p)R}{1-V}<\infty.$$
Hence Condition [2] holds at $z=R$. Since $F_{1,0}(R)<\infty$ and $F_{-1,0}(R)<\infty$
Condition [2] holds for all $z$.

Condition [3] holds automatically for nearest neighbour random walks.

Consider $f(s)=bs^2-2\sqrt{pq} s+a=0$.
The roots are $t_0=\sqrt{pq}(1-\sqrt{1-ab/pq})/b$ and $t_1=\sqrt{pq}(1+\sqrt{1-ab/pq})/b$. Both roots are real since $ab<pq$. The mid point between the roots is $\sqrt{pq}/b>\sqrt{pq/ab}>1$. Since $f(0)=a$, $f'(0)<0$ and $f(1)=1-2\sqrt{pq}>0$ it follows that
both roots  are greater than one.
Notice that for either root $t$: $at^{-1}+bt=(a+bt^2)/t=2\sqrt{pq}=\rho$.
We want to find $\rho$-invariant measures for $K$ so
define
\begin{eqnarray*}
\mu(x) = \begin{cases}
(1+cx)\sqrt{\frac{p}{q}}^{x} \mbox{ where $c\geq 0$}&\mbox{if } x > 0 \\
d_0t_0^{x}+d_1 t_1^x \mbox{ where $d_0\geq 0$ and $d_0+d_1=1$} & \mbox{if } x <0\\
1& \mbox{if } x =0.
\end{cases}
\end{eqnarray*}
For $\mu$ to be positive we do not require $d_1\geq 0$.
To renormalize $\mu$ into a probability  $\pi=\mu/T(c)$ where $T(c)$ is the sum of all the mass:
$$T(c)=\frac{1}{1-\sqrt{p/q}}+c\sqrt{p/q}\frac{1}{(1-\sqrt{p/q})^2}+\frac{d_0 t_0^{-1}}{1-t_0^{-1}}+\frac{d_1 t_1^{-1}}{1-t_1^{-1}}.$$

Remark that for $x>0$
\begin{eqnarray*}
\lefteqn{\mu(x-1)p+\mu(x+1)q}\\
&=&(p\sqrt{\frac{p}{q}}^{x-1}+q\sqrt{\frac{p}{q}}^{x+1})+c(p(x-1)\sqrt{\frac{p}{q}}^{x-1}+q(x+1)\sqrt{\frac{p}{q}}^{x+1})\\
&=&2\sqrt{pq}(\sqrt{\frac{p}{q}}^{x}+cx\sqrt{\frac{p}{q}}^{x})\\
&=&\rho\mu(x).
\end{eqnarray*}
For $x<0$,
\begin{eqnarray*}
\mu(x-1)a+\mu(x+1)b&=&a(d_0t_0^{x-1}+d_1 t_1^{x-1} )+b(d_0t_0^{x+1}+d_1 t_1^{x+1}) \\
&=&d_0t_0^x(at_0^{-1}+bt_0)+d_1t_1^x(at_1^{-1}+bt_1)\\
&=&\rho\mu(x).
\end{eqnarray*}

In order to be $\rho$-invariant for  $x=0$ we need
\begin{eqnarray*}
2\sqrt{pq}\cdot 1&=&\mu(-1)a+\mu(1)q\\
&=&a(d_0t_0^{-1}+d_1 t_1^{-1} )+q(1+c)\sqrt{\frac{p}{q}} .
\end{eqnarray*}
Taking $d_1=1-d_0$ and solving $d_0$ in terms of $c$ we get
$$d_0=\frac{(1-c)\sqrt{pq}/a-t_1^{-1}}{t_0^{-1}-t_1^{-1}}.$$

The requirement $d_0\geq 0$ implies $c\leq 1-at_1^{-1}/\sqrt{pq}$.
The requirement $d_0\leq 1$ implies $c\geq 1-at_0^{-1}/\sqrt{pq}$.
Hence $1-at_0^{-1}/\sqrt{pq}\leq c\leq 1-at_1^{-1}/\sqrt{pq}$.

The upper bound is positive since
\begin{eqnarray}\label{pos}
1-\frac{at_1^{-1}}{\sqrt{pq}}&=&1-\frac{ab}{pq(1+\sqrt{1-ab/pq})}>0.
\end{eqnarray}
On the other hand the lower bound is negative since
\begin{eqnarray}\label{neg}
1-\frac{at_0^{-1}}{\sqrt{pq}}&=&1-\frac{ab}{pq}\frac{1}{1-\sqrt{1-ab/pq}}\\
&=&1-\frac{v}{1-\sqrt{1-v}}=1-\frac{v(1+\sqrt{1-v})}{1-(1-v)}=-\sqrt{1-v}
\end{eqnarray}
where $v=ab/pq<1$ so the above is negative.
However $c$ must be nonnegative or else $\mu$ is not a positive measure so
$0\leq c\leq 1-at_1^{-1}/\sqrt{pq}$.

One extremal of the family of possible $\mu$'s are given by $d_0=0$ with
$$c=c_1=1-\frac{at_1^{-1}}{\sqrt{pq}}
=1-\frac{ab}{pq(1+\sqrt{1-ab/pq})}=\sqrt{1-ab/pq}$$
 corresponding to the measure
  \begin{eqnarray*}
\mu_{+\infty}(x) = \begin{cases}
(1+c_1x)\sqrt{\frac{p}{q}}^{x} &\mbox{if } x > 0 \\
 t_1^x  & \mbox{if } x <0\\
1& \mbox{if } x =0.
\end{cases}
\end{eqnarray*}
The other extremal is given by $c=c_0=0$ and
$d_0=\frac{\sqrt{pq}-at_1^{-1}}{a(t_0^{-1}-t_1^{-1})}$. The later is positive because
\begin{eqnarray*}
\sqrt{pq}-at_1^{-1}&=&\sqrt{pq}-\frac{ab}{\sqrt{pq}(1+\sqrt{1-ab/pq})}\\
&=&\sqrt{pq}(1-\frac{v}{(1+\sqrt{1-v})}=\sqrt{pq}\sqrt{1-v}>0
\end{eqnarray*}
where $v=ab/pq$. The corresponding measure is
 \begin{eqnarray*}
\mu_{-\infty}(x) = \begin{cases}
\sqrt{\frac{p}{q}}^{x} &\mbox{if } x > 0 \\
d_0t_0^{x}+(1-d_0) t_1^x  & \mbox{if } x <0\\
1& \mbox{if } x =0.
\end{cases}
\end{eqnarray*}

 The time reversal with respect to $\mu_{-\infty}$ where $d_0>0$ is given   by
\begin{eqnarray*}
\overleftarrow{K}(x,x+1)&=&\frac{\mu_{-\infty}(x+1)}{\rho\mu_{-\infty}(x)}K(x+1,x)\mbox{ for } x\leq -1\\
&\sim & \frac{t_0b}{2\sqrt{pq}} \mbox{ for large $|x|$}
\end{eqnarray*}
Also,
\begin{eqnarray*}
\overleftarrow{K}(x,x-1)&=&\frac{\mu_{-\infty}(x-1)}{\rho\mu_{-\infty}(x)}K(x-1,x)\mbox{ for } x\leq -1\\
&\sim & \frac{t_0^{-1}a}{2\sqrt{pq}} \mbox{ for large $|x|$}
\end{eqnarray*}
Note that
\begin{eqnarray*}
\frac{t_0b}{2\sqrt{pq}}-\frac{t_0^{-1}a}{2\sqrt{pq}}&=&\frac{1}{2t_0\sqrt{pq}}(bt_0^2-a)\\
&=&\frac{1}{2t_0\sqrt{pq}}(2t_0\sqrt{pq}-2a)=1-\frac{at_0^{-1}}{\sqrt{pq}}<0\mbox{ by (\ref{neg}).}
\end{eqnarray*}
We conclude the time reversal of $K$  drifts to $-\infty$ from $x$ sufficiently negative.

For $x\geq 1$,
\begin{eqnarray*}
\overleftarrow{K}(x,x+1)&=&\frac{\mu_{-\infty}(x+1)}{\rho\mu_{-\infty}(x)}K(x+1,x)=\frac{1}{2}
\end{eqnarray*}
and
\begin{eqnarray*}
\overleftarrow{K}(x,x-1)&=&\frac{\mu(x-1)}{\rho\mu(x)}K(x-1,-x)
= \frac{1}{2}.
\end{eqnarray*}
This means that the above time reversed kernel is null recurrent on the positive integers so the time reversal  of $K$ with respect to $\mu_{-\infty}$ is transient and drifts to $-\infty$

On the other hand the time reversal with respect to $\mu_{+\infty}$ when $b_0=0$ is given  (for $x\leq -1$) by
\begin{eqnarray*}
\overleftarrow{K}(x,x+1)&=&\frac{\mu_{+\infty}(x+1)}{\rho\mu_{+\infty}(x)}K(x+1,x)\mbox{ for } x\leq -1\\
&\sim & \frac{t_1b}{2\sqrt{pq}}
\end{eqnarray*}
Also,
\begin{eqnarray*}
\overleftarrow{K}(x,x-1)&=&\frac{\mu_{+\infty}(x-1)}{\rho\mu_{+\infty}(x)}K(x-1,x)\mbox{ for } x\leq -1\\
&\sim & \frac{t_1^{-1}a}{2\sqrt{pq}}
\end{eqnarray*}
Note that
\begin{eqnarray*}
\frac{t_1b}{2\sqrt{pq}}-\frac{t_1^{-1}a}{2\sqrt{pq}}&=&\frac{1}{2t_1\sqrt{pq}}(bt_1^2-a)\\
&=&\frac{1}{2t_1\sqrt{pq}}(2t_1\sqrt{pq}-2a)=1-\frac{at_1^{-1}}{\sqrt{pq}}>0\mbox{ by (\ref{pos}).}
\end{eqnarray*}

Moreover,
\begin{eqnarray}
\overleftarrow{K}(x,x+1)&=&\frac{\mu_{+\infty}(x+1)}{\rho\mu_{+\infty}(x)}K(x+1,x)\mbox{ for } x\geq 1\nonumber\\
&\sim & \frac{1}{2}\frac{(1+c_1(x+1))}{1+c_1x}\label{totheright}.
\end{eqnarray}
Also,
\begin{eqnarray}
\overleftarrow{K}(x,x-1)&=&\frac{\mu_{+\infty}(x-1)}{\rho\mu_{+\infty}(x)}K(x-1,x)\mbox{ for } x\geq 1\nonumber\\
&\sim & \frac{1}{2}\frac{(1+c_1(x-1))}{1+c_1x}\label{totheleft}.
\end{eqnarray}
A random walk with this kernel reflected at $0$ is transient to $+\infty$ and since the reversed chain
drifts toward $0$ on the negative integers we conclude the time reversal is transient to $+\infty$.

The kernel $K$ has period 2 and it is convenient to look at the even chain on the even integers; i.e. every two steps
 as we did in \cite{McFoley}.
Condition [1] holds since $K^2(0,0)=ba+pq>0$, $K^2(x,x)=2ba$ for $x<0$
and $K^2(x,x)=2pq$ for $x>0$.
Since the even chain is nearest neighbour (on the even integers) Condition  [4] holds automatically.

Finally as remarked in Proposition \ref{simpler}
since absorption only occurs in state $0$ we can check Condition [5]
if
$K^{2n}(x,0)/K^{2n}(0,0)$ converges to a limit if $x$ is even.
We prove this limit exists by brute force.

Note that $K^{2n}(0,0)$ is the coefficient of $z^{2n}$ in the generating function $G_{0,0}(z)$ and
\begin{eqnarray*}
G_{0,0}(z)&=&\frac{1}{1-F_{0,0}(z)}\\
&=&\frac{\sqrt{1-4abz^2}-\sqrt{1-4pqz^2}}{2(pq-ab)z^2}.
\end{eqnarray*}
So,  asymptotically in $n$, $K^{2n}(0,0)$  is the coefficient of order $z^{2n+2}$ in $-\sqrt{1-4pqz^2}$
divided by $2(pq-ab)$.
Next recall
$$\sqrt{1+x}=\sum_{n=0}^{\infty}\left( \begin{array}{c} 1/2\\n\end{array}\right)x^n$$
where $$\left(\begin{array}{c}1/2\\n\end{array}\right)=(-1)^{n-1}\frac{(2n-3)!}{2^{2n-2}n!(n-2)!}\sim
-(-1)^n\frac{1}{2\sqrt{\pi} n^{3/2}}$$
by Stiring's formula.
Substituting $x=-4pqz^2$, we have that the coefficient of $z^{2n+2}$ in $-\sqrt{1-4pqz^2}$ is asymptotically
$$(-1)(-1)(-1)^{n+1}(-1)^{n+1}\frac{1}{2\sqrt{\pi} (n+1)^{3/2}}(4pq)^{n+1}\sim
 \frac{2pq}{\sqrt{\pi}}\frac{(4pq)^n}{n^{3/2}}.$$
We conclude
$$K^{2n}(0,0)\sim \frac{pq}{ pq-ab}\frac{1}{\sqrt{\pi}}\frac{(4pq)^n}{n^{3/2}}$$
which is the same as (35) in \cite{Vere-Jones-Seneta}  at $i=j=1$.

On the other hand, for $x$ even, $K^{2n}(-x,0)$ is the coefficient $z^{2n}$ in the generating function
\begin{eqnarray}
F_{-x,0}(z)G_{0,0}(z)&=&\left(\frac{1-\sqrt{1-4abz^2}}{2zb}\right)^x
\frac{\sqrt{1-4abz^2}-\sqrt{1-4pqz^2}}{2(pq-ab)z^2}\label{goodone}
\end{eqnarray}
First consider the coefficient of $z^{2n+2+x}$ in
\begin{eqnarray}
\lefteqn{-\left(\frac{1-\sqrt{1-4abz^2}}{2b}\right)^x\cdot\frac{\sqrt{1-4pqz^2}}{2(pq-ab)}}\label{firstone}\\
&=&-\left(\frac{1-\sqrt{1-\frac{ab}{pq}w}}{2b}\right)^x\cdot\frac{1}{2(pq-ab)}(1-w)^{1/2}\label{thisone}\\
&=&C(1-w)^{1/2}+{\cal O}|1-w|^{1}\nonumber
\end{eqnarray}
$$\mbox{where }w=4pqz^2 \mbox{ and }C=-\left(\frac{1-\sqrt{1-\frac{ab}{pq}}}{2b}\right)^x\frac{1}{2(pq-ab)}.$$
This is true since $pq>ab$ so
\begin{eqnarray*}
\lefteqn{\frac{1-\sqrt{1-4\frac{ab}{pq}w}}{2b}\cdot\frac{1}{2(pq-ab)}-C}\\
&=&{\cal O}|1-w|^{1}
\end{eqnarray*}
in a neighbourhood of $w=1$.

Using Darboux's method based on the Riemann-Lebesgue Lemma as in \cite{woess} 16.8 it follows that
 the term in $w^n$ in (\ref{thisone}) is  asymptotically the same as  the term in $w^n$ in
 $C(1-w)^{1/2}$; i.e.
 $$-C(-1)^n\frac{1}{2\sqrt{\pi} n^{3/2}}(-1)^nw^n=
 -C\frac{1}{2\sqrt{\pi} n^{3/2}}(4pq)^n z^{2n}.$$
 It follows that the coefficient of $z^{2n+2+x}$ in (\ref{firstone}) and hence of $z^{2n}$ in the dominant term in
(\ref{goodone}) is
 $$-C\frac{1}{2\sqrt{\pi} (n+1+x/2)^{3/2}}(4pq)^{n+1+x/2}.$$

The coefficient of $z^{2n+2+x}$ in
$$\left(\frac{1-\sqrt{1-4abz^2}}{2bz}\right)^x\cdot\frac{\sqrt{1-4abz^2}}{2(pq-ab)z^2}$$ is clearly given by
 (35) in \cite{Vere-Jones-Seneta}  at $i=2,j=1$ and hence is of lower order $(4ab)^n/n^{3/2}$.

 We conclude that for $x$ even
 \begin{eqnarray*}
 \frac{K^{2n}(-x,0)}{K^{2n}(0,0)}
 &\sim& -C\frac{1}{2\sqrt{\pi} n^{3/2}}(4pq)^{n+1+x/2}
 \frac{ pq-ab}{pq}\sqrt{\pi}(4pq)^{-n}n^{3/2}\\
 &=&(4pq)^{x/2}\left(\frac{1-\sqrt{1-\frac{ab}{pq}}}{2b}\right)^x=t_0^x.
 \end{eqnarray*}

 Hence for $x<0$, $\hat{h}(x)=t_0^{-x}$ and we
note that $\hat{h}(x)$ for $x\leq 0 $ is $\rho$-harmonic and can be extended $\rho$-harmonically
 to $\hat{h}(x)=(1+cx)\sqrt{q/p}^x$ for $x\geq 0 $ provided
 $\rho \hat{h}(0)=b \hat{h}(-1)+p\hat{h}(1)$; i.e. $\rho=bt_0+(1+c)\sqrt{pq}$ or
 $$c=\frac{\rho-b t_0}{\sqrt{pq}}-1=2-(1-\sqrt{1-\frac{ab}{pq}})-1=\sqrt{1-\frac{ab}{pq}}>0.$$

$\hat{h}$ is precisely $h_{+\infty}$ which we can calculate from
$h_{+\infty}(x)=\mu_{+\infty}(x)/\gamma(x)$ where the reversibility measure $\gamma(x)=(q/p)^x$ for $x\geq 0$ and
$\gamma(x)=(b/a)^{|x|}$ for $x<0$. For $x<0$ this gives
$h_{+\infty}(x)=t_1^x (b/a)^x=t_0^{-x}=\hat{h}(x)$.
We conclude Condition [7] holds.

If we restrict to even integers and even times we can check Conditions [1-7] so a Yaglom limit holds.
The extension to the odds follows from the periodic analysis in \cite{McFoley}.
\end{Exp}

\begin{Exp}[Kesten's example]
Kesten \cite{Kesten} considers a chain on the integers much like the example
in \cite{McFoley} but with probability $r_x$ of staying put at each $x$.
Kesten picks the $r_x$ so that the Yaglom limit $K^n(0,\cdot)/K^n(0,S)$ fails.
As already noted at (\ref{atzero})
 the Jacka-Roberts property must fail as well.

The cone of $\rho$-invariant probabilities cannot be empty by Proposition \ref{extremes} and there exist two extremals $\pi_{+\infty}$ and $\pi_{-\infty}$ associated with points $+\infty$ and $\-\infty$ in the $\rho$-Martin entrance
boundary.

Now consider the associated space-time process. By Proposition \ref{produceit}
we can characterize the space-time entrance boundary as limits of sequences
$(x_i,-n_i)$ converging in the Martin topology satisfying
(\ref{oreysequence}) as points associated with product measures; i.e. of the form
$\rho^{t}\alpha(y)$ where $\alpha$ is a $\rho$-invariant probability.
By uniform aperiodicity the extremal invariant measures are
of product form: $\rho^{t}\pi_{+\infty}(y)$ and $\rho^{t}\pi_{-\infty}(y)$.  The associated Orey paths $\overleftarrow{X}^{\pi_{+\infty}}$ and
$\overleftarrow{X}^{\pi_{-\infty}}$ converge respectively to the points in the space-time Martin entrance boundary associated with these two extremals. It is not known if an analogue of
Corollary \ref{hopeful} is true.

The Jacka-Roberts Condition fails but in some sense Kesten constructs a sequence
of Jacka-Roberts limits. Kesten defines
 the $r_x$ to be constant on certain
intervals. More specifically, he chooses integers
$ a_1 = 1 < a_2 < a_3\cdots$ , $b_1 = 1 < b_2 <\cdots$
and numbers $c_k, d_k \in [1/4 ,1/2 ]$ and then takes
$$ r_x = c_k \mbox{ for }a_k\leq x < a_{k+1}\mbox{ and }
r_x=d_k \mbox{ for }-b_{k+l}<x \leq -b_k, k \geq 1.$$
The $c_k$ and $d_k$ are chosen to satisfy
$ r_0<  d_1 < c_1 < d_2 < c_2 < \cdots $
and $a_k \leq b_k,  k \geq 1$. Over the  interval $a_k\leq x <b_k\leq a_{k+1}$, $r_x=c_k$
while for $ -b_{k}<x \leq -a_k\leq -b_{k-1}$, $r_x=d_{k-1}<c_k$.

The interval $(-b_k,b_k)$
was chosen to dwarf the interval $(-a_k,a_k)$ so for transitions inside
$(-b_k,b_k)$ the waiting time $r_x=c_k$ in the interval $a_k\leq x<b_k$ dominates and the chain behaves like
a chain with $r_x=c_k$ for $a_k\leq x <\infty$ and
$r_x=d_{k-1}$ for $-\infty<x\leq -b_{k-1}$. This two-sided chain will behave
like Example \ref{twosided-asym}.  It will satisfy a Jacka-Roberts condition
with a limit we can denote by $\hat{h}_k$ which will be equal to the extremal
$\hat{h}_k^{+\infty}$; i.e. the $\hat{h}_k$-transform will diverge to $+\infty$.
Hence the Yaglom ratios of this two-sided chain will tend to the extremal $\pi^k_{+\infty}$ where $\pi^k_{+\infty}\to \pi_{+\infty}$ as $k\to\infty$.

Now consider the next even more gigantic interval $(-a_{k+1},a_{k+1})$
where $r_x=d_k$ on the interval $-b_{k+1}\leq -a_{k+1}<x\leq -b_k$ which dominates over the interval $(-a_{k+1},a_{k+1})$.
As above for transitions inside $(-a_{k+1},a_{k+1})$ the Yaglom ratios
will be close to $\pi^k_{-\infty}$ where $\pi^k_{-\infty}\to \pi_{-\infty}$ as $k\to\infty$. Kesten constructs the ever increasing
intervals where the Yaglom ratios swap back and and forth
between being close to $\pi_{+\infty}$ and $\pi_{-\infty}$.
This way the Yaglom limit fails.

\end{Exp}

\section*{Acknowledgements}
DMcD thanks Servet Martinez  for his hospitality  at the Center for Mathematical Modeling at the University of Chile
and Jaime San Martin for his mathematical help.

\bibliographystyle{apt}

\noindent   Robert D. Foley\\
   Industrial and Systems Engineering \\
   Georgia Institute of Technology\\
   Atlanta, GA 30332-0205 \\
   USA
  rfoley@gatech.edu
  
\vspace{.5cm}

\noindent David R. McDonald\\
   Department of Mathematics and Statistics\\
   The University of Ottawa\\
   Ottawa, Ontario\\
   Canada K1N6N5
 dmdsg@uottawa.ca
%

\end{document}